\documentclass[12pt]{article}
\usepackage{amssymb,amsfonts,amsmath,amsthm,amsrefs,dsfont,hyperref}
\usepackage[letterpaper, portrait, margin=29mm]{geometry}

\newcommand{\1}{\mathds{1}}
\newcommand{\0}{\mathds{O}}

\newcommand{\Z}{\mathbb{Z}}
\newcommand{\Q}{\mathbb{Q}}
\newcommand{\R}{\mathbb{R}}

\newcommand{\N}{\mathbb{N}}

\newcommand{\Bbo}{\overline{\mathrm{B}}}
\newcommand{\8}{\infty}

\newcommand{\Ker}{\mathrm{Ker~}}

\newcommand{\Co}{\mathcal{C}}
\newcommand{\Fo}{\mathcal{F}}

\newcommand{\Lo}{\mathcal{L}}

\newcommand{\Int}{\mathrm{int}}

\newcounter{dummy} \numberwithin{dummy}{section}
\newtheorem{theorem}[dummy]{Theorem}
\newtheorem{lemma}[dummy]{Lemma}

\newtheorem{proposition}[dummy]{Proposition}
\newtheorem{corollary}[dummy]{Corollary}
\newtheorem{question}[dummy]{Question}
\theoremstyle{remark}
\newtheorem{remark}[dummy]{Remark}
\newtheorem{example}[dummy]{Example}

\begin{document}

\title{Order continuity and regularity on vector lattices and on lattices of continuous functions}
\author{Eugene Bilokopytov\footnote{Email address bilokopy@ualberta.ca, erz888@gmail.com.}}
\maketitle

\begin{abstract}
We give several characterizations of order continuous vector lattice homomorphisms between Archimedean vector lattices. We reduce the proofs of some of the equivalences to the case of composition operators between vector lattices of continuous functions, and we obtain a characterization of order continuity of such operators. Motivated by this, we investigate various properties of the sublattices of the space $\Co\left(X\right)$, where $X$ is a Tychonoff topological space. We also obtain several characterizations of a regular sublattice of a vector lattice, and show that the closure of a regular sublattice of a Banach lattice is also regular.\medskip

\emph{Keywords:} Vector lattices, Regular sublattices, Order continuity, Continuous functions, Composition operators;

MSC2020 46A40, 46B42, 46E25, 47B33, 47B60, 54C10.
\end{abstract}

\section{Introduction}

One of the important properties of linear operators between vector lattices is order continuity, i.e. continuity with respect to the order convergence. In this article we characterize vector lattice homomorphisms with this property. A concept which is closely related to the order continuity of homomorphisms is regularity of sublattices, i.e. the property that supremum of an arbitrary set agrees with the supremum calculated with respect to the ambient lattice. Consequently, we obtain several characterizations of such sublattices.

The question of when a homomorphism is order continuous seems to have not been thoroughly investigated. There is a characterization of surjective order continuous homomorphisms (see \cite[Theorem 18.13]{zl}), as well as a characterization of vector lattices on which all homomorphisms are order continuous (see \cite{fremlin,tucker}). We adjust the former characterization to the case when the  homomorphism is not necessarily a surjection, and add some new equivalent conditions (see Theorem \ref{hoc}). Our proof of some of the equivalences reduces the abstract case to the case of a composition operator between lattices of continuous functions. This motivates us to characterize order continuity of a composition operator (see Theorem \ref{com}). We take this as an opportunity to study such lattices in their own right. In fact, we add to the growing literature that deals with the vector lattice properties of the spaces $\Co\left(X\right)$, where $X$ is not necessarily compact (see e.g. \cite{djvdw,hager,kv2,kv1,vdw}). Parallel to the study of composition operators we also consider sublattices of $\Co\left(X\right)$, and in particular characterize their regularity.

In Section \ref{p} we recall the basic concepts from the vector lattice theory, in particular various notions of subspaces of a vector lattice. Section \ref{d} is devoted to order continuity of a vector lattice homomorphism. We show that a continuous homomorphism between Banach lattices is order continuous provided that its restriction to a dense regular sublattice is order continuous (Theorem \ref{choc}). In this section we also state and partially prove our main theorem. As a consequence we obtain characterizations of regular sublattices (Corollary \ref{reg}) and show that a closure of a regular sublattice of a Banach lattice is regular (Corollary \ref{nreg}).

In Section \ref{x} we gather some standard facts about $\Co\left(X\right)$ and present them in the form adapted for the needs of this investigation. Section \ref{s} is dedicated to sublattices of $\Co\left(X\right)$, where in particular we show that if $X$ is locally compact, then a closure of a regular sublattice of $\Co\left(X\right)$ is regular (Theorem \ref{sublo}). In Section \ref{m} we discuss various classes of continuous maps that will be essential in Section \ref{c}, which is dedicated to order continuity of the composition operators. More specifically, Theorem \ref{com} relates some vector lattice properties of a composition operator (including order continuity) with topological properties of its symbol. This enables us to complete the proof of Theorem \ref{hoc}, as well as produce examples (examples \ref{inte}, and \ref{intero}) of closed regular (order dense) sublattices of a Banach lattice whose intersection is not regular (order dense).

\section{Preliminaries}\label{p}

We start with recalling some concepts from the vector lattice theory. Let $F$ be a vector lattice. Two elements $e,f\in F$ are called \emph{disjoint} (denoted by $e\bot f$), if $\left|e\right|\wedge \left|f\right|=0_{F}$. If $G\subset F$, then its \emph{disjoint complement} $G^{d}=\left\{f\in F,~ \forall g\in G:~ f\bot g\right\}$. It is clear that $G\cap G^{d}\subset \left\{0_{F}\right\}$. A net $\left\{f_{\gamma}\right\}_{\gamma\in \Gamma}$ \emph{order converges} to $f\in F$ (we denote it by $f_{\gamma}\xrightarrow[\gamma\in \Gamma]{o}f$) if there is a net $\left\{g_{\beta}\right\}_{\beta\in B }$ such that $g_{\beta}\downarrow 0_{F}$ (this means that $\left\{g_{\beta}\right\}_{\beta\in B}$ is decreasing and its infimum is $0_{F}$) and for every $\beta\in B$ there is $\gamma\in \Gamma$ such that $\left|f-f_{\alpha}\right|\le g_{\beta}$, as soon as $\alpha\ge \gamma$. It is easy to see that an increasing net order converges to its supremum once the latter exists. Additionally, we say that $\left\{f_{\gamma}\right\}_{\gamma\in \Gamma}$ \emph{unboundedly order (uo) converges} to $f\in F$ ($f_{\gamma}\xrightarrow[\gamma\in \Gamma]{uo}f$) if $\left|f-f_{\gamma}\right|\wedge h \xrightarrow[\gamma\in \Gamma]{o}0_{F}$, for every $h\in F_{+}$.\medskip

Let us also recall the menagerie of the types of subspaces of a vector lattice. A linear subspace $E\subset F$ is called a \emph{vector sublattice} if it is closed with respect to the lattice operations of $F$ (equivalently, if $f\in E$ implies $\left|f\right|\in E$). In the sequel we will drop the word ``vector'' and call $E$ simply a sublattice. For any finite $G\subset E$ we have $\sup_{E}G=\sup_{F}G$ -- the equality of the supremums with respect to the order structure of $E$ and $F$. If the same equality holds for any $G\subset E$ whose supremum in $E$ exists, we will call $E$ a \emph{regular sublattice}. In fact, this equality is enough to check for monotone nets (or even nets decreasing to $0_{F}$). If for every $f>0_{F}$ there is $e\in E$ such that $0_{F}< e\le f$, then $E$ is called an \emph{order dense sublattice} of $F$. It is easy to see that every order dense sublattice is regular. Another subclass of regular sublattices is formed by \emph{ideals}, i.e. sublattices $E\subset F$ such that if $0_{F}\le f\le e\in E$, then $f\in E$. We will call a subset $G\subset F$ \emph{majorizing} if there is no proper ideal of $F$ that contains $G$. Equivalently, for every $f\in F_{+}$ there is a positive linear combination of absolute values of elements of $G$ that is larger than $f$.

An ideal is called a \emph{band}, if it is closed with respect to the order convergence (equivalently, closed with respect of taking supremums of arbitrary sets; this is enough to check only for increasing nets). One can show that if $F$ ia Archimedean and $G\subset F$, then $G^{dd}$ is the minimal band in $F$ that contains $G$. A band $E$ is called a \emph{projection band} if $F=E+E^{d}$. This condition is equivalent to existence of $P\in\Lo\left(F\right)$ such that $0_{\Lo\left(F\right)}\le P=P^{2}\le Id_{F}$ and $PF=E$. Here $\Lo\left(F\right)$ is the linear space of linear operators on $F$ ordered by the relation $T\le S$ if $Tf\le Sf$, for all $f\in F_{+}$. We say that $F$ has the \emph{projection property (PP)} if every band in $F$ is a projection band, and the \emph{principal projection property (PPP)} if every element of $F$ generates a projection band, i.e. $F=\left\{f\right\}^{d}+\left\{f\right\}^{dd}$, for every $f\in F$.\smallskip

\begin{remark}\label{tran}
We will often use the following observation. Let $\mathcal{S}\left(F\right)$ denote the collection of sublattices / regular sublattices / order dense sublattices / ideals / bands / projection bands of $F$, and let $H\subset E\subset F$ be linear subspaces. Then, $H\in \mathcal{S}\left(F\right)\Rightarrow H\in \mathcal{S}\left(E\right)$, and the converse is true if additionally $E\in \mathcal{S}\left(F\right)$.
\qed\end{remark}\smallskip

\begin{remark}\label{menag}
If $E$ is an ideal of $F$ and $H$ is a sublattice of $F$, then $E\cap H$ is an ideal in $H$. If $H$ is order dense, then $E\cap H$ is order dense in $E$. Moreover, if $H$ is an order dense sublattice of $E$, then $H+E^{d}$ is order dense in $F$ and such that $\left(H+E^{d}\right)\cap E=H$. If $G$ is a projection band, then $E\cap G$ is a projection band in $E$. Indeed, if $P$ is the corresponding projection, then $PE= E\cap G$, as $0_{F}\le Pe\le e\in E$, for any $e\in E_{+}$, and $Pg=g$, for any $g\in  E\cap G$; moreover, $\left.P\right|_{E}$ satisfies $0_{\Lo\left(E\right)}\le \left.P\right|_{E}=\left.P\right|_{E}^{2}\le Id_{E}$, and so it is a projection in $E$ whose range is $E\cap G$.
\qed\end{remark}\smallskip

Similarly to how a supremum of an infinite subset of a sublattice $E$ of $F$ can be different when calculating with respect to $F$ and $E$, the disjoint complement may depend on the ambient lattice as well. For $G\subset E$ let $G^{d}_{F},G^{d}_{E},G^{dd}_{F},G^{dd}_{E}$ stand for the disjoint complement, or the second complement with respect to $F$ or $E$.

\begin{proposition}\label{dis}Let $E$ be a sublattice of $F$ and let $G\subset E$. Then $G^{d}_{E}=G^{d}_{F}\cap E$ and $G^{dd}_{F}\cap E\subset G^{dd}_{E}$. If additionally $G^{dd}_{F}\cap E$ is a band in $E$, then $G^{dd}_{F}\cap E= G^{dd}_{E}$. If $G$ is itself a band in $E$, then $G=G^{dd}_{F}\cap E$.
\end{proposition}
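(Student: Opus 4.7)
The plan is to prove the four claims in order, each leaning on the previous ones, using only the fact that finite lattice operations (and hence the zero element) in the sublattice $E$ agree with those in the ambient lattice $F$, together with the characterization of $(\cdot)^{dd}$ as the smallest band containing a given set.

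For the first identity, I would simply unwind the definition: for $e\in E$, the condition $|e|\wedge|g|=0$ does not depend on whether $\wedge$ and $0$ are read in $E$ or in $F$, since $E$ is a sublattice. Thus $e\in G^{d}_{E}$ if and only if $e\in E$ and $e\in G^{d}_{F}$, which is exactly the claim $G^{d}_{E}=G^{d}_{F}\cap E$.

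For the second claim, I would apply the first identity twice. Setting $H=G^{d}_{E}=G^{d}_{F}\cap E\subset E$, the first part gives $H^{d}_{E}=H^{d}_{F}\cap E$, so
\[
G^{dd}_{E}=(G^{d}_{F}\cap E)^{d}_{F}\cap E.
\]
Now $G^{d}_{F}\cap E\subset G^{d}_{F}$ implies $(G^{d}_{F}\cap E)^{d}_{F}\supset G^{dd}_{F}$, and intersecting with $E$ yields $G^{dd}_{F}\cap E\subset G^{dd}_{E}$, as required.

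For the third claim, the key observation (recalled in the preliminaries) is that $G^{dd}_{E}$ is the smallest band of $E$ containing $G$. Since $G\subset G^{dd}_{F}\cap E$ always holds (because $G\subset G^{dd}_{F}$ and $G\subset E$), the extra hypothesis that $G^{dd}_{F}\cap E$ is a band in $E$ forces $G^{dd}_{E}\subset G^{dd}_{F}\cap E$, and the reverse inclusion was just established in the second part.

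For the last claim, if $G$ itself is a band in $E$, then $G=G^{dd}_{E}$, so by the second part we get $G^{dd}_{F}\cap E\subset G$; conversely $G\subset G^{dd}_{F}\cap E$ is immediate as above, giving equality. No step presents a genuine obstacle; the only thing to be careful about is to keep the subscripts $E$ and $F$ straight and to invoke the characterization of $(\cdot)^{dd}$ as the band hull exactly when it is needed.
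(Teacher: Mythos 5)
Your proposal is correct and follows essentially the same route as the paper: the first identity via agreement of disjointness in $E$ and $F$, the inclusion $G^{dd}_{F}\cap E\subset G^{dd}_{E}$ by applying that identity to $G^{d}_{E}$ and antitonicity of $(\cdot)^{d}_{F}$, and the last two claims via the characterization of $G^{dd}_{E}$ as the smallest band in $E$ containing $G$. Nothing further to add.
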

\begin{proof}
The first main claim is trivial since $f\bot_{E} g\Leftrightarrow f\bot_{F} g$. For the second claim from $G^{d}_{E}\subset G^{d}_{F}$ we get $G^{dd}_{E}=\left(G^{d}_{E}\right)^{d}_{F}\cap E\supset \left(G^{d}_{F}\right)^{d}_{F}\cap E=G^{dd}_{F}\cap E$. If $G^{dd}_{F}\cap E$ is a band in $E$, then it is a band that contains $G$, and so it contains $G^{dd}_{E}$. If $G$ is a band in $E$, then  $G^{dd}_{E}=G\subset G^{dd}_{F}\cap E\subset G^{dd}_{E}$.
\end{proof}

Let us recall few more definitions from the vector lattice theory. $F$ is called \emph{Archimedean} if $\frac{1}{n}f\downarrow 0_{F}$, for every $f\in F_{+}$. This condition is equivalent to the fact that if $ne\le f$, for $e\in F_{+}$ and every $n\in\N$, then $e= 0_{F}$. Consequently, a sublattice of an Archimedean lattice is Archimedean.

A \emph{principal ideal} is an ideal of the form $F_{e}=\bigcup\limits_{\alpha\ge 0}\alpha\left[-e,e\right]$ -- the minimal ideal that contains $e\in F_{+}$. By default this ideal is endowed with the lattice norm $\|\cdot\|_{e}$ defined by $\|f\|_{e}=\inf\left\{\alpha\ge 0:~ \left|f\right|\le\alpha e\right\}$. If $F=F_{e}$, then $e$ is called a \emph{strong unit} and $F$ is called \emph{unital}. $F$ has the $\sigma$\emph{-property} if every sequence is contained in a principal ideal. One can show that every Banach lattice has the $\sigma$-property (see also \cite{hager,hvm}).

$F$ has a \emph{countable supremum property} if for every $G\subset F$ and $g\in F$ such that $g=\sup G$ there is a sequence $\left\{g_{n}\right\}_{n\in\N}\subset G$ such that $\sup\left\{g_{n}\right\}_{n\in\N}=g$. This property is often satisfied by the Kothe function spaces (see \cite[Lemma 2.6.1]{mn}), and the spaces of continuous functions (see \cite{kv2}).\medskip

A subset $A\subset F$ is called \emph{solid}, if together with every $e\in A$ it contains $\left[-\left|e\right|,\left|e\right|\right]$. A (Hausdorff) vector topology on $F$ is called \emph{locally solid} if it has a base at $0_{F}$ that consists of solid sets. An example of such topology is produced by a \emph{lattice semi-norm}, i.e. a seminorm $\|\cdot\|$ on $F$ such that $\|e\|\le \|f\|$, for every $e,f\in F$ with $\left|e\right|\le \left|f\right|$. It is not hard to show that $F_{+}$ is closed, and if $E$ is a sublattice of $F$, then $\overline{E}$ is also a sublattice with $\overline{E_{+}}=\overline{E}_{+}$.

\begin{proposition}\label{closure}
Let $F$ be a vector lattice with a locally solid topology. Then:
\item[(i)] If $E$ is an ideal in $F$, then for every $e\in \overline{E}_{+}$ we have $e\in \overline{\left[0_{F},e\right]\cap E}$. In particular, $E$ is order dense in $\overline{E}$.
\item[(ii)] If $F$ is a Banach lattice, and $E$ is regular and dense sublattice of $F$, then $E$ is order dense.
\end{proposition}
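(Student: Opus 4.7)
\emph{Proof sketch.} For (i), the plan is to leverage continuity of the lattice operations in any locally solid topology, which makes $\overline{H}$ automatically a sublattice. To verify the ideal property, take $0_F\le f\le h$ with $h\in\overline{H}$; pick nets $\{h_\beta\}\subset H_+$ converging to $h$ (by replacing approximants with their positive parts, permissible since $H$ is a sublattice) and $\{g_\alpha\}\subset E_+$ converging to $f$ (using density of $E$ in $F$). Then $g_\alpha\wedge h_\beta\in E\cap[0_F,h_\beta]\subset H$ since $H$ is an ideal, and iterating the limits in $\beta$ then $\alpha$ via continuity of $\wedge$ yields $g_\alpha\wedge h_\beta\to f\wedge h=f$, placing $f\in\overline{H}$. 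For the projection band case, the band projection $P\colon E\to H$ satisfies $|Pe|\le P|e|\le|e|$, so it maps any solid set into itself and is continuous; it extends to $\tilde P\in\Lo(F)$ with $\tilde P^2=\tilde P$, $0_{\Lo(F)}\le\tilde P\le Id_F$, and $\tilde P F=\overline{H}$ (using density and continuity). One then checks $(Id_F-\tilde P)F\subset\overline{H}^d$, so $F=\overline{H}+\overline{H}^d$ and $\overline{H}$ is a projection band.

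For (ii), I fix $e\in\overline{E}_+$ and a neighborhood $V$ of $0_F$, choose a solid $U\subset V$, and pick $g\in E$ with $e-g\in U$. The approximant $e\wedge g_+$ lies in $[0_F,e]\cap E$ (membership in $E$ follows from $0_F\le e\wedge g_+\le g_+\in E$ and the ideal property of $E$). Using $g_+\ge g$, one obtains the identity and estimate $e-e\wedge g_+=(e-g_+)^+\le(e-g)^+\le|e-g|$, which lies in $U$ by solidity; hence $e-e\wedge g_+\in U\subset V$, showing $e\in\overline{[0_F,e]\cap E}$. The ``in particular'' clause follows: for $h>0_F$ in $\overline{E}$, the set $[0_F,h]\cap E$ cannot collapse to $\{0_F\}$, yielding $e\in E$ with $0_F<e\le h$.

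The hardest step is (iii), where regularity is far weaker than being an ideal. My plan is to use Banach completeness to construct a dominating element inside $\overline{E}$ and then exploit regularity at the very end. Given $h\in\overline{E}_+$ with $h>0_F$, extract $h_n\in E_+$ with $\|h_{n+1}-h_n\|\le 4^{-n}$. The partial sums $y_N=\sum_{n=1}^N 2^n|h_{n+1}-h_n|\in E_+$ form a norm-Cauchy increasing sequence, so they converge in norm to some $y\in F_+$ with $y_N\uparrow y$ in $F$ (by norm-closedness of the positive cone). Telescoping yields $|h-h_n|\le 2^{-n+1}y$. Setting $a_n=(h_n-2^{-n+1}y)^+$ and $a_{n,N}=(h_n-2^{-n+1}y_N)^+\in E_+$, one checks $a_n\le h$, $a_n\to h$ in norm (so $a_n>0_F$ for $n$ large), and $a_{n,N}\downarrow a_n$ in $F$.

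The final crucial move uses regularity on $\{a_{n,N}\}_N\subset E$. If $\inf_E a_{n,N}=0_F$, regularity forces $\inf_F a_{n,N}=0_F=a_n$, contradicting $a_n>0_F$. Thus the set $L=\{e\in E:e\le a_{n,N}\ \forall N\}$ of $E$-lower bounds must strictly contain $\{e\in E:e\le 0_F\}$ (otherwise $\sup_E L=0_F$ and hence $\inf_E a_{n,N}=0_F$); some $f\in L$ then has $f^+>0_F$, and since $a_{n,N}\ge 0_F$ gives $f^+\le a_{n,N}$ for all $N$, passing to the $F$-infimum yields $f^+\le a_n\le h$, establishing the order density of $E$ in $\overline{E}$.
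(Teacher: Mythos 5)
Your proposal is correct. Parts (i) and (ii) are essentially the paper's own arguments: in the ideal case you appeal to continuity of the lattice operations where the paper runs the explicit estimate $|g-v\wedge u|\le|f-u|+2|v-g|\in 3U$ inside a solid neighbourhood; in the projection-band case both you and the paper extend $P$ by continuity to $\tilde P$ with $0_{\Lo\left(F\right)}\le\tilde P=\tilde P^{2}\le Id_{F}$ and range $\overline{H}$ (your additional check $(Id_{F}-\tilde P)F\subset\overline{H}^{d}$ is unnecessary once that characterization of projection bands is invoked, and, like the paper, you take the existence of the continuous extension for granted); and (ii) is the paper's computation with $e\wedge g^{+}$ in place of $e\wedge f^{+}$. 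Part (iii) takes a genuinely different, though related, route. The paper normalizes $\|f\|=2$, chooses $e_{n}\in E_{+}$ with $\|f-e_{n}\|\le 2^{-n}$, sets $e=\sum|f-e_{n}|$ so that $e_{n}\ge\left(f-e\right)^{+}>0_{F}$, applies regularity to the approximating sequence $\left\{e_{n}\right\}$ itself, and pushes the resulting $g\in E$ below $f$ using $e_{n}\to f$ and closedness of the positive cone. You instead apply regularity to the decreasing family $a_{n,N}=\left(h_{n}-2^{-n+1}y_{N}\right)^{+}\in E$, whose $F$-infimum $a_{n}$ already satisfies $0_{F}<a_{n}\le h$, so the positive element $f^{+}\in E$ you produce sits below $h$ by pure order; this avoids the normalization trick and the final limit step, at the cost of the two-parameter family and the infinite distributivity fact $x_{N}\downarrow x\Rightarrow x_{N}^{+}\downarrow x^{+}$. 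Your explicit lower-bound argument via the set $L$ also spells out a step the paper leaves implicit when passing from $\inf e_{n}\ne 0_{F}$ to a strictly positive lower bound in $E$. Finally, as in the paper, (iii) requires $E$ to be dense in $F$ (or the conclusion to be read as order density of $E$ in $\overline{E}$, which is exactly what you prove).
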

\begin{proof}
(i): Let $U$ be an open solid neighborhood of $0_{F}$. Since $e\in \overline{E}$ there is $f\in E$ such that $\left|e-f\right|\in U$. Let $g=e\wedge f^{+}\in \left[0_{F},e\right]$. Since $E$ is an ideal, $g\in E$; moreover $0_{F}\le e-g\le \left|e-f\right|$. Hence, since $U$ is solid, we have $\left|e-g\right|\in U$, and since $U$ was chosen arbitrarily, the result follows.\medskip

(ii): Let $f\in F_{+}$ be such that $\|f\|=2$. For every $n\in\N$ there is $e_{n}\in E_{+}$ such that $\|f-e_{n}\|\le \frac{1}{2^{n}}$. Let $e=\sum\limits_{n\in\N}\left|f-e_{n}\right|$. Then, $\|e\|\le 1$, and for every $n\in\N$ we have $e_{n}\ge f-\left|f-e_{n}\right|\ge f- e$, and so $e_{n}\ge\left(f-e\right)^{+}>0_{F}$, where the latter follows from $f>0_{F}$ and $\|e\|<\|f\|$. Hence, $\inf\limits_{n\in\N} e_{n}\ne 0_{F}$ (it either does not exist or is greater than $0_{F}$), and since $E$ is regular, there is $g\in E$ such that $e_{n}\ge g>0_{F}$, for every $n\in\N$. Since $e_{n}\xrightarrow[n\to\8]{}f$, it follows that $f\ge g$.
\end{proof}

An example of a non-regular dense sublattice of a Banach lattice see in Example \ref{prrd}. Also note that intersection of two closed regular (order dense) sublattices of a Banach space does not have to be regular (order dense), as will be shown by examples \ref{inte}, and \ref{intero}.

If $E$ is a vector lattice, then a linear operator $T:F\to E$ between vector lattices is called a \emph{vector lattice homomorphism} if it preserves the lattice operations (it is enough to check that it preserves the absolute value). In the sequel we will drop ``vector lattice'' from the term ``vector lattice  homomorphism'', as we will not deal with homomorphisms of other structures. It is easy to see that if $T$ is a homomorphism, then $\Ker T$ is an ideal, $TF$ is a sublattice of $E$, and $T\ge 0_{\Lo\left(F,E\right)}$. An \emph{isomorphism} is a bijective homomorphism.

\begin{remark}\label{homo}
A useful property of homomorphisms is that if $Tf\le Tg\le Th$, and $f\le h$, then there is $e\in \left[f,h\right]$ such that $Te=Tg$. In other words, $T\left[f,h\right]=\left[Tf,Th\right]\cap TF$. Indeed, if $e=\left(g\vee f\right)\wedge h\in \left[f,h\right]$, we have $Te=\left(Tg\vee Tf\right)\wedge Th=Tg$.
\qed\end{remark}

\section{Order continuity of positive operators and homomorphisms}\label{d}

In this section we examine the connections between two notions of continuity for a positive operator -- topological continuity and order continuity. A special attention will be paid to the order continuous homomorphisms. Recall that a linear operator $T:F\to E$ is called \emph{order continuous} if it preserves order convergence. One can show that for positive operators order continuity is equivalent to the property that $Tf_{\gamma}\downarrow 0_{E}$, whenever $f_{\gamma}\downarrow 0_{F}$.

It is easy to see that the composition of order continuous operators is order continuous, and that every isomorphism is an order continuous operator. Also, a sublattice $H$ of $F$ is regular if and only if its inclusion into $F$ is an order continuous homomorphism. Consequently, a restriction of an order continuous operator to a regular sublattice is order continuous as a composition of two order continuous maps (the other one being the inclusion operator). Consider the set of order continuity of an operator (see also \cite[Theorem 88.6]{zaanen}).

\begin{proposition}\label{lo}
Let $T:F\to E$ be a positive operator between vector lattices and let $H=\left\{h\in F:~ \left.T\right|_{F_{\left|h\right|}}\mbox{ is order continuous}\right\}$. Then, $H$ is an ideal, and $\left.T\right|_{H}$ is order continuous. Moreover, if $F$ and $E$ are endowed with locally solid topologies, and $T$ is continuous, then $H$ is closed.
\end{proposition}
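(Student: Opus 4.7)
The proof breaks into three pieces, which I would tackle in the order of the statement. First, I would verify that $H$ is a solid linear subspace, hence an ideal. Closure under scalar multiples is immediate from $F_{\left|\lambda h\right|}=F_{\left|h\right|}$. For solidity, if $\left|g\right|\le\left|h\right|$ with $h\in H$, then $F_{\left|g\right|}$ is an ideal (hence a regular sublattice) of $F_{\left|h\right|}$, so $\left.T\right|_{F_{\left|g\right|}}$ is the restriction of the order continuous operator $\left.T\right|_{F_{\left|h\right|}}$ to a regular sublattice and is therefore order continuous. Closure under addition reduces, via $F_{\left|h_{1}+h_{2}\right|}\subset F_{\left|h_{1}\right|+\left|h_{2}\right|}$ together with solidity, to showing $h_{1}+h_{2}\in H$ when $h_{1},h_{2}\in H_{+}$. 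Given $f_{i}\downarrow 0_{F}$ in $F_{h_{1}+h_{2}}$, I would pass to a tail so that $f_{i}\le\alpha(h_{1}+h_{2})$ for a fixed $\alpha>0$, and split $f_{i}=g_{i}+k_{i}$ with $g_{i}:=f_{i}\wedge\alpha h_{1}\in F_{h_{1}}$ and $k_{i}:=(f_{i}-\alpha h_{1})^{+}\le\alpha h_{2}\in F_{h_{2}}$. Both nets are decreasing in $i$ with infimum $0_{F}$; by hypothesis $Tg_{i}\downarrow 0_{E}$ and $Tk_{i}\downarrow 0_{E}$, and since the sum of two decreasing order-null nets is order-null with decreasing partial sums $Tf_{i}=Tg_{i}+Tk_{i}$, we obtain $Tf_{i}\downarrow 0_{E}$.

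Second, I would prove order continuity of $\left.T\right|_{H}$. If $f_{i}\downarrow 0$ in $H$, regularity of the ideal $H$ gives $f_{i}\downarrow 0_{F}$; after discarding initial indices we may assume $f_{i}\in F_{f_{i_{0}}}$, and by regularity of that ideal $f_{i}\downarrow 0$ in $F_{f_{i_{0}}}$. Since $f_{i_{0}}\in H$ gives order continuity of $\left.T\right|_{F_{f_{i_{0}}}}$, we conclude $Tf_{i}\downarrow 0_{E}$.

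For the topological assertion I would show the stronger statement $\overline{H}\subset H$. Fix $h\in\overline{H}$ and a net $\left\{h_{n}\right\}\subset H$ with $h_{n}\to h$; then $\left|h_{n}\right|\to\left|h\right|$ in $F$ by continuity of the lattice operations, and $T(\bigl|\left|h\right|-\left|h_{n}\right|\bigr|)\to 0_{E}$ by continuity of $T$. To verify $h\in H$, take $f_{i}\downarrow 0$ in $F_{\left|h\right|}$ with $f_{i}\le\alpha\left|h\right|$ and mimic the earlier splitting by setting $g_{n,i}:=f_{i}\wedge\alpha\left|h_{n}\right|\in F_{\left|h_{n}\right|}$. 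For each fixed $n$ the net $g_{n,i}$ decreases to $0_{F}$ as $i$ varies, and by the solidity of $H$ established in step one we have $\left|h_{n}\right|\in H$, so $Tg_{n,i}\downarrow 0_{E}$. The estimate $f_{i}-g_{n,i}=(f_{i}-\alpha\left|h_{n}\right|)^{+}\le\alpha\bigl|\left|h\right|-\left|h_{n}\right|\bigr|$ and positivity of $T$ yield
\[
Tf_{i}\le Tg_{n,i}+\alpha T\bigl(\bigl|\left|h\right|-\left|h_{n}\right|\bigr|\bigr).
\]
If $0_{E}\le e\le Tf_{i}$ for every $i$, then $e-\alpha T(\bigl|\left|h\right|-\left|h_{n}\right|\bigr|)$ is a lower bound of $\left\{Tg_{n,i}\right\}_{i}$ and hence $\le\inf_{i}Tg_{n,i}=0_{E}$; this gives $e\le\alpha T(\bigl|\left|h\right|-\left|h_{n}\right|\bigr|)$ for every $n$. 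Since the right-hand side tends to $0_{E}$ topologically and the positive cone of a (Hausdorff) locally solid vector lattice is closed, we conclude $e\le 0_{E}$, so $e=0_{E}$. Thus $\inf_{i}Tf_{i}=0_{E}$, hence $Tf_{i}\downarrow 0_{E}$, and $h\in H$.

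The main obstacle is precisely this last argument: topological smallness of $T(\bigl|\left|h\right|-\left|h_{n}\right|\bigr|)$ does not translate directly into an order-theoretic infimum. The trick is to reformulate $Tf_{i}\downarrow 0_{E}$ as ``every lower bound $e$ of $\left\{Tf_{i}\right\}$ satisfies $e\le 0_{E}$,'' which allows the order-null behaviour of $Tg_{n,i}$ in the index $i$ (for fixed $n$) to be combined with the topological smallness in $n$ through closedness of the positive cone. The rest is a careful bookkeeping exercise in lattice identities, already familiar from the additivity step.
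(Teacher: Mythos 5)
Your proof is correct, and the first two parts (solidity, additivity via the splitting $f_i=f_i\wedge \alpha h_1+(f_i-\alpha h_1)^{+}$, and order continuity of $\left.T\right|_{H}$ by passing to a tail inside a principal ideal) are essentially the paper's argument, just with the scaling constant $\alpha$ made explicit. The closedness step is where you diverge in implementation: the paper first invokes its Proposition on closures of ideals (an ideal is order dense in its closure, indeed $h\in\overline{\left[0_F,h\right]\cap H}$) to choose an approximant $g\in\left[0_F,h\right]\cap H$ with $h-g$ in a prescribed solid neighborhood; then $(f_i-g)^{+}\le h-g$ gives $e\le T(f_i\wedge g)+T(h-g)$, and order continuity on $F_g$ plus solidity of the neighborhood yields $e=0_E$ directly. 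You instead take an arbitrary approximating net $h_n\to h$ in $H$, replace the domination from below by the modulus estimate $(f_i-\alpha\left|h_n\right|)^{+}\le\alpha\bigl|\left|h\right|-\left|h_n\right|\bigr|$, and finish with closedness of the positive cone. Both routes rest on the same decomposition idea (split $f_i$ into a piece inside a principal ideal generated by an element of $H$ plus a topologically small remainder, bound the putative lower bound $e$ by the image of the remainder, and let the remainder tend to $0$); yours trades the order-density lemma for the closed-cone argument, and your parenthetical Hausdorff hypothesis is exactly what the paper's own conclusion ``$e\in U$ for every solid $U$ implies $e=0_E$'' implicitly uses, so you are not assuming more than the paper does.
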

\begin{proof}
Clearly, $\alpha H\subset H$, for every $\alpha>0$. If $h\in H$ and $\left|g\right|\le \left|h\right|$, then $F_{\left|g\right|}$ is a regular sublattice of $F_{\left|h\right|}$, and so order continuity of $\left.T\right|_{F_{\left|h\right|}}$ implies order continuity of $\left.T\right|_{F_{\left|g\right|}}$. Hence, $H$ is a solid set. Let $g,h\in H_{+}$ and $g+h\ge f_{\gamma}\downarrow 0_{F}$. Then, $g\ge f_{\gamma}\wedge g\downarrow 0_{F}$ and $h\ge \left(f_{\gamma}-g\right)^{+}\downarrow 0_{F}$, and since $\left.T\right|_{F_{h}}$ and $\left.T\right|_{F_{g}}$ are order continuous,  $Tf_{\gamma}=T\left(f_{\gamma}\wedge g\right)+T\left(f_{\gamma}-g\right)^{+}\downarrow 0_{E}$. Hence, $g+h\in H$, and so $H$ is an ideal.\medskip

Let $H\ni f_{\gamma}\downarrow 0_{F}$. Fix an index $\beta$. We have $f_{\beta}\in H$ and $f_{\beta}\ge f_{\gamma}\downarrow 0_{F}$ (for $\gamma\ge \beta$). Since $\left.T\right|_{F_{f_{\beta}}}$ is order continuous, it follows that $Tf_{\gamma}\downarrow 0_{E}$. Hence, $\left.T\right|_{H}$ is order continuous.\medskip

Let us prove the last claim. Assume that $h\in \overline{H}_{+}$, $h\ge f_{\gamma}\downarrow 0_{F}$ and $0_{E}\le e\le Tf_{\gamma}$, for every $\gamma$. Let $U$ be a solid open neighborhood of $0_{E}$. Since $T$ is continuous there is an open solid neighborhood $V$ of $0_{F}$ such that $TV\subset U$. Since $h\in \overline{H}_{+}$, from part (i) of Proposition \ref{closure}, there is $g\in \left[0_{F},h\right]\cap H\cap \left(h+V\right)$. Then, $g\ge f_{\gamma}\wedge g\downarrow 0_{F}$, and $f_{\gamma}-f_{\gamma}\wedge g=\left(f_{\gamma}-g\right)^{+}\le h-g$, for every $\gamma$. Then, $$e\le Tf_{\gamma}= T\left(f_{\gamma}\wedge g\right)+T\left(f_{\gamma}-f_{\gamma}\wedge g\right)\le T\left(f_{\gamma}\wedge g\right)+T\left(h- g\right),$$ for every $\gamma$, and since $g\in H$, it follows that $e\le T\left(h-g\right)\in U$. Since $U$ was chosen arbitrarily, we conclude that $e=0_{E}$, and so $Tf_{\gamma}\downarrow 0_{E}$, from where $h\in H$.
\end{proof}

\begin{corollary}\label{oc}For a positive operator $T:F\to E$ between Archimedean vector lattices the following conditions are equivalent:
\item[(i)] $T$ is order continuous;
\item[(ii)] $\left.T\right|_{H}$ is order continuous, for every regular sublattice $H$ of $F$;
\item[(iii)] There is a majorizing $G\subset F_{+}$ such that $\left.T\right|_{F_{g}}$ is order continuous, for every $g\in G$;
\item[(iv)] There is an order dense and majorizing sublattice $H$ such that $\left.T\right|_{H}$ is order continuous.
\end{corollary}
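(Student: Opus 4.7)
The plan is to prove (i) $\Rightarrow$ (ii) $\Rightarrow$ (iii) $\Rightarrow$ (i) together with (i) $\Leftrightarrow$ (iv), using Proposition \ref{lo} to drive the implications into (i). The directions out of (i) are essentially free: for (i) $\Rightarrow$ (ii), the inclusion of any regular sublattice into $F$ is by definition an order continuous homomorphism, so $\left.T\right|_H$ is a composition of order continuous maps; for (i) $\Rightarrow$ (iv), one takes $H = F$. For (ii) $\Rightarrow$ (iii), one takes $G = F_+$, noting that each principal ideal $F_g$ is an ideal (hence a regular sublattice) and that $F_+$ is clearly majorizing. For (iii) $\Rightarrow$ (i), the set $H_T = \{h \in F : \left.T\right|_{F_{|h|}} \text{ is order continuous}\}$ from Proposition \ref{lo} is an ideal on which $\left.T\right|_{H_T}$ is order continuous; hypothesis (iii) yields $G \subset H_T$, and since $G$ is majorizing we must have $H_T = F$, so $T$ itself is order continuous.

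The main obstacle is (iv) $\Rightarrow$ (i), which I would handle directly. Given $f_i \downarrow 0_F$ and $e \in E$ with $0_E \le e \le Tf_i$ for every $i$, I aim to show $e = 0_E$. First, the majorizing property of $H$ supplies $h_0 \in H_+$ with $h_0 \ge f_{i_0}$ for some $i_0$, and after passing to the cofinal tail $i \ge i_0$ we may assume $0_F \le f_i \le h_0$ throughout, so that $h_0 - f_i \uparrow h_0$ in $F$. A standard auxiliary fact, which I would record in passing, is that order density of $H$ together with the Archimedean hypothesis forces $f = \sup_F\{h \in H : 0_F \le h \le f\}$ for every $f \in F_+$: if some $g$ were a strictly smaller upper bound, order density applied to $f - f\wedge g > 0_F$ would produce $0_F < h' \in H$ with $h' \le f - f\wedge g$, and an induction combining $h' \le g$ with this slack would give $nh' \le f \wedge g$ for every $n$, contradicting Archimedean. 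Applying this fact to each $h_0 - f_i$ shows that the upward directed set $A = \{h \in H : 0_F \le h \le h_0 - f_i \text{ for some } i\}$ satisfies $\sup_F A = h_0$; regularity of $H$ (automatic since $H$ is order dense) then gives $\sup_H A = h_0$, i.e., $A \uparrow h_0$ in $H$. Order continuity of $\left.T\right|_H$ produces $T(A) \uparrow Th_0$ in $E$, while each $h \in A$ obeys $Th \le T(h_0 - f_i) = Th_0 - Tf_i \le Th_0 - e$ for the corresponding $i$. Hence $Th_0 = \sup_E T(A) \le Th_0 - e$, forcing $e = 0_E$.

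The hard part is clearly this last implication, where I must bridge order continuity from the sublattice $H$ to all of $F$ without any topological hypothesis on $T$, using only the order-theoretic interplay of order density (to approximate from below within $H$), the majorizing property (to trap $f_i$ under a fixed $h_0 \in H$), and Archimedean (to rule out pathological upper bounds). Everything else reduces to Proposition \ref{lo} and basic composition facts about order continuity.
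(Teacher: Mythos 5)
Your proposal is correct and follows essentially the same route as the paper: the easy implications and (iii)$\Rightarrow$(i) are handled exactly as in the text via Proposition \ref{lo}, and your (iv)$\Rightarrow$(i) rests on the same two ingredients the paper uses, namely the majorizing property to trap the net under an element of $H$ and the fact (here \cite[Theorem 1.34]{ab}, which you reprove correctly) that order density plus Archimedeanness gives $f=\sup_{F}\left(\left[0_{F},f\right]\cap H\right)$. The only difference is cosmetic: you pass to the increasing net $\left\{h\in H:0_{F}\le h\le h_{0}-f_{i}\right\}$ with supremum $h_{0}$, while the paper works dually with the downward directed set of elements of $H$ dominating the $f_{i}$; also, your appeal to regularity of $H$ to get $\sup_{H}A=h_{0}$ is unnecessary, since a supremum computed in $F$ that lies in $H$ is automatically the supremum in $H$.
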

\begin{proof}
First, note that (i)$\Rightarrow$(ii) was discussed in the beginning of the section, (ii)$\Rightarrow$(iii) and (ii)$\Rightarrow$(iv) are trivial, and (iii)$\Rightarrow$(i) follows immediately from Proposition \ref{lo}.\medskip

(iv)$\Rightarrow$(i): Let $f_{\gamma}\downarrow 0_{F}$ and let $e\in E$ be such that $0_{E}\le e\le Tf_{\gamma}$, for every $\gamma$. Let $G=\left\{g\in H~ \exists \gamma: f_{\gamma}\le g\right\}$, which is directed downwards. Note that $e\le TG$, since for every $g\in G$ there is $\gamma$ such that $e\le Tf_{\gamma}\le Tg$.

Let $f\in H$ be such that $0_{F}\le f\le G$. Fix $\gamma$. Since $H$ is majorizing, there is $h\in H$ with $h\ge f_{\gamma}$, while since $H$ is order dense, $\sup\left[0_{F},h-f_{\gamma}\right]\cap H=h-f_{\gamma}$ (see \cite[Theorem 1.34]{ab}). Hence, $\inf\left[f_{\gamma},h\right]\cap H=h-\sup\left[0_{F},h-f_{\gamma}\right]\cap H=f_{\gamma}$, and since $f\le G\supset \left[f_{\gamma},h\right]\cap H$, it follows that $f\le f_{\gamma}$. Since $\inf\limits_{\gamma\in \Gamma} f_{\gamma}=0_{F}$, we conclude that $f=0_{F}$.

Hence, $\inf_{H}G=0_{F}$, and since $G$ is a decreasing net in $H$ and $\left.T\right|_{H}$  is order continuous, $\inf_{E}TG=0_{E}$. Therefore, $e=0_{E}$, and so $Tf_{\gamma}\downarrow 0_{E}$. Thus, $T$ is order continuous.
\end{proof}

Let $F$ be a locally solid lattice. We will call a subset $G\subset F$ \emph{almost majorizing} if there is no proper closed ideal of $F$ that contains $G$. Obviously, a dense subset of $F_{+}$ is almost majorizing. It follows from Proposition \ref{lo} that if in the setting of the proposition $T$ was continuous with respect to some locally solid topologies on $E$ and $F$, and there was an almost majorizing $G\subset F_{+}$ such that  $\left.T\right|_{F_{g}}$ is order continuous, for every $g\in G$, then $T$ would be order continuous.

\begin{theorem}\label{choc}
Let $T$ be a continuous operator between locally solid lattices $F$ and $E$. Let $H$ be a sublattice of $F$ such that $\left.T\right|_{H}$ is positive and order continuous. Then, $T$ is positive and order continuous provided that one of the following conditions is satisfied:
\item[(i)] $H$ is a dense ideal in $F$;
\item[(ii)] $H$ is order dense and almost majorizing;
\item[(iii)] $F$ is a Banach lattice and $H$ is a dense regular sublattice of $F$;
\item[(iv)] $F$ is a Banach lattice with the countable supremum property and $H$ is dense.
\end{theorem}
\begin{proof}
Positivity of $T$ follows from  $TF_{+}=T\overline{H}_{+}=T\overline{H_{+}}\subset \overline{TH_{+}}\subset \overline{E_{+}}=E_{+}$.\medskip

If $H$ is a dense ideal, $F_{h}$ is a regular sublattice of $H$ for every $h\in H_{+}$, and so $\left.T\right|_{F_{h}}$ is order continuous. Since $H_{+}$ is almost majorizing, it follows from the comment before the theorem that $T$ is order continuous.

If $H$ is order dense and almost majorizing, let $G$ be the (dense) ideal generated by $H$, i.e. an ideal which is the intersection of all ideals that contain $H$. Then, $H$ is order dense and majorizing in $G$, and so from Corollary \ref{oc}, $\left.T\right|_{G}$ is order continuous. Thus, we have reduced this case to the previous one.

From part (ii) of Proposition \ref{closure}, if $H$ is a dense regular sublattice of a Banach lattice $F$, it is order dense, and so we have reduced this case to the previous one again.\medskip

Now assume that $F$ is a Banach lattice with the countable supremum property. It is enough to show that if $\left\{f_{n}\right\}_{n\in\N}\subset F_{+}$ are such that $f_{n}\downarrow0_{F}$, then $Tf_{n}\downarrow 0_{E}$. Assume that there is $e>0_{E}$ such that $Tf_{n}\ge e$, for every $n\in\N$. Let $U$ be an open solid neighborhood of $0_{E}$ which does not contain $e$, and let $r>0$ be such that $rT\Bbo_{F}\subset U$. Since $H$ is dense, for every $m,n\in\N$ there is $h_{mn}\in H_{+}$ such that $\|f_{n}-h_{mn}\|<\frac{r}{2^{m+n}}$. As $F$ is a Banach lattice, there is $g=\sum\limits_{m,n\in\N}\left|f_{n}-h_{mn}\right|$ with $\|g\|\le r$, from where $Tg\in U$.

For every $m,n\in\N$ we have $Th_{mn}\ge 0_{E}$ and $Th_{mn}\ge Tf_{n}-\left|Tf_{n}-Th_{mn}\right|\ge e-Tg$, from where $Th_{mn}\ge \left(e-Tg\right)^{+}= e-Tg\wedge e$. Note that since $Tg\in U$, and the latter is solid with $e\notin U$, it follows that $e-Tg\wedge e>0_{E}$. Hence, $\inf\limits_{m,n\in\N}Th_{mn}\ne 0_{E}$ (it either does not exist or is greater than $0_{E}$), and since $\left.T\right|_{H}$ is order continuous, we have $\inf\limits_{m,n\in\N}~_{H}h_{mn}\ne 0_{F}$. Therefore, there is $h\in H$ such that $h_{mn}\ge h>0_{F}$, for every $m,n\in\N$. Since $h_{mn}\xrightarrow[m\to\8]{}f_{n}$, it follows that $f_{n}\ge h$, for every $n\in\N$, but this contradicts the assumption $\inf\limits_{n\in\N}f_{n}=0_{F}$.
\end{proof}

\begin{remark}Note that from part (i) of Proposition \ref{closure}, if $H$ is a dense ideal in $F$, then it is order dense, and so the condition (ii) in the corollary is more general than the condition (i). In condition (iv) the requirement for $F$ to be a Banach lattice can be replaced with the requirement for $E$ to be a Banach lattice (the proof is similar).\qed\end{remark}

Since regularity of a sublattice is equivalent to order continuity of its inclusion, we get the following corollary.

\begin{corollary}\label{nreg}If $E$ is a sublattice of a normed lattice $F$, then $E$ is regular if and only if $\overline{E}$ is regular and $E$ is order dense in $\overline{E}$, provided that $F$ is a Banach lattice, or $E$ is an ideal in $\overline{E}$.
\end{corollary}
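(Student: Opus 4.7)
The plan is to leverage the equivalence, recalled just before the corollary, between regularity of a sublattice and order continuity of its inclusion map, together with the transitivity statement of Remark~\ref{tran} and the approximation tools in Proposition~\ref{closure}.

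\emph{Backward direction.} Assume $\overline{E}$ is regular in $F$ and $E$ is order dense in $\overline{E}$. Every order dense sublattice is regular, hence $E$ is regular in $\overline{E}$. The converse half of Remark~\ref{tran}, applied to the chain $E\subset\overline{E}\subset F$ with the intermediate sublattice $\overline{E}$ regular in $F$, then delivers that $E$ is regular in $F$. This implication needs neither of the two extra hypotheses.

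\emph{Forward direction.} Assume $E$ is regular in $F$. I must show (a) $E$ is order dense in $\overline{E}$, and (b) $\overline{E}$ is regular in $F$, i.e., the inclusion $\overline{E}\hookrightarrow F$ is order continuous. The strategy in both cases is to transfer the problem to $\overline{E}$ as the ambient lattice and apply the appropriate existing result to the inclusion $\overline{E}\hookrightarrow F$ with dense sublattice $H=E$. If $F$ is a Banach lattice, then the closed sublattice $\overline{E}$ inherits completeness and is itself a Banach lattice; Remark~\ref{tran} gives $E$ regular in $\overline{E}$, and $E$ is dense in $\overline{E}$ by construction, so Proposition~\ref{closure}(iii) applied inside $\overline{E}$ yields (a), while (b) follows from Theorem~\ref{choc}(i) applied to the inclusion $\overline{E}\hookrightarrow F$ (whose restriction to $E$ is order continuous because $E$ is regular in $F$). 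If instead $E$ is an ideal in $\overline{E}$, then (a) is immediate from Proposition~\ref{closure}(ii) with ambient lattice $\overline{E}$, and for (b) I would apply Corollary~\ref{almo}(i) to the inclusion $\overline{E}\hookrightarrow F$, since $E$ is a dense ideal of $\overline{E}$ on which the inclusion is order continuous.

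No step poses a serious obstacle; the corollary is essentially a bookkeeping assembly of Remark~\ref{tran}, Proposition~\ref{closure}, Theorem~\ref{choc}, and Corollary~\ref{almo}. The one delicate point is verifying that in the Banach lattice case the closed sublattice $\overline{E}$ is again a Banach lattice, so that Theorem~\ref{choc}(i) and Proposition~\ref{closure}(iii) may be invoked with $\overline{E}$ playing the role of the ambient Banach lattice.
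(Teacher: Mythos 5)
Your proof is correct and follows essentially the same route as the paper: Remark~\ref{tran} for the backward direction, and for the forward direction Theorem~\ref{choc}(i) (Banach case) or Corollary~\ref{almo}(i) (ideal case) applied to the inclusion of $\overline{E}$ into $F$ after transferring regularity of $E$ to $\overline{E}$ via Remark~\ref{tran}. If anything, you are slightly more explicit than the paper in verifying that $E$ is order dense in $\overline{E}$ (via Proposition~\ref{closure}(iii), resp.\ (ii), inside the Banach lattice $\overline{E}$), a point the paper leaves implicit in its citations.
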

\begin{proof}
Sufficiency follows from Remark \ref{tran}. From the same remark, if $E$ is regular in $F$, it is also regular in $\overline{E}$. Hence, $\overline{E}$ is regular, from parts (i) and (iii) of Theorem \ref{choc}, applied to the inclusion of $\overline{E}$ into $F$.
\end{proof}

It would be desirable to drop all additional assumptions from the corollary:

\begin{question}For which topological vector lattices the closure of a regular sublattice is regular?
\end{question}

\begin{remark}
For an example of a non-normed topological vector lattice with this property see part (ii) of Theorem \ref{sublo} below.\qed\end{remark}

\begin{remark}If $E$ is an order complete vector lattice, all results of the section up to this point are valid for order bounded (=regular) operators, since order continuity of $T:F\to E$ is equivalent to order continuity of $\left|T\right|$ (see \cite[Theorem 1.56]{ab}), and the operations $T\to \left|T\right|$ and $T\to \left.T\right|_{H}$ commute for any ideal $H$ of $F$, due to the Riesz-Kantorovich formulas (see \cite[Corollary 1.3.4]{mn}).\qed\end{remark}

\begin{question}Is Proposition \ref{lo} valid for regular operators without the assumption that $E$ is order complete?
\end{question}

Let us now characterize order continuous homomorphisms. Note that the conditions from Corollary \ref{oc} can be added to the list in the following theorem.

\begin{theorem}\label{hoc}For a homomorphism $T:F\to E$ between Archimedean vector lattices the following conditions are equivalent:
\item[(i)] $T$ is order continuous;
\item[(ii)] $T$ preserves supremums of arbitrary sets;
\item[(iii)] $\Ker T$ is a band and $TF$ is a regular sublattice in $E$;
\item[(iv)] $T^{-1}H$ is a band in $F$, for every band $H$ in $E$;
\item[(v)] $T\left(G^{dd}\right)\subset \left(TG\right)^{dd}$, for every $G\subset F$.\medskip
\item[(v')] $T\left(G^{dd}\right)\subset \left(TG\right)^{dd}$, for every ideal $G\subset F$.\medskip

If additionally there is $f\in F_{+}$ such that $\dim TF_{f}=\8$ (e.g. if $F$ has the $\sigma$-property and $\dim TF=\8$), then the conditions above are equivalent to
\item[(vi)] $TH$ is regular in $F$, for every regular sublattice $H$ of $F$;
\item[(vi')] $TH$ is regular in $F$, for every order dense sublattice $H$ of $F$.
\end{theorem}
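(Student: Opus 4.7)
My plan is to organize (i)--(v) as a short cycle of implications that pivots on the classical surjective characterization \cite[Theorem 18.13]{zl}, and to defer (vi) to the later sections on composition operators, as announced in the introduction. I would begin with the two formal equivalences. For (i) $\Leftrightarrow$ (ii): since $T$ is a positive homomorphism it preserves finite suprema, so order continuity reduces to the property $f_\alpha \downarrow 0_F \Rightarrow Tf_\alpha \downarrow 0_E$, which in turn is equivalent to preservation of arbitrary suprema by writing $\sup G$ as the order limit of the net of finite suprema of subsets of $G$. For (iv) $\Leftrightarrow$ (v): given (iv), $(TG)^{dd}$ is a band in $E$, so $T^{-1}((TG)^{dd})$ is a band in $F$ containing $G$ and therefore $G^{dd}$, which yields $T(G^{dd}) \subset (TG)^{dd}$; conversely, given (v), for any band $H \subset E$ and $G = T^{-1}H$ one has $T(G^{dd}) \subset (TG)^{dd} \subset H^{dd} = H$, so $G^{dd} \subset G$ and $T^{-1}H$ is a band.

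Next I set up the cycle (i) $\Rightarrow$ (iv) $\Rightarrow$ (iii) $\Rightarrow$ (i). The first implication is direct: $T^{-1}H$ is an ideal (preimage of an ideal under a homomorphism), and for $f_\alpha \uparrow f$ in $T^{-1}H$ order continuity gives $Tf_\alpha \uparrow Tf$ in $E$, with $Tf \in H$ by order closure of the band. For (iii) $\Rightarrow$ (i), I would factor $T = \iota \circ T_s$, where $T_s : F \to TF$ is $T$ viewed surjectively and $\iota : TF \hookrightarrow E$ is the inclusion. Since $\Ker T_s = \Ker T$ is a band by (iii), the classical surjective theorem gives $T_s$ order continuous, and $\iota$ is order continuous by the regularity of $TF$ in $E$, so $T$ is too. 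In (iv) $\Rightarrow$ (iii), taking $H = \{0_E\}$ makes $\Ker T$ a band at once.

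The remaining and hardest point is the regularity of $TF$ in $E$ within (iv) $\Rightarrow$ (iii). Given $\{Tf_\alpha\} \subset TF$ decreasing with $\inf_{TF} Tf_\alpha = 0_E$ and a hypothetical positive lower bound $e \in E_+$, the natural approach is to apply Remark \ref{homo} to produce $h_\alpha \in [0_F, f_\alpha]$ with $Th_\alpha = e$; but this step requires $e \in TF$, which need not hold, and this is where an abstract argument stalls. The authors' strategy, announced in the introduction, is to resolve this by reducing to the case of a composition operator between lattices of continuous functions, where order continuity is controlled by a concrete property of the underlying map on base spaces (Theorem \ref{com}). I expect the same mechanism to handle (vi): the dimension hypothesis $\dim TF_f = \infty$ provides enough room inside $TF$ for a construction that, from any non-order-continuous $T$, produces a regular (in fact order dense) sublattice $H \subset F$ whose image $TH$ fails to be regular in $E$. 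Both of these steps would be completed only after Section \ref{c} is in place, in keeping with the partial-proof plan announced earlier.
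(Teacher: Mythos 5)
Most of your cycle is sound: (i)$\Leftrightarrow$(ii), (i)$\Rightarrow$(iv), (iv)$\Leftrightarrow$(v) (your direct argument for (v)$\Rightarrow$(iv), using $T(G^{dd})\subset H$ to get $G^{dd}\subset T^{-1}H=G$ and hence $G=G^{dd}$, is correct and is not even in the paper, which instead routes (v) back through (i)), and (iii)$\Rightarrow$(i) by factoring through the surjective case is exactly the paper's argument in substance. The genuine gap is your arrow \emph{into} (iii): you have no proof that the remaining conditions force $TF$ to be regular in $E$, and you explicitly leave that step to the composition-operator machinery. That is a mis-prediction of the paper's structure: the paper defers only (v)$\Rightarrow$(i) and (vi)$\Rightarrow$(i) to the section on composition operators (via the Kakutani representation, Theorem \ref{com} and Lemma \ref{dens}); the regularity of $TF$ is proved right away, abstractly, as part of (ii)$\Rightarrow$(iii). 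Moreover the obstruction you correctly identified --- that a lower bound $e$ of $\left\{Tf_{\alpha}\right\}$ need not lie in $TF$ --- is precisely what the paper's argument circumvents: to prove regularity one takes $G\subset TF$ with $g=\sup_{TF}G$, and this supremum \emph{does} lie in $TF$, so it has a preimage $f$ with $Tf=g$; one then sets $B=\left(T^{-1}G\right)\cap\left[0_{F},f\right]$, uses Remark \ref{homo} to get $TB=G$, shows $f=\sup B$ by the iteration $B+n\left(f-h\right)\subset\left[0_{F},f\right]$ together with the Archimedean property, and finally applies (ii). Without this (or some substitute) your proposal never establishes that (i), (ii), (iv), (v) imply (iii), so the equivalence with (iii) is missing; and trying to extract global regularity of $TF$ from the local (principal-ideal) composition-operator representation is not straightforward and is nowhere carried out.

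Two smaller points. First, your reliance on \cite[Theorem 18.13]{zl} for (iii)$\Rightarrow$(i) is legitimate; the paper simply reproves that case inline (via $g_{i}=f-f\wedge f_{i}$ and the band property of $\Ker T$). Second, your treatment of (vi) is only a stated expectation; the actual argument in the paper is a specific construction (collapsing a nowhere dense set via a quotient map $\psi$, Lemma \ref{dens}, and parts (i) and (iv) of Theorem \ref{com}) producing an order dense sublattice whose image is not regular, together with the observation that $\dim TF_{f}=\8$ guarantees an accumulation point in the image. Deferring (v)$\Rightarrow$(i) and (vi)$\Rightarrow$(i) is consistent with the paper's plan, but as written your proposal leaves a real hole at (iii) that the paper fills with an elementary argument you did not find.
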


\textbf{We will only partially prove the theorem here. }Namely, we will prove equivalence of the conditions (i)$\Leftrightarrow$(ii)$\Leftrightarrow$(iii), as well as (i)$\Rightarrow$(iv)$\Rightarrow$(v)$\Rightarrow$(v') and (i)$\Rightarrow$(vi)$\Rightarrow$(vi'), while (v')$\Rightarrow$(i) and (vi')$\Rightarrow$(i) require certain additional machinery from the upcoming sections. Also note that (vi)$\Rightarrow$(i) does not hold without an additional assumption, since there are homomorphisms from vector lattices into $\R$ which are not order continuous.

\begin{proof}
First, note that (ii)$\Rightarrow$(i), (v)$\Rightarrow$(v') and (vi)$\Rightarrow$(vi') are trivial, while (i)+(iii)$\Rightarrow$(vi) follows from the fact that a restriction of an order continuous operator to a regular sublattice is order continuous.

(i)$\Rightarrow$(ii): For $G\subset F$ the collection $G^{\vee}$ of all finite supremums of elements of $G$ is an increasing net with the same supremum (which exists or not for $G$ and $G^{\vee}$ simultaneously). It is easy to see that $TG^{\vee}=\left(TG\right)^{\vee}$, and so if $g=\sup G$, from order continuity we have $Tg=\sup TG^{\vee}=\sup \left(TG\right)^{\vee}=\sup TG$.\medskip

(ii)$\Rightarrow$(iii): We know that $\Ker T$ is an ideal, and if $\Ker T \ni f_{\gamma}\uparrow f$, then $0_{E}=Tf_{\gamma}\uparrow Tf$, from where $Tf=0_{E}$, and so $\Ker T$ is a band. Let us show that $H=TF$ is regular in $F$. Let $G\subset H_{+}$ and $g=\sup_{H} G\in H_{+}$. Let $f\in F_{+}$ be such that $Tf=g$ and let $B=\left(T^{-1} G\right)\cap \left[0_{F},f\right]$. From Remark \ref{homo} $TB=G$, and so once we show that $f=\sup B$, from (ii) we will get $g=Tf=\sup_{E}G$.\medskip

Let $h\in \left[0_{F},f\right]$ be such that $B\le h$. Since $Th\in H$ and $\sup_{H}G=g=Tf\ge Th\ge TB=G$, we get $Th=g=Tf$. Since $0_{F}\le h\le f$ and $B\le h$, it follows that $B+f-h\subset \left[0_{F},f\right]$. At the same time since $Th=Tf$, we have $T\left(b+f-h\right)=Tb\in G$, for any $b\in B$. Hence, $B+f-h\subset B$, and iterating this yields $B+n\left(f-h\right)\subset B\subset \left[0_{F},f\right]$, for every $n\in\N$. As $B$ is nonempty, this implies that $n\left(f-h\right)\le f$, for every $n\in\N$. Since $F$ is Archimedean it follows that $f\le h$, and so $f=h$.\medskip

(iii)$\Rightarrow$(i): Since $TF$ is regular, it is enough to show that if $f_{\gamma}\downarrow 0_{F}$ then $Tf_{\gamma}\downarrow_{TF} 0_{E}$, and so we may assume that $T$ is a surjection. Suppose that $Tf_{\gamma}\ge e\ge 0_{E}$, for every $\gamma$. Since $T$ is a surjective homomorphism, there is $f\in F_{+}$ such that $Tf=e$. For $\gamma\in \Gamma$ let $g_{\gamma}=f-f\wedge f_{\gamma}$; we have $f_{\gamma}\ge f- g_{\gamma}\ge 0_{F}$, from where $0_{F}=\inf \limits_{\gamma\in \Gamma} f_{\gamma}\ge \inf\limits_{\gamma\in \Gamma} \left(f-g_{\gamma}\right)\ge 0_{F}$, and so $0_{F}=\inf\limits_{\gamma\in \Gamma} \left(f-g_{\gamma}\right)=f-\sup\limits_{\gamma\in \Gamma} g_{\gamma}$. On the other hand, $Tg_{\gamma}=e-e\wedge Tf_{\gamma}=0_{E}$, for every $\gamma\in \Gamma$, from where $g_{\gamma}\in \Ker T$, and so $f=\sup\limits_{\gamma\in \Gamma} g_{\gamma}\in \Ker T$. Thus, $e=Tf=0_{E}$, and so $Tf_{\gamma}\downarrow 0_{E}$.\medskip

(i)$\Rightarrow$(iv): It is clear that $T^{-1}H$ is an ideal in $F$. If $f_{\gamma}\uparrow f$, where $f_{\gamma}\in T^{-1}H$, then from order continuity $Tf_{\gamma}\uparrow Tf$, and $Tf_{\gamma}\in H$. Since $H$ is a band, it contains all supremums, and so $Tf\in H$, from where $f\in T^{-1}H$. Thus, $T^{-1}H$ contains all supremums, and so it is a band.\medskip

(iv)$\Rightarrow$(v): Since $\left(TG\right)^{dd}$ is a band in $E$ that contains $TG$, it follows from our assumption that $T^{-1}\left(TG\right)^{dd}$ is a band in $F$ that contains $T^{-1}TG\supset G$. Hence $T^{-1}\left(TG\right)^{dd}$ contains $G^{dd}$, and so $T\left(G^{dd}\right)\subset TT^{-1}\left(TG\right)^{dd}\subset \left(TG\right)^{dd}$.
\end{proof}

\begin{remark}\label{arch}
Note that the only implication in Corollary \ref{oc} that requires the Archimedean property is (iv)$\Rightarrow$(i), and the only established implication in Theorem \ref{hoc} that requires the Archimedean property is (ii)$\Rightarrow$(iii). Moreover, in the absence of this property the latter implication still holds if we additionally assume that $\Ker T$ is a projection band. Indeed, then for $H=\left(\Ker T\right)^{d}$ the restriction $\left.T\right|_{H}$ is an isomorphism of $H$ onto $TF$, from where the inclusion of $TF$ into $E$ is a composition $T\left.T\right|_{H}^{-1}$ of order continuous operators.
\qed\end{remark}

\begin{question} Does the implication (vi)$\Rightarrow$(i) in Theorem \ref{hoc} hold under the mere assumption that $\dim TF=\8$?
\end{question}

\begin{question} Is there a version of the equivalency (i)$\Leftrightarrow$(v) in Theorem \ref{hoc} for positive operators?
\end{question}

The conditions in Theorem \ref{hoc} can be translated into the conditions of regularity of a sublattice of an Archimedean vector lattice.

\begin{corollary}\label{reg}For a sublattice $E$ of $F$ the following conditions are equivalent:
\item[(i)] $E$ is regular;
\item[(ii)] $E\cap H$ is regular in $H$, for every ideal $H$ of $F$;
\item[(iii)] There is a majorizing $G\subset E_{+}$ such that $E\cap F_{g}$ is regular in $F_{g}$, for every $g\in G$;
\item[(iv)] $E\cap H$ is a band in $E$, for every band $H$ in $F$;
\item[(v)] $G^{dd}_{F}\cap E= G^{dd}_{E}$, for every $G\subset E$;
\item[(v')] $G^{dd}_{F}\cap E= G^{dd}_{E}$, for every ideal $G\subset E$;
\item[(vi)] There is an order dense and majorizing sublattice $H$ of $E$ which is regular in $F$.
\end{corollary}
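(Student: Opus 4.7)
The plan is to deduce the corollary by applying Theorem \ref{hoc} and Corollary \ref{oc} to the inclusion homomorphism $\iota: E \hookrightarrow F$, since (as recalled at the beginning of Section \ref{d}) a sublattice $E$ is regular in $F$ exactly when $\iota$ is order continuous. Thus condition (i) is Theorem \ref{hoc}(i) for $\iota$ by definition, and the remaining conditions are obtained by routine translation.

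Indeed, condition (iv) becomes Theorem \ref{hoc}(iv) for $\iota$ via $\iota^{-1}(H) = E \cap H$ (the kernel condition is vacuous since $\iota$ is injective). For condition (v), Proposition \ref{dis} always supplies $G^{dd}_F \cap E \subset G^{dd}_E$, so the stated equality reduces to the reverse inclusion $G^{dd}_E \subset G^{dd}_F$, which is precisely Theorem \ref{hoc}(v) for $\iota$. For condition (iii), the principal ideal of $E$ generated by $g \in E_+$ coincides with $E \cap F_g$; since $F_g$ is itself an ideal of $F$, Remark \ref{tran} equates regularity of $E \cap F_g$ in $F_g$ with its regularity in $F$, which is just order continuity of $\iota|_{E_g}$. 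So (iii) matches Corollary \ref{oc}(iii) for $\iota$. Finally, (vi) is Corollary \ref{oc}(iv) for $\iota$, with the Archimedean hypothesis on $E$ being exactly what Remark \ref{arch} identifies as the ingredient needed for the implication (iv)$\Rightarrow$(i) of Corollary \ref{oc} when the domain is $E$.

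Condition (ii) is not a direct translation, but the equivalence with (i) is straightforward. Taking $H = F$ yields (ii)$\Rightarrow$(i). Conversely, assume (i) and fix an ideal $H$ of $F$; by Remark \ref{menag}, $E \cap H$ is an ideal in $E$. For a net $g_i \downarrow 0$ in $E \cap H$, regularity of the ideal $E \cap H$ in $E$ pushes the infimum to $0$ in $E$, regularity of $E$ in $F$ pushes it to $0$ in $F$, and regularity of the ideal $H$ in $F$ brings it into $0$ in $H$. The only subtleties in the whole argument are the appeals to Proposition \ref{dis} (for (v)) and Remark \ref{tran} (for (iii)) to reconcile the stated forms with those delivered by the general theorem; otherwise the result depends only on the full conclusion of Theorem \ref{hoc}, whose proof is completed in later sections of the paper.
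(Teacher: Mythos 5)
Your proposal is correct and takes essentially the same route as the paper: regularity is rephrased as order continuity of the inclusion $E\hookrightarrow F$, conditions (iii)--(vi) are read off from Corollary \ref{oc} and Theorem \ref{hoc} (with Proposition \ref{dis} and Remark \ref{tran} reconciling the formulations, exactly as the paper intends), and (i)$\Leftrightarrow$(ii) is handled directly via Remark \ref{menag} plus transitivity of regularity. The only cosmetic slip is the aside attributing a ``kernel condition'' to Theorem \ref{hoc}(iv) (that clause belongs to \ref{hoc}(iii)), and the final appeal to regularity of $H$ in your (i)$\Rightarrow$(ii) chain is unnecessary since an infimum equal to $0_{F}$ automatically passes down to any sublattice containing the net; neither affects correctness.
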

\begin{proof}
Equivalence of (i), (iii) and (vi) follows from Corollary \ref{oc}, while equivalence of (i), (iv), (v) and (v') follows from Theorem \ref{hoc}. (ii)$\Rightarrow$(i) is trivial, while the converse follows from the fact that $E\cap H$ is an ideal in $E$, therefore it is regular in $E$, from where it is regular in $F$, and so in $H$ (see remarks \ref{tran} and \ref{menag}).
\end{proof}

Again, we caution that only (vi)$\Leftrightarrow$(i)$\Leftrightarrow$(ii)$\Leftrightarrow$(iii)$\Rightarrow$(iv)$\Rightarrow$(v)$\Rightarrow$(v') has been proven at this point, but apart from (vi)$\Rightarrow$(i) none of these implications require the Archimedean property, due to Remark \ref{arch}.

\section{Preliminaries on $\Co\left(X\right)$}\label{x}

In this section we gather various well known results and concepts related to the lattices of continuous functions. Everywhere in this section $X$ is (at least) a Tychonoff topological space.

We denote the space of all continuous functions on $X$ by $\Co\left(X\right)$. This space is a vector lattice with respect to the pointwise operations, and so it is a sublattice of the vector lattice $\Fo\left(X\right)$ of all real-valued functions on $X$. It is easy to see that these lattices are Archimedean. The interval $\left[f,g\right]$, where $f,g\in \Fo\left(X\right)$ will be meant in $\Co\left(X\right)$ as a sublattice of $\Fo\left(X\right)$, i.e. all continuous functions between (not necessarily continuous) $f$ and $g$. We will denote the function which is identically $0$ on $X$ as $\0$, while $\1_{A}$ is the indicator of $A\subset X$, i.e. the function whose value is $1$ on $A$ and $0$ on $X\backslash A$. If $X$ is clear from the context, we put $\1=\1_{X}$.

The space $\Co\left(X\right)$ is by default endowed with the compact-open topology. One can show that this topology is locally convex-solid. Let $\Co_{b}\left(X\right)$ stand for the Banach lattice of all bounded continuous functions on $X$ endowed with the supremum norm $\|\cdot\|$. The closed unit ball of $\Co_{b}\left(X\right)$ is $\left[-\1,\1\right]$. It is easy to see that $f\in \Co_{b}\left(X\right)$ is a strong unit if and only if $f\ge\delta\1$, for some $\delta>0$.

If $Y$ is another Tychonoff space and $\varphi:X\to Y$ is a continuous map, the \emph{composition operator} $C_{\varphi}:\Co\left(Y\right)\to \Co\left(X\right)$ is defined by $C_{\varphi}f=f\circ\varphi$. It is easy to see that $C_{\varphi}$ is a continuous homomorphism. Even though formally the analogously defined composition operator from $\Co_{b}\left(Y\right)$ into $\Co_{b}\left(X\right)$ is distinct from $C_{\varphi}$, we will still denote it by $C_{\varphi}$ unless there is a risk of confusion. A special case of a composition operator is the restriction operator, i.e. the case when $X\subset Y$ and $\varphi$ is the inclusion map. It is a standard fact that if $Y=\beta X$ is the Stone-Cech compactification of $X$, then $C_{\varphi}$ is an isomorphism from $\Co\left(\beta X\right)$ onto $\Co_{b}\left(X\right)$. Another important class of operators is formed by \emph{multiplication operators} of the form $M_{f}:\Co\left(X\right)\to \Co\left(X\right)$, where $f\in \Co\left(X\right)$, and is defined by $M_{f}g=fg$. It is easy to see that if $f\ge\0$, then $M_{f}$ is a continuous homomorphism.\medskip

By definition of the Tychonoff space, for every open $U\subset X$ and $x\in U$, there is $f\in \left[\1_{\left\{x\right\}},\1_{U}\right]$. Equivalently, $\sup_{\Fo\left(X\right)} \Co\left(X\right)\cap \left[\0,\1_{U}\right]=\1_{U}$. Moreover, if $K$ is compact, or if $X$ is normal and $K\subset U$ is closed, Tietze-Urysohn theorem guarantees that there is $f\in \left[\1_{K},\1_{U}\right]$ (see \cite[theorems 2.1.8 and 3.1.7]{engelking}). It turns out we can achieve this ``richness'' even for some sublattices of $\Co\left(X\right)$.

\begin{proposition}[Sublattice Urysohn lemma]
Let $E$ be a dense sublattice of $\Co_{b}\left(X\right)$ that contains $\1$, let $U\subset X$ be open and let $K\subset U$ be compact. Then $E\cap \left[\1_{K},\1_{U}\right]\ne\varnothing$. If additionally $X$ is normal, then $K$ can be assumed to be merely closed. In this case $E$ contains all simple continuous functions.
\end{proposition}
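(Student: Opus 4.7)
The plan is to produce, via Tietze--Urysohn, a continuous function in $[\1_K,\1_U]$, approximate it in sup-norm by some element of the dense sublattice $E$, and then use the lattice operations together with the unit $\1\in E$ to ``clip'' the approximant into an element of $E\cap [\1_K,\1_U]$.

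First, since either $K$ is compact or $X$ is normal and $K$ is closed, the discussion immediately preceding the statement yields $h\in \Co_{\8}(X)$ with $\1_K\le h\le \1_U$; in particular $\0\le h\le \1$, $h\equiv 1$ on $K$, and $h\equiv 0$ off $U$. Second, using density of $E$ in $(\Co_{\8}(X),\|\cdot\|_{\8})$, I would choose $e\in E$ with $\|h-e\|_{\8}<\tfrac{1}{3}$, and form the candidate
$$f:=(3e-\1)^{+}\wedge \1.$$
Since $E$ is a linear sublattice containing $\1$, we have $3e-\1\in E$, and therefore $f\in E$.

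The verification is a direct pointwise check: on $K$ one has $e>\tfrac{2}{3}$, so $3e-\1>\1$ and hence $f=\1$; off $U$ one has $e<\tfrac{1}{3}$, so $3e-\1<\0$ and hence $(3e-\1)^{+}=\0=f$; and everywhere $\0\le f\le \1$. Thus $f\in E\cap [\1_K,\1_U]$, establishing the first assertion.

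For the final claim, a continuous simple function has the form $s=\sum_{i=1}^{n}c_i\1_{A_i}$ with the $A_i$ pairwise disjoint clopen subsets of $X$ (continuity forces each $A_i$ on which $c_i\ne 0$ to be clopen). Applying the first part with $K=U=A_i$ gives $\1_{A_i}\in E$ for every $i$, and linearity of $E$ yields $s\in E$. I do not anticipate any real obstacle: the single essential ingredient beyond Tietze--Urysohn and density is the presence of $\1\in E$, which is used both to form $3e-\1$ within $E$ and to clip from above by $\1$; without $\1$ the truncation need not land in $[\0,\1]$.
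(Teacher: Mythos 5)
Your proof is correct and follows essentially the same route as the paper: approximate a Tietze--Urysohn function within $\tfrac13$ by $e\in E$ and clip via $(3e-\1)^{+}\wedge\1$, then reduce simple continuous functions to indicators of clopen sets obtained from the first part. The only difference is that you verify the inclusion in $[\1_{K},\1_{U}]$ pointwise while the paper does it by lattice inequalities, which is immaterial.
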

\begin{proof}
From Tietze-Urysohn theorem there is $g\in \Co\left(X\right)$ such that $\1_{K}\le g \le \1_{U}$. Since $E$ is dense, there is $h\in E$ such that $\|h-g\|\le\frac{1}{3}$. The latter is equivalent to $-\frac{1}{3}\1\le h-g\le \frac{1}{3}\1$, or $3g-2 \1\le 3 h-\1 \le 3g$. Let $f= \left(3 h-\1\right)^{+}\wedge \1\in E$. Then $f\le 3 g^{+}\wedge \1\le 3 \1_{U}\wedge \1=\1_{U}$, and simultaneously $f\ge \left(3g-2 \1\right)^{+}\wedge \1\ge \left(3\1_{K}-2 \1\right)^{+}\wedge \1=\1_{K}\wedge \1=\1_{K}$.

If $X$ is normal and $K$ is a clopen set, then $\left[\1_{K},\1_{K}\right]=\left\{\1_{K}\right\}$ intersects with $E$, and so $\1_{K}\in E$. Since any simple continuous function is a linear combination of indicators of clopen sets, it follows that $E$ contains all simple continuous functions.
\end{proof}

Every dense sublattice $E$ of $\Co_{b}\left(X\right)$ is unital, since there is $e\in E$ such that $\|\1-e\|<\frac{1}{2}$, from where $e\ge \frac{1}{2}\1$. Also, the norm of $M_{f}$ on $\Co_{b}\left(X\right)$ is equal to $\|f\|$, and so $M_{f}$ is a continuous automorphism of $\Co_{b}\left(X\right)$ if and only if $f$ is a strong unit. This leads to the following generalization of the result above.

\begin{corollary}\label{sul}If $E$ is a dense sublattice of $\Co_{b}\left(X\right)$, then $E\cap \left[\1_{K},\delta\1_{U}\right]\ne\varnothing$, for every open $U\subset X$, compact $K\subset U$ (if $X$ is normal, it suffices to assume that $K$ is closed) and $\delta>1$.
\end{corollary}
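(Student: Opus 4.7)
The plan is to reduce to the Sublattice Urysohn lemma by passing through a multiplicative isomorphism that forces $\1$ into our sublattice. Given $\delta>1$, first choose $\eta\in(0,1)$ small enough that $\frac{1+\eta}{1-\eta}\le \delta$. By density of $E$ in $\Co_{\8}(X)$, pick $e\in E$ with $\|e-\1\|_{\8}<\eta$; then $(1-\eta)\1\le e\le (1+\eta)\1$, so in particular $e$ is a strong unit of $\Co_{\8}(X)$.

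Since $e>\0$ is a strong unit, the remarks immediately preceding the corollary say that $M_{1/e}:\Co_{\8}(X)\to \Co_{\8}(X)$ is a continuous lattice automorphism, with inverse $M_{e}$. Therefore $E':=M_{1/e}(E)$ is again a dense sublattice of $\Co_{\8}(X)$, and now $\1=M_{1/e}(e)\in E'$. I would then apply the Sublattice Urysohn lemma to $E'$: for the prescribed open $U$ and compact (or, in the normal case, merely closed) $K\subset U$, there exists $f\in E'\cap [\1_{K},\1_{U}]$.

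Write $f=h/e$ with $h\in E$. Multiplying the inequalities $\1_{K}\le h/e\le \1_{U}$ by $e$ and using $(1-\eta)\1\le e\le (1+\eta)\1$, we obtain $(1-\eta)\1_{K}\le h\le (1+\eta)\1_{U}$. Setting $g=\frac{1}{1-\eta}h\in E$ (sublattices are closed under scalar multiplication), this gives
$$\1_{K}\le g\le \frac{1+\eta}{1-\eta}\1_{U}\le \delta\1_{U},$$
so $g\in E\cap [\1_{K},\delta\1_{U}]$, as required.

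There is no serious obstacle here: the whole argument is a reduction to the unital case already handled by the Sublattice Urysohn lemma, and the only genuine idea is that the multiplication operator $M_{1/e}$ is the correct gadget for converting an \emph{approximately} unital dense sublattice into a genuinely unital one. The factor $\delta>1$ is forced by precisely the distortion $\frac{1+\eta}{1-\eta}$ coming from $e$ being only $\eta$-close to $\1$, which explains why this reduction argument cannot give $\delta=1$ without extra hypotheses on $E$.
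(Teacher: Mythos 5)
Your proof is correct and follows essentially the same route as the paper: approximate $\1$ by a strong unit $e\in E$, use the multiplication automorphism to pass to the unital dense sublattice $M_{1/e}E=M_e^{-1}E$, apply the Sublattice Urysohn lemma there, and multiply back by $e$ and rescale, with the distortion $\frac{1+\eta}{1-\eta}$ absorbed by $\delta>1$. The paper's argument is the same computation written with $F=M_f^{-1}E$ and $fg$ in place of your $E'$ and $h$.
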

\begin{proof}
Fix $\varepsilon\in\left(0,1\right)$ and let $f\in E$ be such that $\|\1-f\|<\varepsilon$, and so $\left(1-\varepsilon\right)\1\le f\le \left(1+\varepsilon\right)\1$. Then, $M_{f}$ is an automorphism of $\Co_{b}\left(X\right)$, from where $F=M_{f}^{-1}E$ is a dense sublattice of $\Co_{b}\left(X\right)$ that contains $\1$. From Sublattice Urysohn lemma there is $g\in F\cap \left[\1_{K},\1_{U}\right]$. Therefore, $fg\in E$ satisfies $\left(1-\varepsilon\right)\1_{K}\le fg\le \left(1+\varepsilon\right)\1_{U}$, from where $\1_{K}\le \frac{1}{1-\varepsilon}fg\le \frac{1+\varepsilon}{1-\varepsilon}\1_{U}$. Taking $\varepsilon$ such that $\frac{1+\varepsilon}{1-\varepsilon}\le\delta$ completes the proof.
\end{proof}

We will see below that the statement of Corollary \ref{sul} in general does not hold for dense sublattices of $\Co\left(X\right)$. On the other hand, since the set of restrictions of elements of such sublattices to a compact set $K\subset X$ forms a dense sublattice of $\Co\left(K\right)$, we get the following result.

\begin{corollary}\label{ssul}
Let $E$ be a dense sublattice of $\Co\left(X\right)$ and let $K,L\subset X$ be compact and disjoint. Then, for every $\delta>1$ there is $f\in E_{+}$ which vanishes on $L$ and such that $f\left(K\right)\subset \left[1,\delta\right]$.
\end{corollary}

A \emph{point evaluation} $\delta_{x}$ at $x\in X$ is the linear functional on $\Co\left(X\right)$ defined by $\delta_{x}\left(f\right)=f\left(x\right)$. If $E$ is a subspace of $\Co\left(X\right)$ we denote the restriction of $\delta_{x}$ to $E$ by $\delta_{x}^{E}$. We will say that a subspace $E$ of $\Co\left(X\right)$:
\begin{itemize}
\item \emph{vanishes} at $x\in X$ if $f\left(x\right)=0$, for every $f\in E$, i.e. $\delta_{x}^{E}=0_{E^{*}}$;
\item \emph{separates} $x,y\in X$ if there is $f\in E$ such that $f\left(x\right)\ne f\left(y\right)$, i.e. $\delta_{x}^{E}\ne \delta_{y}^{E}$;
\item \emph{strictly separates} $x,y\in X$ if there are $f,g\in E$ such that $f\left(x\right)=0$, $f\left(y\right)=1$, $g\left(x\right)=1$ and $g\left(y\right)=0$, i.e. $\delta_{x}^{E}$ and $\delta_{y}^{E}$ are not linearly dependent;
\item \emph{(strictly) separates points [of $Y\subset X$]} if it (strictly) separates every pair of distinct points [from $Y$];
\item \emph{has only simple constraints} if whenever $\delta_{x}^{E}$ and $\delta_{y}^{E}$ are linearly dependent, either one of them is $0_{E^{*}}$, or $\delta_{x}^{E}= \delta_{y}^{E}$.
\end{itemize}

We will need the following version of the Stone-Weierstrass theorem.

\begin{theorem}Let $E$ be a sublattice of $\Co\left(X\right)$. Then $f\in \Co\left(X\right)$ belongs to $\overline{E}$ if and only if for every $x,y\in X$ there is $g\in E$ such that $f\left(x\right)=g\left(x\right)$ and $f\left(y\right)=g\left(y\right)$. In particular, a sublattice $F$ of $\Co\left(X\right)$ is dense if and only if it strictly separates points.
\end{theorem}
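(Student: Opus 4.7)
The plan is to prove both directions of the theorem separately and then read off the corollary.

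Necessity is a short linear-algebra observation. For any $x,y\in X$ the map $\Phi_{x,y}:E\to\R^{2}$, $g\mapsto\left(g\left(x\right),g\left(y\right)\right)$, is linear, so its image $\Phi_{x,y}\left(E\right)$ is a linear subspace of $\R^{2}$, and as such it is closed. The compact-open topology on $\Co\left(X\right)$ dominates pointwise convergence, so $f\in \overline{E}$ forces $\left(f\left(x\right),f\left(y\right)\right)$ to lie in the closure of $\Phi_{x,y}\left(E\right)$, hence in $\Phi_{x,y}\left(E\right)$ itself; this delivers a $g\in E$ with $g\left(x\right)=f\left(x\right)$ and $g\left(y\right)=f\left(y\right)$.

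For sufficiency I would run the classical lattice Stone--Weierstrass construction. Given $f$ satisfying the hypothesis, a compact $K\subset X$, and $\varepsilon>0$, I need $g\in E$ with $\sup_{K}\left|g-f\right|<\varepsilon$. For each pair $\left(x,y\right)\in K\times K$ pick $g_{xy}\in E$ matching $f$ at both $x$ and $y$. With $x$ fixed, the sets $\left\{z\in K:~ g_{xy}\left(z\right)<f\left(z\right)+\varepsilon\right\}$ are open in $K$ and each contains $y$, so they cover $K$; a finite subcover together with the pointwise infimum of the corresponding finitely many $g_{xy}$ produces $g_{x}\in E$ with $g_{x}\left(x\right)=f\left(x\right)$ and $g_{x}<f+\varepsilon$ throughout $K$. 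Next, the sets $\left\{z\in K:~ g_{x}\left(z\right)>f\left(z\right)-\varepsilon\right\}$ are open in $K$ and each contains $x$, so they cover $K$; a finite subcover together with the pointwise supremum of the corresponding $g_{x}$ delivers the desired $g$. The sublattice hypothesis keeps every intermediate expression inside $E$, and the twin compactness argument is the only real technical step; I expect verifying cleanly that the strict inequalities are preserved under finite $\wedge$ and $\vee$ to be the main obstacle to make rigorous.

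For the corollary, if $F$ is dense and $x\ne y$, the Tychonoff property produces $f\in \Co\left(X\right)$ with $f\left(x\right)=0$ and $f\left(y\right)=1$, and the theorem furnishes $g\in F$ agreeing with $f$ at these two points; exchanging the roles of $x$ and $y$ gives the second function, so $F$ strictly separates. Conversely, if $F$ strictly separates points, then for every $x\ne y$ the image $\Phi_{x,y}\left(F\right)$ contains both $\left(0,1\right)$ and $\left(1,0\right)$ and therefore equals $\R^{2}$, while for $x=y$ strict separation against any other point supplies an element not vanishing at $x$, so $\Phi_{x,x}\left(F\right)=\R$; consequently every $f\in \Co\left(X\right)$ satisfies the matching hypothesis, and by the theorem $\overline{F}=\Co\left(X\right)$.
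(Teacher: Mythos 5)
Your proposal is correct and follows essentially the same route as the paper: both directions hinge on continuity of the point evaluations for necessity, and on uniform approximation over an arbitrary compact $K\subset X$ via the lattice version of Stone--Weierstrass for sufficiency. The only difference is that where the paper restricts $E$ to $K$ and cites the compact-case prelemma from Kelley--Namioka, you write out that standard double covering argument with finite infima and suprema yourself, which is fine since the sublattice hypothesis and finiteness of the covers keep all the strict inequalities and memberships in $E$ intact.
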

\begin{proof}
Necessity: Point evaluations are continuous linear functionals, and so they are (not) linearly independent on $E$ and $\overline{E}$ simultaneously. Hence, any system of two linear equations has (not) solution in $E$ and $\overline{E}$ simultaneously.

Sufficiency: Let $K\subset X$ be compact and let $\varepsilon>0$. Let $F$ be the set of restrictions of elements of $E$ to $K$. From the appropriate version of Stone-Weierstrass theorem for (not necessarily linear) sublattices of $\Co\left(K\right)$ (see \cite[Theorem 16.5.5]{bn}), there is $g\in F$ such that $\|\left.f\right|_{K}-g\|<\varepsilon$. Let $h\in E$ be such that $g=\left.h\right|_{K}$. We have $\|\left.f\right|_{K}-\left.h\right|_{K}\|<\varepsilon$, and since $K$ and $\varepsilon$ were chosen arbitrarily, the result follows.
\end{proof}

Note that in particular $\Co_{b}\left(X\right)$ is dense in $\Co\left(X\right)$.\medskip

The Stone-Weierstrass theorem essentially states that $\overline{E}$ is the maximal set which satisfies the same constraints as $E$. This allows to obtain the following description of the closed sublattices of $\Co\left(X\right)$.

\begin{corollary}\label{subl}For every closed sublattice $E$ of $\Co\left(X\right)$ there is a closed $Y\subset X$ and a collection of triples $\left\{\left(x_{i},z_{i},\alpha_{i}\right)\right\}_{i\in I}\subset \left(X\backslash Y\right)^{2}\times \left(0,+\8\right)$ such that $$E=\left\{f\in \Co\left(X\right),~ \forall y\in Y:~ f\left(y\right)=0,~ \forall i\in I:~ f\left(x_{i}\right)=\alpha_{i}f\left(z_{i}\right)\right\}.$$
\end{corollary}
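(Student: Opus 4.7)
The plan is to read off the constraint set $Y$ and the scalar relations directly from the restricted point evaluations $\delta_x^{E}$, and then invoke the preceding Stone-Weierstrass theorem to show that these are the only constraints defining $E$.

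First I would set $Y=\{y\in X:\ \delta_y^{E}=0_{E^{*}}\}$, i.e.\ the common zero set of $E$. Since $Y=\bigcap_{f\in E}f^{-1}(0)$ and each $f$ is continuous, $Y$ is automatically closed. For any $x,z\in X\setminus Y$ both $\delta_x^{E}$ and $\delta_z^{E}$ are nonzero; whenever they are linearly dependent there is a unique scalar $\alpha$ with $\delta_x^{E}=\alpha\delta_z^{E}$, and I would check $\alpha>0$ as follows. Since $E$ is a sublattice, $|f|\in E_{+}$ for every $f\in E$; picking $f$ with $f(z)\neq 0$ gives $|f|(z)>0$ and $|f|(x)=\alpha|f|(z)\geq 0$, whence $\alpha\geq 0$, while $\delta_x^{E}\neq 0$ forces $\alpha\neq 0$. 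Collecting all such triples yields the index set $I\subset(X\setminus Y)^{2}\times(0,+\infty)$, and by construction $E$ is contained in the set $E'$ defined on the right-hand side of the stated identity.

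For the reverse inclusion $E'\subset E=\overline{E}$ I would apply the Stone-Weierstrass theorem cited above. Given $f\in E'$ and arbitrary $x,y\in X$, the task is to produce $g\in E$ with $g(x)=f(x)$ and $g(y)=f(y)$. I would consider the image $V\subset\R^{2}$ of the linear map $g\mapsto(g(x),g(y))$ on $E$ and case-split on $\{\delta_x^{E},\delta_y^{E}\}$. If these functionals are linearly independent, then $V=\R^{2}$ and any value is attained. If $x\in Y$, then $f(x)=0$ by definition of $E'$ and $V\subset\{0\}\times\R$; the point $(0,f(y))$ lies in $V$ using any $h\in E$ with $h(y)\neq 0$ (or $f(y)=0$ too if $y\in Y$); the symmetric case is analogous. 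Finally, if both functionals are nonzero but proportional, then $\delta_x^{E}=\alpha\delta_y^{E}$ places the triple $(x,y,\alpha)$ in $I$ (when $x\neq y$; the case $x=y$ is trivial), so the constraint $f(x)=\alpha f(y)$ built into $E'$ places $(f(x),f(y))$ on the line $V$.

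The only non-routine point is the positivity $\alpha>0$, which is where the lattice structure of $E$ (rather than just closedness as a linear subspace) genuinely enters; everything else is a bookkeeping case analysis plus a direct application of Stone-Weierstrass.
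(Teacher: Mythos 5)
Your proof is correct and follows exactly the route the paper intends: the corollary is stated as a direct consequence of the preceding Stone--Weierstrass theorem, with $Y$ the common zero set and the triples recording the proportionality relations $\delta_{x}^{E}=\alpha\delta_{z}^{E}$, the positivity of $\alpha$ coming from the lattice structure via $\left|f\right|\in E$. Your case analysis for the reverse inclusion fills in precisely the bookkeeping the paper leaves to the reader.
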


Let $E$ be a sublattice of $\Co\left(X\right)$. For $A\subset X$ define $E_{A}=\left\{f\in E,~\forall x\in A:~ f\left(x\right)=0\right\}$. Clearly, $E_{A}$ is an ideal in $E$. Since elements of $E$ are continuous, it follows that $E_{A}=E_{\overline{A}}$. If $B\subset X$, then $E_{A\cup B}=E_{A}\cap E_{B}$, and if $A\subset B$, then $E_{B}\subset E_{A}$. Observe that $f,g\in \Co\left(X\right)$ are disjoint if and only if $f^{-1}\left(\R\backslash \left\{0\right\}\right)\cap g^{-1}\left(\R\backslash \left\{0\right\}\right)=\varnothing$. Since disjointness is inherited by sublattices, it follows that $\left\{f\right\}^{d}=E_{\overline{X\backslash f^{-1}\left(0\right)}}$, for $f\in E$, and if $G\subset E$, then $G^{d}= E_{\overline{X\backslash \bigcap\limits_{f\in G} f^{-1}\left(0\right)}}$.\medskip

It is easy to see that every ideal of $\Co\left(X\right)$ strongly separates any pair of distinct points on which it does not vanish. This observation together with Corollary \ref{subl} leads the following description of the closed ideals in $\Co\left(X\right)$.

\begin{corollary}\label{id}For every closed ideal $E$ of $\Co\left(X\right)$ there is a closed $A\subset X$ such that $E=\Co\left(X\right)_{A}$.
\end{corollary}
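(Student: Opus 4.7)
The plan is to let $A := \bigcap_{f\in E} f^{-1}\left(0\right)$, which is closed as an intersection of closed sets (elements of $E$ being continuous). The inclusion $E\subset \Co\left(X\right)_{A}$ holds by definition of $A$, so the work is in the reverse inclusion.

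To get it, I would apply Corollary \ref{subl} to the closed sublattice $E$, obtaining a closed $Y\subset X$ and a family of triples $\left\{\left(x_{i},z_{i},\alpha_{i}\right)\right\}_{i\in I}\subset \left(X\backslash Y\right)^{2}\times \left(0,+\8\right)$ such that $E=\left\{f\in \Co\left(X\right):~ f|_{Y}=0,~ f\left(x_{i}\right)=\alpha_{i}f\left(z_{i}\right)\ \forall i\right\}$. Since every $f\in E$ vanishes on $Y$, we get $Y\subset A$ automatically. The key step is to show that for each $i$ one has $x_{i},z_{i}\in A$, so that the linear constraints are satisfied vacuously by any $f\in \Co\left(X\right)_{A}$.

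The argument for that goes as follows. The relation $f\left(x_{i}\right)=\alpha_{i}f\left(z_{i}\right)$ for all $f\in E$ says $\delta^{E}_{x_{i}}=\alpha_{i}\delta^{E}_{z_{i}}$, so $\delta^{E}_{x_{i}}$ and $\delta^{E}_{z_{i}}$ are simultaneously zero or simultaneously nonzero. If both were nonzero, i.e.\ $x_{i},z_{i}\notin A$, then by the observation preceding the corollary, the ideal $E$ would strictly separate $x_{i}$ and $z_{i}$, i.e.\ $\delta^{E}_{x_{i}}$ and $\delta^{E}_{z_{i}}$ would be linearly independent, contradicting $\delta^{E}_{x_{i}}=\alpha_{i}\delta^{E}_{z_{i}}$ with $\alpha_{i}>0$. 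Hence $\delta^{E}_{x_{i}}=\delta^{E}_{z_{i}}=0_{E^{*}}$, equivalently $x_{i},z_{i}\in A$.

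Consequently, any $f\in \Co\left(X\right)_{A}$ satisfies $f|_{Y}=0$ (as $Y\subset A$) and $f\left(x_{i}\right)=0=\alpha_{i}f\left(z_{i}\right)$ for every $i$ (as $x_{i},z_{i}\in A$), so it lies in $E$. Thus $\Co\left(X\right)_{A}\subset E$, completing the proof. The main (minor) obstacle is simply verifying that the strict-separation property promised in the remark before the corollary is incompatible with a nontrivial proportionality relation between the two point evaluations, which as shown above follows immediately from the definition of strict separation.
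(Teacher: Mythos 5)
Your proposal is correct and is essentially the paper's own argument: the paper proves this corollary exactly by combining Corollary \ref{subl} with the preceding observation that an ideal strictly separates any two distinct points at which it does not vanish, which is what you spell out (the paper merely leaves the details implicit). The only microscopic point left unsaid is the degenerate possibility $x_{i}=z_{i}$ in the representation, where strict separation is not available, but such a constraint is either vacuous ($\alpha_{i}=1$) or forces $x_{i}\in A$, so nothing changes.
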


This result is not true for $\Co_{b}\left(X\right)$, since unless $X$ is compact, it has some ``hidden points''.

\begin{example}
Let $X$ be locally compact but not compact. Consider the set $\Co_{0}\left(X\right)$  that consists of the functions that \emph{vanish at infinity}, i.e. all $f\in \Co\left(X\right)$ such that for every $\varepsilon>0$ the set $f^{-1}\left(\left[\varepsilon,+\8\right)\right)$ is compact. It is easy to see that $\Co_{0}\left(X\right)$ is a closed ideal of $\Co_{b}\left(X\right)$, which vanishes nowhere. It turns out that it vanishes at a ``hidden point''.

Let $X_{\8}=X\cup\left\{\8\right\}$ be the one point compactification of $X$, and let $\varphi:X\to X_{\8}$ be the inclusion. It is easy to see that $C_{\varphi}$ is an isometric isomorphism from $\Co\left(X_{\8}\right)_{\left\{\8\right\}}$ onto $\Co_{0}\left(X\right)$.

Applying the Stone-Weierstrass theorem to $\Co\left(X_{\8}\right)_{\left\{\8\right\}}$ we see that a sublattice of $\Co_{0}\left(X\right)$ is dense if and only if it strictly separates points of $X$. Note that if $E$ is a subalgebra, or has a so called truncation (see \cite[Lemma 4.1]{bh}), it is dense if and only if it vanishes nowhere and separates points.
\qed\end{example}

\begin{example}\label{sch}
More generally, it follows from identification of $\Co_{b}\left(X\right)$ with $\Co\left(\beta X\right)$, that a closed ideals of $\Co_{b}\left(X\right)$ are of the form $\Co\left(\beta X\right)_{A}$, where $A\subset \beta X$ is closed. Note however, that if $A\subset X\subset \beta X$, then $\Co_{b}\left(X\right)_{A}$ can be identified with $\Co\left(\beta X\right)_{A}$, since for a function on $\beta X$ to have a restriction to $X$ that vanishes on $A$ simply means to vanish on $A$.
\qed\end{example}

Let us derive another corollary of the Stone-Weierstrass theorem.

\begin{proposition}\label{sd}
Let $E$ be a dense sublattice of $\Co\left(X\right)$ and let $K\subset X$ be compact. Then, $E_{K}$ is dense in $\Co\left(X\right)_{K}$.
\end{proposition}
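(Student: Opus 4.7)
The plan is to invoke the Stone--Weierstrass theorem stated earlier in this section and reduce the claim to a two-point interpolation property for $E_K$. Since $E_K$ is a sublattice of $\Co(X)$ and is contained in the closed set $\Co(X)_K$, only the inclusion $\Co(X)_K \subseteq \overline{E_K}$ needs argument. By the Stone--Weierstrass theorem, this amounts to showing that for every $f \in \Co(X)_K$ and every pair $x, y \in X$ there is some $g \in E_K$ with $g(x) = f(x)$ and $g(y) = f(y)$.

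I would split the verification into cases according to whether $x$ and $y$ lie in $K$. When $x, y \in K$, both values of $f$ are zero and $g = \0$ works. When exactly one point, say $y$, lies outside $K$, the sets $\{y\}$ and $K$ are disjoint and compact, so Corollary \ref{ssul} yields $h \in E$ vanishing on $K$ with $h(y) \ge 1$; rescaling to $g = (f(y)/h(y)) h$ gives an element of $E_K$ with the correct values at $x$ and $y$. In the remaining case $x, y \notin K$ with $x \ne y$, I would apply Corollary \ref{ssul} twice: first to the disjoint compact pair $\{x\}, K \cup \{y\}$ to obtain $h_1 \in E$ vanishing on $K \cup \{y\}$ with $h_1(x) \ge 1$, and symmetrically to get $h_2$ vanishing on $K \cup \{x\}$ with $h_2(y) \ge 1$. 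The function $g = (f(x)/h_1(x)) h_1 + (f(y)/h_2(y)) h_2$ then lies in $E_K$ and interpolates $f$ at $x$ and $y$.

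I do not anticipate serious obstacles here. The main thing to keep track of is that each invocation of Corollary \ref{ssul} has valid hypotheses: singletons are compact, finite unions of compact sets are compact, and disjointness in each appeal is immediate from the assumed locations of $x$ and $y$. The degenerate sub-case $x = y \notin K$ is absorbed by the one-point construction.
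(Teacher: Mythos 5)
Your proof is correct and follows essentially the same route as the paper: both reduce the claim via the two-point interpolation form of the Stone--Weierstrass theorem and produce the needed functions from Corollary \ref{ssul} applied to the disjoint compact sets $\left\{x\right\}$ and $K\cup\left\{y\right\}$. The only difference is cosmetic: the paper phrases this as strict separation of points of $X\backslash K$ by $E_{K}$ and leaves the cases where $x$ or $y$ lies in $K$ implicit, whereas you write out the case analysis and the explicit interpolating combination.
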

\begin{proof}
It is enough to show that $E_{K}$ strictly separates points of $X\backslash K$. Indeed, if $x,y\in X\backslash K$, from Corollary \ref{ssul} there is $f\in E$ which vanishes on $K\cup\left\{y\right\}$ (and so $f\in E_{K}$ and $f\left(y\right)=0$) and such that $f\left(x\right)\in\left(1,2\right)$.
\end{proof}\smallskip

It is known that every closed subalgebra of $\Co\left(X\right)$ is a sublattice (the proof of \cite[Theorem 16.5.2]{bn}). Conversely, a closed sublattice of $\Co\left(X\right)$ is a subalgebra if and only if it has only simple constraints, i.e. in Corollary \ref{subl} $\alpha_{i}=1$, for every $i\in I$. Indeed, if $E$ is a subalgebra, and $f\left(x\right)=\alpha f\left(z\right)$, for every $f\in E$, this is also true for $f^{2}$, from where $\alpha\in\left\{0,1\right\}$. Note that such sublattice vanishes nowhere if and only if it contains $\1$.\medskip

Let $E$ be a sublattice of $\Co\left(X\right)$ that has only simple constraints, and which vanishes on (a closed) $A\subset X$. Let $X_{E}$ be $\left\{\delta_{x}^{E},~ x\in X\right\}$ endowed with the weak* topology. From our assumption $X_{E}$ may contain $0_{E^{*}}$, but it cannot contain non-zero linearly dependent elements. It is easy to see that the map $\varphi: X\to X_{E}$ defined by $\varphi\left(x\right)=\delta^{E}_{x}$ is a continuous surjection and $A=\varphi^{-1}\left(0_{E^{*}}\right)$. Moreover, the map $J:E\to\Co\left(X_{E}\right)$ defined by $\left[Jf\right]\left(\delta^{E}_{x}\right)=f\left(x\right)$ is a continuous homomorphism, such that $C_{\varphi}J=Id_{E}$. Hence, $JE$ is a sublattice of $\Co\left(X_{E}\right)$, and $J$ is an isomorphism from $E$ onto $JE$. Moreover, from our assumption, $JE$ strictly separates points of $X_{E}\backslash\left\{0_{E^{*}}\right\}$. Therefore, from the Stone-Weierstrass theorem $\overline{JE}=\Co\left(X_{E}\right)_{\left\{0_{E^{*}}\right\}}$ (if $A=\varnothing$, then $\overline{JE}=\Co\left(X_{E}\right)$). Let us summarize for the case when $X$ is compact.

\begin{proposition}\label{quot}
Let $X$ be compact and let $E$ be a sublattice of $\Co\left(X\right)$ that has only simple constraints. Then, there is a compact space $X_{E}$ and a continuous surjection $\varphi:X\to X_{E}$ such that $C_{\varphi}$ is an isometric isomorphism from $\Co\left(X_{E}\right)$ onto $\overline{E}$, if $E$ vanishes nowhere, and from $\Co\left(X_{E}\right)_{o}$ onto $\overline{E}$, for some $o\in X_{E}$, otherwise.
\end{proposition}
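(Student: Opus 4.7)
The proposition is essentially a packaging of the construction given in the paragraph immediately preceding it, so my proof plan would be to verify carefully the three things the preceding paragraph glosses over: that $X_E$ is compact, that $C_\varphi$ is isometric, and that $C_\varphi$ lands on exactly the right closed sublattice.

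First I would restate the objects. Set $X_E = \{\delta_x^E : x\in X\}$ with the weak* topology inherited from $E^{*}$, and $\varphi:X\to X_E$ by $\varphi(x) = \delta_x^E$. For each $f\in E$ the evaluation $\delta_x^E(f) = f(x)$ is continuous in $x$, and the weak* topology on $X_E$ is generated by exactly these maps, so $\varphi$ is continuous. By construction $\varphi$ is surjective, and since $X$ is compact, $X_E = \varphi(X)$ is compact Hausdorff. Let $o = 0_{E^{*}}$; this point lies in $X_E$ precisely when $E$ vanishes somewhere on $X$, in which case $\varphi^{-1}(o)$ is the (nonempty closed) vanishing set.

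Next I would verify that $C_\varphi : \Co(X_E)\to \Co(X)$ is an isometric homomorphism. Continuity of $\varphi$ makes $C_\varphi$ well-defined, and it is a homomorphism as already noted in Section \ref{x}. Surjectivity of $\varphi$ gives both injectivity, $f\circ\varphi = \0 \Rightarrow f=\0$, and the isometric identity
\[
\|C_\varphi f\|_\8 = \sup_{x\in X}|f(\varphi(x))| = \sup_{y\in X_E}|f(y)| = \|f\|_\8.
\]
The map $J:E\to \Co(X_E)$ given by $[Jf](\delta_x^E) = f(x)$ is well-defined exactly because $E$ has only simple constraints (so $\delta_x^E = \delta_z^E$ forces $f(x)=f(z)$, and $\delta_x^E = 0_{E^{*}}$ forces $f(x)=0$), and it satisfies $C_\varphi J = Id_E$. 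Hence $J$ is an isomorphism from $E$ onto $JE$ whose inverse is $C_\varphi|_{JE}$.

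Finally, I would identify the closed range. By construction $JE$ strictly separates points of $X_E\setminus\{o\}$ (two distinct, non-zero, linearly independent point evaluations on $E$ correspond to two points on which $E$ strictly separates), so the Stone--Weierstrass theorem stated earlier in this section gives $\overline{JE} = \Co(X_E)_{\{o\}}$ when $o \in X_E$, and $\overline{JE} = \Co(X_E)$ otherwise. Since $C_\varphi$ is an isometric homomorphism with closed range, $C_\varphi(\overline{JE}) = \overline{C_\varphi(JE)} = \overline{E}$, which is precisely the claim. The only step that takes real thought is the Stone--Weierstrass application, but this was already recorded in the preceding paragraph; everything else is bookkeeping about weak* compactness and the interaction of $J$ with $C_\varphi$.
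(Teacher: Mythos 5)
Your proposal is correct and follows essentially the same route as the paper, which proves the proposition by the very construction in the preceding paragraph: $X_E$ with the weak* topology, $\varphi(x)=\delta_x^E$, the map $J$ with $C_\varphi J=Id_E$, strict separation on $X_E\setminus\{0_{E^*}\}$, and the lattice Stone--Weierstrass theorem. Your added verifications (compactness of $X_E$, the isometry of $C_\varphi$ via surjectivity of $\varphi$, and the closed-range argument identifying $C_\varphi(\overline{JE})$ with $\overline{E}$) are exactly the bookkeeping the paper leaves implicit.
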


\begin{remark}\label{eqr}Recall that an equivalence relation $\sim$ on $X$ is called \emph{closed} if for every closed $A\subset X$ the union of all classes of $\sim$ that intersect $A$ is closed. This is equivalent to the fact that the quotient map from $X$ onto $X\slash\sim$ is closed (see \cite[Proposition 2.4.9]{engelking}). If $X$ is compact, there is a bijective correspondence between closed equivalence relations on $X$ and surjective maps from $X$ onto compact spaces (see \cite[Proposition 3.2.11]{engelking}). Hence, there is a bijective correspondence between closed equivalence relations on $X$ and closed sublattices that contain $\1$.

Also note that in the class of closed sublattices of $\Co\left(X\right)$ that have only simple constraints vanishing nowhere is not a restrictive property. Indeed, if $E$ is a closed sublattice that vanishes on $A$ and has only simple constraints, it is a subalgebra, and so $F=E+\R \cdot\1$ is a closed subalgebra, and therefore a sublattice with $E=F_{A}$. Hence, there is a bijective correspondence between closed equivalence relations on $X$ with a distinguished class and closed sublattices of $\Co\left(X\right)$ that have only simple constraints. More discussion on this topic see in the end of Section \ref{c}.
\qed\end{remark}\smallskip

Let us conclude the section with discussing the notions of supremum and infimum in $\Co\left(X\right)$. For every $g\in \Co\left(X\right)_{+}\backslash \left\{\0\right\}$ there is an open nonempty $U\subset X$ and $\varepsilon>0$ such that $g\ge \varepsilon\1_{U}$. Conversely, for every open nonempty $U\subset X$ there is $g\in \left(\0,\1_{U}\right]$ (indeed, pick any $g\in \left[\1_{\left\{x\right\}},\1_{U}\right]$, for some $x\in U$). The following is a version of a result from \cite{kv1}.

\begin{proposition}\label{inf}
If $G\subset \Co\left(X\right)_{+}$, then $\inf G=\0$ if and only if for every $n\in\N$ and every open $U$ in $X$ there are $f\in G$ and $x\in U$ such that $f\left(x\right)<\frac{1}{n}$.
\end{proposition}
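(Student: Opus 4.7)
The plan is to reduce the equivalence to the two observations collected just before the proposition: a function $g\in \Co\left(X\right)_{+}$ is nonzero if and only if there exist a nonempty open $U\subset X$ and $\varepsilon>0$ with $g\ge \varepsilon \1_{U}$, and conversely every nonempty open $U$ supports some $g\in \Co\left(X\right)$ with $\0<g\le \1_{U}$. Reading the condition in the statement as ``for every $n\in\N$ and every nonempty open $U\subset X$ there exist $f\in G$ and $x\in U$ with $f\left(x\right)<\frac{1}{n}$,'' both directions follow by direct contrapositive.

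For the forward direction, assume $\inf G=\0$ and suppose toward a contradiction that for some nonempty open $U$ and some $n\in\N$ one has $f\left(x\right)\ge \frac{1}{n}$ for all $f\in G$ and all $x\in U$. I would pick $g\in \Co\left(X\right)_{+}$ with $\0<g\le \1_{U}$ and check that $\frac{1}{n}g$ is a positive lower bound of $G$: on $U$ we have $f\left(x\right)\ge \frac{1}{n}\ge \frac{1}{n}g\left(x\right)$ since $g\le\1_{U}$, and on $X\setminus U$ we have $\frac{1}{n}g\left(x\right)=0\le f\left(x\right)$ since $G\subset \Co\left(X\right)_{+}$. This contradicts $\inf G=\0$.

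For the converse, assume the condition on $G$ and let $h\in \Co\left(X\right)$ be any lower bound of $G$. Replacing $h$ by $h^{+}$ is harmless since $f\ge h$ and $f\ge \0$ yield $f\ge h\vee \0=h^{+}$, so I may assume $h\in \Co\left(X\right)_{+}$. If $h>\0$, the recalled characterization produces a nonempty open $U$ and $\varepsilon>0$ with $h\ge \varepsilon\1_{U}$; choose $n$ with $\frac{1}{n}<\varepsilon$. The hypothesis then yields $f\in G$ and $x\in U$ with $f\left(x\right)<\frac{1}{n}<\varepsilon\le h\left(x\right)$, contradicting $f\ge h$. Hence every positive lower bound of $G$ is $\0$, so $\inf G=\0$.

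There is no real obstacle here; the only subtlety is parsing the quantifier on $x$ in the stated condition, after which both implications are immediate from the dictionary between nonzero positive continuous functions and nonempty open sets of $X$.
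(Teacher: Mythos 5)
Your proof is correct and follows essentially the same route as the paper: both directions rest on the dictionary recalled just before the proposition (a nonzero $g\in\Co\left(X\right)_{+}$ dominates $\varepsilon\1_{U}$ on some nonempty open $U$, and every nonempty open $U$ carries some $g\in\left(\0,\1_{U}\right]$), with the same contrapositive arguments in each direction. Your explicit reduction of a general lower bound $h$ to $h^{+}$ is a small point the paper leaves implicit, but otherwise the two proofs coincide.
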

\begin{proof}
Sufficiency: Let $g\in \Co\left(X\right)$ be such that $\0\le g\le G$. For every open nonempty $U\subset X$ and $n\in\N$ there is $f\in G$ and $x\in X$ such that $g\left(x\right)\le f\left(x\right)<\frac{1}{n}$. Hence, from the comment before the proposition, $g=\0$.

Necessity: Assume that there is $n\in\N$ and open nonempty $U$ such that $f\left(x\right)\ge\frac{1}{n}$, for every $f\in G$ and $x\in U$. Take $g\in \left(\0,\1_{U}\right]$; we have $\0<\frac{1}{n}g\le f$, for every $f\in G$. Contradiction.
\end{proof}

Even though the following corollary will not be used in the sequel, we find it appropriate to present it here. The first part is a version of a result from \cite{kv1}, the second part for sequences was considered much more comprehensively in \cite{vdw}.

\begin{corollary}
\item[(i)] Let $G\subset \Co\left(X\right)_{+}$. If there is a dense $Y\subset X$, such that $\inf\limits_{f\in G} f\left(x\right)=0$, for every $x\in Y$, then $\inf G=\0$. The converse holds if $X$ is a Baire space (including locally compact or metrizable by a complete metric).
\item[(ii)] If $X$ is a Baire space, then the uo-convergence in $\Co\left(X\right)$ implies pointwise convergence on a dense subset of $X$.
\end{corollary}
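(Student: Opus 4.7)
The plan is to derive both parts from Proposition \ref{inf} combined with the Baire category theorem, so that the ``Baire'' hypothesis enters only to promote a family of dense open sets to a dense $G_{\delta}$ set.

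For part (i), the forward direction is essentially a quantifier swap applied to Proposition \ref{inf}. I would fix a nonempty open $U\subset X$ and $n\in\N$; since $Y$ is dense, there exists $x\in Y\cap U$, and then from $\inf_{f\in G}f(x)=0$ we obtain $f\in G$ with $f(x)<\frac{1}{n}$. This verifies the criterion of Proposition \ref{inf}, giving $\inf G=\0$.

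For the converse under the Baire hypothesis, I would set $U_{n}=\bigcup_{f\in G}f^{-1}([0,\frac{1}{n}))$. Each $U_{n}$ is open as a union of preimages of an open set under continuous functions; Proposition \ref{inf} applied to $\inf G=\0$ says precisely that $U_{n}$ meets every nonempty open subset of $X$, so $U_{n}$ is dense. Since $X$ is Baire, $Y:=\bigcap_{n\in\N}U_{n}$ is dense, and for every $x\in Y$ and every $n\in\N$ one can pick $f_{n}\in G$ with $f_{n}(x)<\frac{1}{n}$, forcing $\inf_{f\in G}f(x)=0$.

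For part (ii), suppose $f_{i}\xrightarrow{uo}f$ in $\Co(X)$. I would specialize the definition of uo-convergence to $h=\1\in\Co(X)_{+}$, obtaining $\left|f-f_{i}\right|\wedge\1\xrightarrow{o}\0$. Unravelling the definition of order convergence produces a net $\{p_{j}\}_{j\in J}$ in $\Co(X)_{+}$ with $p_{j}\downarrow\0$ such that for every $j$ there is $i_{0}$ with $\left|f-f_{k}\right|\wedge\1\le p_{j}$ whenever $k\ge i_{0}$. Applying part (i) to the set $\{p_{j}\}_{j\in J}\subset\Co(X)_{+}$ (whose infimum in $\Co(X)$ is $\0$), I get a dense $Y\subset X$ on which $\inf_{j}p_{j}(x)=0$; since $\{p_{j}(x)\}$ is a decreasing net of nonnegative reals, this is equivalent to $p_{j}(x)\downarrow 0$ for every $x\in Y$. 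Given $x\in Y$ and $0<\varepsilon<1$, choosing $j$ with $p_{j}(x)<\varepsilon$ and then a tail index $i_{0}$ yields $\left|f(x)-f_{k}(x)\right|\wedge 1\le p_{j}(x)<\varepsilon$ for $k\ge i_{0}$, hence $f_{k}(x)\to f(x)$.

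The only delicate point is the passage from $p_{j}\downarrow\0$, an infimum in $\Co(X)$, to the pointwise statement $p_{j}(x)\downarrow 0$ on a dense set, and this is precisely the content of part (i); without the Baire assumption no such passage is available in general, as Proposition \ref{inf} already shows.
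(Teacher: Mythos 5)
Your proposal is correct and follows essentially the same route as the paper: the forward direction of (i) is the quantifier swap via Proposition \ref{inf}, the converse uses the dense open sets $U_{n}$ together with the Baire property, and (ii) is obtained by specializing uo-convergence at $h=\1$ and applying (i) to the dominating net $p_{j}\downarrow\0$. (Only cosmetic remark: $f^{-1}\left(\left[0,\tfrac{1}{n}\right)\right)$ is open not because $\left[0,\tfrac{1}{n}\right)$ is open in $\R$ but because it coincides with $f^{-1}\left(\left(-\infty,\tfrac{1}{n}\right)\right)$ for $f\ge\0$.)
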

\begin{proof}
(i): The first claim follows immediately from Proposition \ref{inf}. For the second one consider $U_{n}=\bigcup\limits_{f\in G}f^{-1}\left(\frac{1}{n},+\8\right)$, where $n\in\N$. It is an open set, which is dense, according to Proposition \ref{inf}. Since $X$ is a Baire space, the set $\left\{x\in X,~\inf\limits_{f\in G} f\left(x\right)=0\right\}=\bigcap\limits_{n\in\N} U_{n}$ is dense.

(ii): Let $\left\{f_{\gamma}\right\}_{\gamma\in \Gamma}\subset \Co\left(X\right)$ be an uo-null net, and let $\left\{g_{\beta}\right\}_{\beta\in B}\subset \Co\left(X\right)$ be decreasing to $\0$ and such that for every $\beta\in B$ there is $\gamma\in \Gamma$ such that $\left|f_{\alpha}\right|\wedge \1\le \left|g_{\beta}\right|$, as soon as $\alpha\ge \gamma$. Since $g_{\beta}\downarrow \0$, from (i) there is a dense set $Y\subset X$ such that $g_{\beta}\left(y\right)\downarrow 0$, for every $y\in Y$. But then $f_{\gamma}\left(y\right)\to 0$, for every $y\in Y$.
\end{proof}

\section{Sublattices of $\Co\left(X\right)$}\label{s}

Let us investigate ``richness'' of a sublattice of $\Co\left(X\right)$. We again assume that $X$ is Tychonoff throughout the section.  We will call a sublattice $E\subset\Co\left(X\right)$ an \emph{Urysohn sublattice} if for every open nonempty $U\subset X$ and $x\in U$, there is $f\in E$ such that $\left.f\right|_{X\backslash U}\equiv 0$, and $f\left(x\right)\ne 0$, i.e. $E_{X\backslash U}$ vanishes at no point of $U$. We will also call $E$ a \emph{weakly Urysohn sublattice} if for every open nonempty $U\subset X$, there is $f\in E$ such that $\left.f\right|_{X\backslash U}\equiv 0$, and $f>\0$, i.e. $E_{X\backslash U}\ne\left\{\0\right\}$. The following is straightforward.

\begin{itemize}
\item $E$ is an Urysohn sublattice if and only if  $\overline{A}=\bigcap\limits_{f\in E_{A}}f^{-1}\left(0\right)$, for any $A\subset X$, and if and only if $E_{B}\subset E_{A}\Rightarrow \overline{A}\subset \overline{B}$, for any $A,B\subset X$.
\item $E$ is a weakly Urysohn sublattice if and only if $\Int \overline{A}=\Int \bigcap\limits_{f\in E_{A}}f^{-1}\left(0\right)$, for every $A\subset X$, and if and only if $E_{B}\subset E_{A}\Rightarrow \Int\overline{A}\subset \Int\overline{B}$, for any $A,B\subset X$.
\end{itemize}

It follows from the Stone-Weierstrass theorem that every Urysohn sublattice is dense in $\Co\left(X\right)$. Since $X$ is a Tychonoff space, both $\Co\left(X\right)$ and $\Co_{b}\left(X\right)$ are Urysohn sublattices. If a sublattice contains a (weakly) Urysohn sublattice, it is also a (weakly) Urysohn sublattice.\medskip

If $E$ is weakly Urysohn, then $E_{A}^{d}=E_{\overline{X\backslash \bigcap\limits_{f\in E_{A}} f^{-1}\left(0\right)}}=E_{X\backslash \Int\overline{A}}=E_{\overline{X\backslash \overline{A}}}$, from where $E_{A}^{dd}=E_{\overline{X\backslash X\backslash \Int\overline{A}}}=E_{\overline{\Int\overline{A}}}$, and $\left\{f\right\}^{dd}=E_{\overline{\Int f^{-1}\left(0\right)}}$, for every $f\in E$. Since in a Urysohn sublattice we can recover $A$ from $E_{A}$, these equalities allow to obtain the following characterizations.

\begin{proposition}\label{ur}Let $E\subset\Co\left(X\right)$ be a Urysohn sublattice. Then:
\item[(i)]  $E_{A}$ is a band in $E$ if and only if $\overline{A}=\overline{\Int\overline{A}}$, i.e. $\overline{A}$ is a closure of an open set (also known as a regularly closed set).
\item[(ii)] If $E_{A}$ is a projection band in $E$, then $\overline{A}$ is clopen.
\item[(iii)] If $E$ has PP, then $X$ is extremally disconnected.
\item[(iv)] If $E$ has PPP, then $X$ is totally disconnected.
\end{proposition}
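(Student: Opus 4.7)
The plan is to prove the four parts in order, with each later part drawing on the earlier ones and on the two identities established just before the statement: $E_A^{dd}=E_{\overline{\Int\overline{A}}}$ and $\{f\}^{dd}=E_{\overline{\Int f^{-1}(0)}}$, together with the fact that in an Urysohn sublattice $E_B=E_C$ iff $\overline{B}=\overline{C}$ (since $\overline{B}=\bigcap_{f\in E_B}f^{-1}(0)$).

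For (i), I would observe that $\Co(X)$ is Archimedean, hence so is $E$, and recall from the Preliminaries that $G^{dd}$ is the minimal band containing $G$; this makes an ideal $I$ a band iff $I=I^{dd}$. Apply this to $E_A$: using $E_A^{dd}=E_{\overline{\Int\overline{A}}}$ and the Urysohn identification above, $E_A=E_A^{dd}$ translates to $\overline{A}=\overline{\Int\overline{A}}$.

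For (ii), let $E_A$ be a projection band with $E=E_A+E_A^d=E_{\overline{A}}+E_{\overline{X\setminus\overline{A}}}$. Every $f\in E$ decomposes as $f=g+h$ with $g\equiv 0$ on $\overline{A}$ and $h\equiv 0$ on $\overline{X\setminus\overline{A}}$, so $f$ vanishes on the intersection $\overline{A}\cap\overline{X\setminus\overline{A}}=\partial\overline{A}$. But the Urysohn property (taking $U=X$) forces $E$ to vanish at no point of $X$, so $\partial\overline{A}=\varnothing$, i.e. $\overline{A}$ is clopen. Part (iii) is then immediate: for any open $V\subset X$ the set $\overline{V}$ is canonically closed (so $E_{\overline{V}}$ is a band by (i)); PP makes it a projection band, and (ii) gives $\overline{V}$ clopen, hence $X$ extremally disconnected.

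For (iv), note first that by (ii) applied to $A=\overline{\Int f^{-1}(0)}$, the PPP hypothesis guarantees that $\overline{\Int f^{-1}(0)}$ is clopen for every $f\in E$. To conclude that $X$ is totally disconnected I will actually prove the stronger statement that every pair of distinct points is separated by a clopen set. Given $x\ne y$, use that $X$ is Tychonoff (in particular regular $T_1$) to pick disjoint open $V\ni x$ and $U\ni y$; then $x\notin\overline{U}$, so $X\setminus\overline{U}$ is an open neighborhood of $x$. The Urysohn property applied to $U$ and $y$ produces $g\in E$ with $g\equiv 0$ on $X\setminus U$ and $g(y)\ne 0$. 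Then $g$ vanishes on the open neighborhood $X\setminus\overline{U}$ of $x$, so $x\in\Int g^{-1}(0)\subset\overline{\Int g^{-1}(0)}$, while $g(y)\ne 0$ gives a neighborhood of $y$ disjoint from $g^{-1}(0)$, hence $y\notin\overline{\Int g^{-1}(0)}$. The clopen set $\overline{\Int g^{-1}(0)}$ thus separates $x$ from $y$. The main obstacle is the coordination in (iv): choosing the right open set so that Urysohn produces a $g$ whose zero set has $x$ in its interior while staying away from $y$; the key is to exploit regularity to pass from "disjoint from $U$" to "in the complement of $\overline{U}$".
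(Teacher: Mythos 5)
Your proposal is correct and follows essentially the same route as the paper: (i) via $E_A^{dd}=E_{\overline{\Int\overline{A}}}$ and the Urysohn identification of sets from their vanishing ideals, (ii) by observing that a decomposition $E=E_{\overline{A}}+E_{\overline{X\setminus\overline{A}}}$ would force all of $E$ to vanish on $\partial\overline{A}$ while a Urysohn sublattice vanishes nowhere, (iii) by combining (i) and (ii), and (iv) by producing, for distinct $x,y$, a function $g\in E$ supported in a small neighborhood of one point so that the clopen set $\overline{\Int g^{-1}(0)}$ (clopen by PPP and (ii)) contains exactly one of the two points. The only deviations are cosmetic (e.g.\ in (iv) your clopen set contains $x$ rather than $y$), so there is nothing to fix.
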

\begin{proof}
(i) follows from $E_{A}^{dd}=E_{\overline{\Int\overline{A}}}$.

(ii): If $\overline{A}$ is not clopen there is $x\in \partial \overline{A}=\left(X\backslash \Int\overline{A}\right)\cap \overline{A}$, therefore $E_{\overline{A}}, E_{X\backslash \Int\overline{A}}\subset E_{\left\{x\right\}}\ne E$, and so $E\ne E_{\overline{A}}+E_{\overline{A}}^{d}$. Contradiction.

(iii) follows from combining (i) and (ii).

(iv): Let $x,y\in X$ be distinct. Let $U$ be a neighborhood of $x$ such that $y\not\in\overline{U}$. There is $f\in E$ that vanishes outside $U$, and such that $f\left(x\right)\ne 0$. Then, $\left\{f\right\}^{dd}=E_{\overline{\Int f^{-1}\left(0\right)}}$, and the latter is a projection band, from where $\overline{\Int f^{-1}\left(0\right)}$ is clopen. Since $f\left(x\right)=1$, we have $x\not\in f^{-1}\left(0\right)\supset \overline{\Int f^{-1}\left(0\right)}$. On the other hand, since $f$ vanishes outside of $U$, we have $y\in X\backslash\overline{U}\subset \Int f^{-1}\left(0\right)$. Hence, we found a clopen set that contains exactly one of an arbitrary pair of points in $X$. Thus, $X$ is totally disconnected.\end{proof}

Recall that from Corollary \ref{id}, every closed ideal in $\Co\left(X\right)$ is of the form $\Co\left(X\right)_{A}$, for some closed $A\subset X$. Since $\Co\left(X\right)$ is an Urysohn sublattice, it follows that $A$ is unique. We also get the following description of bands and projection bands in $\Co\left(X\right)$.

\begin{corollary}\label{bpb}
$H$ is a (projection) band in $\Co\left(X\right)$ if and only if $H=\Co\left(X\right)_{A}$, for some regularly  closed (clopen) $A\subset X$.
\end{corollary}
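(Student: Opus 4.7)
My plan is to reduce the entire corollary to Proposition \ref{ur} together with the zero-set identities for $E_A$ and for disjoint complements stated in the paragraphs preceding Corollary \ref{id}. The setup is free of charge: $\Co(X)$ is itself an Urysohn sublattice of $\Co(X)$ (the Tychonoff hypothesis provides, for every closed $A \subset X$ and every $x \notin A$, a continuous function vanishing on $A$ and nonzero at $x$), so Proposition \ref{ur} applies with $E = \Co(X)$.

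For the band equivalence, sufficiency is immediate from Proposition \ref{ur}(i): if $A$ is canonically closed, then $\Co(X)_A$ is a band. For necessity, given a band $H$, I would set $A := \bigcap_{f \in H} f^{-1}(0)$, a closed subset of $X$, and use the formula $G^d = E_{\overline{X \setminus \bigcap_{f \in G} f^{-1}(0)}}$ recalled just before Corollary \ref{id} to write $H^d = \Co(X)_{\overline{X \setminus A}} = \Co(X)_{X \setminus \Int A}$. Applying the Urysohn characterization $\overline{C} = \bigcap_{g \in \Co(X)_C} g^{-1}(0)$ to the closed set $C = X \setminus \Int A$ and plugging the result back into the same disjoint-complement formula yields $H^{dd} = \Co(X)_{\overline{\Int A}}$. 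Since $H$ is a band we have $H = H^{dd}$, so $H = \Co(X)_B$ with $B = \overline{\Int A}$, which is canonically closed as the closure of an open set.

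For the projection band equivalence, Proposition \ref{ur}(ii) handles one direction once we know $H = \Co(X)_B$ with $B$ closed: being a projection band forces $B$ clopen. For the converse I would exhibit the projection directly: if $B$ is clopen then $\1_{X \setminus B} \in \Co(X)$, and $P := M_{\1_{X \setminus B}}$ satisfies $0_{\Lo(\Co(X))} \le P = P^2 \le Id_{\Co(X)}$ with range $\{f \in \Co(X) : f \1_B = \0\} = \Co(X)_B$, so $\Co(X)_B$ is a projection band. I do not anticipate a genuine obstacle; the one spot that requires care is the chain of closure/interior manipulations in the $H^{dd}$ computation, but this is pure bookkeeping once the Urysohn and complement formulas are laid out.
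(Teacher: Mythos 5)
Your proof is correct. The sufficiency halves coincide with the paper's: canonically closed $A$ gives a band via Proposition \ref{ur}(i), and for clopen $B$ your projection $M_{\1_{X\setminus B}}$ is just a repackaging of the paper's decomposition $f=\1_{B}f+\1_{X\setminus B}f$ showing $\Co\left(X\right)=\Co\left(X\right)_{B}+\Co\left(X\right)_{B}^{d}$; likewise both proofs get the "projection band $\Rightarrow$ clopen" direction from Proposition \ref{ur}(ii). The genuine difference is in the necessity step for bands: the paper invokes Corollary \ref{id} (the Stone--Weierstrass description of topologically closed ideals), which implicitly requires knowing that a band in $\Co\left(X\right)$ is closed in the compact-open topology, whereas you obtain the representation $H=\Co\left(X\right)_{\overline{\Int A}}$ purely order-theoretically, by feeding $A=\bigcap_{f\in H}f^{-1}\left(0\right)$ into the zero-set formula for disjoint complements, using the Urysohn identity $\overline{C}=\bigcap_{g\in\Co\left(X\right)_{C}}g^{-1}\left(0\right)$ for the closed set $C=X\setminus\Int A$, and then appealing to $H=H^{dd}$. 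That last equality is where your argument leans on the Archimedean property, via the paper's preliminary statement that $G^{dd}$ is the smallest band containing $G$ (equivalently, the last clause of Proposition \ref{dis} with $E=F$); it would be worth citing explicitly. Your route buys a more self-contained argument that in effect also supplies the topological closedness of bands that the paper's citation of Corollary \ref{id} takes for granted, at the cost of a slightly longer computation. One cosmetic point: in the clopen case you should note (as is immediate, since clopen sets are canonically closed) that $\Co\left(X\right)_{B}$ is a band before invoking the characterization of projection bands through the operator $P$, since the paper states that characterization for sets already known to be bands.
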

\begin{proof}
It follows from Corollary \ref{id}, and parts (i) and (ii) of Proposition \ref{ur} that we only need to prove that $\Co\left(X\right)_{A}$ is a projection band, for every clopen $A\subset X$. Indeed, both $\1_{A}$ and $\1_{X\backslash A}$  are continuous, and so $f=\1_{A}f + \1_{X\backslash A}f$, for every $f\in \Co\left(X\right)$, from where $\Co\left(X\right)=\Co\left(X\right)_{A}+\Co\left(X\right)_{A}^{d}$.
\end{proof}

We now move on to study regularity and order density of sublattices of $\Co\left(X\right)$.

\begin{proposition}\label{sub}Let $E\subset \Co\left(X\right)$ be a sublattice. Then:
\item[(i)] $E$ is regular if and only if $\inf_{E}\left\{f\in E,~f\ge \1_{U}\right\}\ne\0$ (this includes this set being empty, or having no infimum), for every open nonempty $U\subset X$.
\item[(ii)]  $E$ is order dense if and only if for every open nonempty $U\subset X$ there is $f\in E$ such that $\0<f\le \1_{U}$, and if and only if $\bigcap \limits_{f\in E\cap \left[\0,\1_{U}\right]}f^{-1}\left(0\right)$ is nowhere dense in $U$, for every open nonempty $U\subset X$.
\item[(iii)] If $A\subset X$, then $E_{A}$ is order dense if and only if $E$ is order dense and $A$ is nowhere dense.
\end{proposition}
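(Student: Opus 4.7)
The plan is to fix the abbreviation $G_U := \{f \in E : f \ge \1_U\}$ for open $U \subset X$ and lean on two recurring facts throughout the proof: Proposition~\ref{inf}, which characterizes downward directed subsets of $\Co\left(X\right)_+$ with infimum $\0$, and the observation recorded just before that proposition that every nonzero $g \in \Co\left(X\right)_+$ dominates $\varepsilon\1_V$ for some open nonempty $V$ and some $\varepsilon > 0$. I will also use that regularity of $E$ in $\Co\left(X\right)$ is equivalent to the statement: whenever a decreasing net $\{g_i\}\subset E_+$ satisfies $\inf_E g_i = \0$, also $\inf_{\Co\left(X\right)} g_i = \0$.

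For (i), in the forward direction assume $E$ is regular and that $\inf_E G_U$ exists and equals $\0$ for some open nonempty $U$; the remaining cases (the set being empty, or the infimum not existing) are already compatible with the conclusion. Since $G_U$ is downward directed (closed under $\wedge$), regularity promotes the equality to $\inf_{\Co\left(X\right)} G_U = \0$, but then Proposition~\ref{inf} applied with the open set $U$ itself produces some $f \in G_U$ with values below $1$ on $U$, contradicting $f \ge \1_U$. For the reverse direction, let $\{g_i\}\subset E_+$ be decreasing with $\inf_E g_i = \0$ and suppose some $f \in \Co\left(X\right)_+\setminus\{\0\}$ is a lower bound. Obtain open nonempty $U$ and $\varepsilon > 0$ with $f \ge \varepsilon\1_U$; then each $g_i/\varepsilon$ lies in $G_U$. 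The lower bounds of $G_U$ in $E$ are a subset of the lower bounds of $\{g_i/\varepsilon\}$ (whose supremum is $\0$), while $\0$ is itself a lower bound of $G_U$ since $G_U \subset E_+$. Hence $\inf_E G_U = \0$, contradicting the hypothesis. The main obstacle to watch for is exactly this last step: because the hypothesis allows ``no infimum in $E$,'' the argument must sandwich the infimum into existence before reaching the contradiction.

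For (ii), the first equivalence uses the Tychonoff property: given $g \in \Co\left(X\right)_+\setminus\{\0\}$, the basic observation reduces matters to an indicator of some open set; conversely, for open nonempty $U$ and $x \in U$, Tychonoff produces $g \in \left[\1_{\{x\}}, \1_U\right]$ to which order density of $E$ is applied to yield $f \in E$ with $\0 < f \le \1_U$. The second equivalence is essentially a translation: the set $A_U := \bigcap_{f \in E \cap [\0,\1_U]} f^{-1}(0)$ is nowhere dense in $U$ exactly when every open nonempty $V\subset U$ contains a point where some $f \in E \cap \left[\0, \1_U\right]$ is positive; truncating such an $f$ against a Tychonoff bump supported in $V$ converts this into the first condition applied to $V$ in place of $U$.

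For (iii), the forward direction is routine: $E_A \subset E$ transfers order density, and if $\overline A$ contained an open $U$ then every $f \in E_A$ would vanish on $\overline A \supset U$ by continuity, preventing order density against a Tychonoff bump supported in $U$. For the reverse, given $g > \0$ in $\Co\left(X\right)$, find $V$ open nonempty and $\varepsilon > 0$ with $g \ge \varepsilon\1_V$; since $A$ is nowhere dense, pick $y \in V \setminus \overline A$ together with an open $W$ satisfying $y\in W\subset V\setminus\overline A$, and produce by Tychonoff an $h \in \left[\1_{\{y\}}, \1_W\right]$. Then $\0 < \varepsilon h \le g$, and order density of $E$ supplies $f \in E$ with $\0 < f \le \varepsilon h$; since $\varepsilon h$ vanishes off $W \subset X \setminus A$, so does $f$, placing $f$ in $E_A$.
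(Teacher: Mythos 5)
Your proposal tracks the paper's proof in all essentials: part (i) is the same two‑sided argument (the paper's forward direction simply exhibits a positive lower bound of $\left\{f\in E:\ f\ge\1_U\right\}$ lying in $\left(\0,\1_U\right]$ instead of invoking Proposition \ref{inf}, a cosmetic difference), and part (iii) is the paper's argument, just run against the definition of order density rather than through (ii). The one step that does not hold up is in (ii): ``truncating such an $f$ against a Tychonoff bump supported in $V$'' forms $f\wedge h$ where $h$ is an arbitrary continuous bump, and this infimum need not belong to $E$, so it cannot witness the first condition for $V$. The step is, however, unnecessary: to pass from the nowhere‑density condition to the first condition for an open nonempty $V$, apply the hypothesis with $U=V$ itself; since $\bigcap_{f\in E\cap\left[\0,\1_V\right]}f^{-1}\left(0\right)$ is closed and nowhere dense in $V$, some $f\in E\cap\left[\0,\1_V\right]$ is nonzero at a point of $V$, and this very $f$ satisfies $\0<f\le\1_V$. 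With that one‑line repair your argument is complete and coincides with the paper's, which itself dismisses this equivalence as a mere reformulation of the second condition.
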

\begin{proof}
(i): Assume that $E$ is regular and $U\subset X$ is open nonempty and such that $G=\left\{f\in E,~f\ge \1_{U}\right\}\ne\varnothing$. If $\inf_{E}G=\0$, then, $\inf_{\Co\left(X\right)}G=\0$. On the other hand, there is $g\in \Co\left(X\right)$ such that $\0< g\le \1_{U}\le G$. Contradiction.

Conversely, assume $f_{\gamma}\downarrow_{E} \0$, but there is $g\in \Co\left(X\right)$ such that $\0<g\le f_{\gamma}$, for every $\gamma$. There is an open nonempty $U\subset X$ and $\varepsilon>0$ such that $g\ge \varepsilon\1_{U}$. Then $\frac{1}{\varepsilon}f_{\gamma}\ge \frac{1}{\varepsilon}g \ge \1_{U}$, and so $\frac{1}{\varepsilon}f_{\gamma}\in G$. Hence, $\0\le \inf_{E}G\le  \frac{1}{\varepsilon}\inf\limits_{\gamma\in \Gamma}f_{\gamma}=\0$.

(ii): The first equivalence follows from the observations before Proposition \ref{inf}. The last condition is a reformulation of the second one.

(iii): Necessity: Since $E_{A}\subset E$, order density of the former yields order density of the latter. If $U=\Int \overline{A}\ne \varnothing$, then there is no $f\in E_{A}$ such that $\0<f\le \1_{U}$. Sufficiency: if $U\subset X$ is open and nonempty, then $V=U\backslash \overline{A}$ is also open and nonempty, and since $E$ is order dense, there is $f\in E$ such that $\0<f\le \1_{V}$. Then $f\in E_{A}$ and $\0<f\le \1_{U}$, and so from (ii) $E_{A}$ is order dense.
\end{proof}

It follows from part (ii) that every order dense sublattice is weakly Urysohn and that $\Co_{b}\left(X\right)$ is order dense in $\Co\left(X\right)$. Note that in (i) and (ii) it is enough to check that the property holds for every $U$ from a certain base of the topology of $X$. If $X$ is locally compact, one can choose this base to consist of relatively compact regularly open ($U$ is regularly open if $U=\Int \overline{U}$) sets. Therefore, in this case the preceding result allows a refinement.

\begin{theorem}\label{sublo}Let $X$ be locally compact and let $E\subset \Co\left(X\right)$ be a sublattice. Then:
\item[(i)] $E$ is a weakly Urysohn sublattice if and only if it is order dense.
\item[(ii)] $E$ is regular in $\Co\left(X\right)$ if and only if $\overline{E}$ is regular and $E$ is order dense in $\overline{E}$.
\end{theorem}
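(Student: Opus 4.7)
The plan for part (i) is as follows. The direction order dense $\Rightarrow$ weakly Urysohn is immediate from Proposition \ref{sub}(ii): any $f \in E$ with $\0 < f \le \1_U$ vanishes off $U$ and is nonzero, so $E_{X \setminus U} \ne \{\0\}$. For the converse, given $U$ open nonempty I would use local compactness to pick a nonempty open relatively compact $V$ with $\overline V \subset U$, apply the weakly Urysohn property to produce $g \in E$ with $g > \0$ and $g|_{X \setminus V} \equiv 0$, and set $M = \sup_{\overline V} g$ (finite as $\overline V$ is compact). Since $g$ vanishes off $V$ we have $g \le M\1_V \le M\1_U$, so $\0 < g/M \le \1_U$, and Proposition \ref{sub}(ii) yields order density.

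For part (ii), the ``if'' direction follows immediately from Remark \ref{tran}. For ``only if'', suppose $E$ is regular in $\Co(X)$; then $E$ is regular in $\overline E$ by Remark \ref{tran}. I would first prove that $E$ is order dense in $\overline E$, and then deduce regularity of $\overline E$ in $\Co(X)$ by applying Corollary \ref{almo}(ii) to the inclusion $\overline E \hookrightarrow \Co(X)$: this inclusion is continuous for the compact-open topology, $E$ is dense (hence almost majorizing) in $\overline E$, and the restriction of the inclusion to $E$ is order continuous because $E$ is regular in $\Co(X)$.

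For order density of $E$ in $\overline E$, let $g \in \overline E$ with $g > \0$. I would choose $x_0$ with $g(x_0) > 0$ and, by local compactness, select a relatively compact open $V \ni x_0$ together with $\delta > 0$ such that $g \ge 2\delta$ on $\overline V$. For each pair $(K, m)$ with $K$ a compact subset of $X$ containing $\overline V$ and $m \in \N$ satisfying $1/m < \delta/2$, density of $E$ in $\overline E$ in the compact-open topology furnishes $h_{K, m} \in E$ with $\sup_K |h_{K, m} - g| < 1/m$. Organize $\{h_{K, m}^+\}$ into the downward directed net $f_F = \bigwedge_{(K, m) \in F} h_{K, m}^+ \in E_+$ indexed by finite nonempty subsets $F$ of such pairs. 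Since $h_{K, m} \ge g - 1/m \ge 3\delta/2$ on $\overline V$, each $f_F \ge 3\delta/2$ on $\overline V$, so Proposition \ref{inf} supplies a strictly positive $\Co(X)$-lower bound for $\{f_F\}$.

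Invoking regularity of $E$ in $\Co(X)$ in its equivalent contrapositive form---a downward directed family in $E_+$ admitting a strictly positive $\Co(X)$-lower bound also admits a strictly positive $E$-lower bound---yields $f^* \in E_+ \setminus \{\0\}$ with $f^* \le f_F$ for every $F$, and hence $f^* \le h_{K, m}^+$ for every $(K, m)$. For any $x \in X$ and $\varepsilon > 0$, picking $K \supseteq \overline V \cup \{x\}$ and $m$ with $1/m < \varepsilon$ gives $h_{K, m}^+(x) \le g(x) + 1/m \le g(x) + \varepsilon$ (using $g(x) \ge 0$), so $f^*(x) \le g(x) + \varepsilon$; letting $\varepsilon \to 0$ yields $f^* \le g$, and we get $\0 < f^* \le g$ with $f^* \in E$, establishing order density. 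The step I expect to require the most care is verifying this contrapositive form of regularity for a downward directed (rather than merely sequential) net; this follows by unwinding the definition together with the observation that $f_F \ge \0$ makes ``$\inf_E f_F = \0$'' equivalent to ``$\{f_F\}$ has no strictly positive lower bound in $E$.''
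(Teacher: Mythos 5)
Your proof is correct, and while part (i) is essentially the paper's own argument (the paper reduces to a base of relatively compact open sets instead of shrinking $U$ to a relatively compact $V\subset\overline V\subset U$, which is the same idea), your treatment of the necessity in part (ii) takes a genuinely different route. The paper proves regularity of $\overline E$ first and directly: for relatively compact open $U$ it compares $G_U^E=\{f\in E:\ f\ge\1_U\}$ with $G_U^{\overline E}$, shows $\overline{G_U^E}=\overline{G_U^{\overline E}}$ by passing through the open set of functions exceeding $1$ on $\overline U$, and uses regularity of $E$ (in exactly the ``positive lower bound for a downward directed positive family'' form you isolate and correctly justify) to produce $g\in E$ with $\0<g\le G_U^{\overline E}$; this verifies the criterion of Proposition \ref{sub}(i) for $\overline E$ and, as a byproduct, gives order density of $E$ in $\overline E$. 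You instead prove order density of $E$ in $\overline E$ first, via compact-open approximants $h_{K,m}$ of a given $g\in\overline E_+$, whose finite infima form a decreasing net in $E_+$ bounded below by $\tfrac{3\delta}{2}$ on a relatively compact $\overline V$; a strictly positive $\Co\left(X\right)$-lower bound can be taken as $\tfrac{3\delta}{2}u$ with $u\in\left(\0,\1_V\right]$, which is slightly more direct than citing Proposition \ref{inf}, though your use of it is fine since for families in $\Co\left(X\right)_+$ the condition $\inf\ne\0$ is equivalent to having a strictly positive lower bound. Regularity of $E$ then yields $f^*\in E_+\setminus\{\0\}$ below all $h_{K,m}^+$, and your choice of large $K$ and $m$ forces $f^*\le g$. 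Regularity of $\overline E$ then comes for free from Corollary \ref{almo}(ii) applied to the inclusion of $\overline E$ into $\Co\left(X\right)$, since $E$ is dense (hence almost majorizing) and order dense in $\overline E$ and its inclusion into $\Co\left(X\right)$ is order continuous by regularity; no circularity arises, as that corollary belongs to Section \ref{d}. What each approach buys: yours supplies by hand precisely the ingredient (order density of $E$ in $\overline E$) that the abstract Corollary \ref{nreg} could not provide here because $\Co\left(X\right)$ is not a Banach lattice, and then lets the abstract machinery finish; the paper's proof stays concrete within Section \ref{s} and extracts both conclusions from a single construction.
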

\begin{proof}
(i): We only need to prove necessity. Let $U\subset X$ be open nonempty and relatively compact. Since $E$ is weakly Urysohn, there is $f\in E_{+}\backslash\left\{\0\right\}$ which vanishes outside of $U$. Since $U$ is relatively compact, there is $x\in U$ such that $0<a=f\left(x\right)=\|f\|$. But then $\0<\frac{1}{a}f\le \1_{U}$. Since $U$ was chosen arbitrarily, from part (ii) of Proposition \ref{sub} we conclude that $E$ is order dense.\medskip

(ii): In the light of Remark \ref{tran}, it is enough to prove necessity. Let us start with regularity of $\overline{E}$.

For an open $U\subset X$ let $G_{U}^{E}=\left\{f\in E,~f\ge \1_{U}\right\}$. From the comment before the proposition and part (i) of Proposition \ref{sub}, it is given that $\inf_{E}G^{E}_{U}\ne\0$, for every open nonempty relatively compact $U\subset X$, and we need to show that $\inf_{\overline{E}}G^{\overline{E}}_{U}\ne\0$, for every open nonempty relatively compact $U\subset X$.

Let $H=\left\{f\in \Co\left(X\right),~\forall x\in\overline{U},~  f\left(x\right)>1\right\}$, which is an open set in $\Co\left(X\right)$, since $\overline{U}$ is compact. Clearly, $H\cap E_{+}\subset G^{E}_{U}$, but we also have $G_{U}\subset\overline{H\cap E_{+}}$, since every $f\in G^{E}_{U}$ is the limit of the sequence $\left\{\frac{n+1}{n}f\right\}_{n\in\N}\subset H\cap E_{+}$. Moreover, since $H$ is open, we have $\overline{G^{E}_{U}}=\overline{H\cap E_{+}}=\overline{H\cap \overline{E}_{+}}=\overline{G^{\overline{E}}_{U}}$.

Assume that $U$ is such that $G^{\overline{E}}_{U}\ne\varnothing$. Then, $G^{E}_{U}\ne\varnothing$ and since $E$ is regular, from part (ii) of Proposition \ref{sub}, there is $g\in E\subset \overline{E}$ such that $\0<g\le G^{E}_{U}$. As the set $\left\{f\in \Co\left(X\right),~f\ge g\right\}$ is closed in $\Co\left(X\right)$, we have that $g\le \overline{G^{E}_{U}}\supset G^{\overline{E}}_{U}$, and so $\inf_{\overline{E}}G^{\overline{E}}_{U}\ne\0$.\medskip

Let us now show that $E$ is order dense in $\overline{E}$. Let $f\in \overline{E}_{+}\backslash \left\{\0\right\}$ and let $U\subset X$ be an open relatively compact nonempty set such that $f\ge\varepsilon\1_{U}$, for some $\varepsilon>0$. Then $\frac{1}{\varepsilon}f\in G^{\overline{E}}_{U}$, and so the latter set is nonempty. Hence, as was shown above, there is $g\in E$ such that $\0<g\le G^{\overline{E}}_{U}$, from where $\0<\varepsilon g\le f$. Since $f$ was chosen arbitrarily, we conclude that $E$ is order dense in $\overline{E}$.
\end{proof}

Note that in particular, a dense regular sublattice of $\Co\left(X\right)$ has to be weakly Urysohn. Consider examples of a dense sublattice that is not regular, and a dense regular sublattice that is not Urysohn.

\begin{example}\label{per}
Let $E$ be the set of all $f\in\Co\left(\R\right)$ which have an integer period, i.e. there is $n\in\N$ such that $f\left(x+n\right)=f\left(x\right)$, for all $x\in\R$. It is clear that if $f\in E$, then $\left|f\right|\in E$ and $\alpha f\in E$, for every $\alpha\in \R$. If $f,g\in E$, there are $n,m\in\N$ such that $f\left(x+n\right)=f\left(x\right)$ and $g\left(x+m\right)=g\left(x\right)$, for all $x\in\R$. Then, we also have $f\left(x+mn\right)=f\left(x\right)$ and $g\left(x+mn\right)=g\left(x\right)$, for all $x\in\R$, and so $f+g\in E$. Hence, $E$ is a sublattice of $\Co\left(\R\right)$, and it is easy to see that it strictly separates points of $\R$. Hence, $\overline{E}=\Co\left(\R\right)$ is trivially regular. If $E$ were regular, it would have to be weakly Urysohn, which it is not as $E_{\R\backslash\left(-1,1\right)}=\left\{\0\right\}$. Thus, $E$ is not regular.
\qed\end{example}

\begin{example}\label{perd}
Let $E$ be the set of all $f\in\Co\left(\R\right)$ for which there is $n\in\N$ such that $f\left(m\right)=f\left(0\right)$, whenever $m\in\Z$ with $\left|m\right|\ge n$. It is easy to see that $E$ is a dense order dense sublattice of $\Co\left(\R\right)$. However, if $U=\left(-1,1\right)$ and $f\in E$ vanishes outside of $U$, then $f\left(0\right)=0$, and so $E$ is not Urysohn.
\qed\end{example}

Let us focus now on dense sublattices of $\Co_{b}\left(X\right)$.

\begin{proposition}\label{dense}Let $E$ be a dense sublattice of $\Co_{b}\left(X\right)$. Then:
\item[(i)] $E$ is an order dense Urysohn sublattice.
\item[(ii)] If $H$ is a closed ideal in $\Co_{b}\left(X\right)$, then $\overline{E\cap H}=H$.
\item[(iii)] $E_{A}$ is a band in $E$ if and only if $\overline{A}$ is regularly closed; if $X$ is normal, then $E_{A}$ is a projection band in $E$ if and only if $\overline{A}$ is clopen.
\item[(iv)] If $H$ is a band / projection band in $E$, there is a (unique) regularly closed / clopen $A$ such that $\overline{H}=\Co_{b}\left(X\right)_{A}$ and $H=E_{A}$.
\end{proposition}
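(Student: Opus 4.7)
By Corollary \ref{sul}, applied to the compact singleton $K = \{x\} \subset U$ with $\delta = 2$, there is $f \in E$ with $\1_{\{x\}} \le f \le 2\1_U$. Hence $f$ vanishes on $X \setminus U$ and $f(x) \ge 1$, establishing the Urysohn property. Moreover $f/2 \in E$ satisfies $\0 < f/2 \le \1_U$, so order density follows from part (ii) of Proposition \ref{sub}.

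\textbf{Part (ii).} Via the isomorphism $\Co_{\8}(X) \cong \Co(\beta X)$ induced by the inclusion $X \hookrightarrow \beta X$, the sublattice $E$ corresponds to a dense sublattice $E'$ of $\Co(\beta X)$, and by Example \ref{sch} the closed ideal $H$ corresponds to $\Co(\beta X)_K$ for some closed (hence compact) $K \subset \beta X$. Proposition \ref{sd} applied inside the compact space $\beta X$ then shows that $E'_K$ is dense in $\Co(\beta X)_K$, and translating back yields $\overline{E \cap H} = H$.

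\textbf{Part (iii).} The first equivalence is immediate from part (i) of Proposition \ref{ur}, since $E$ is Urysohn by (i), and the forward direction of the projection-band case is part (ii) of the same proposition. For the converse, assume $X$ is normal and $\overline{A}$ is clopen; replacing $A$ by $\overline{A}$, assume $A$ itself is closed and open. Corollary \ref{sul} applied with $K = U = A$ and $\delta = 2$ produces $e \in E$ with $\1_A \le e \le 2\1_A$. For $f \in E_+$ and any integer $N \ge \|f\|_\8$, the element $f \wedge Ne \in E$ equals $f$ on $A$ (since $Ne \ge N \ge f$ there) and equals $0$ on $X \setminus A$ (since $e$ vanishes there and $f \ge \0$); thus $f \wedge Ne = f\1_A \in E$. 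Writing $f = f^+ - f^-$ extends this to all of $E$, and the analogous construction for the clopen set $X \setminus A$ produces $f \1_{X \setminus A} \in E$. The decomposition $f = f\1_A + f\1_{X \setminus A}$, together with the Urysohn disjoint-complement identity $E_A^d = E_{\overline{X \setminus \overline{A}}} = E_{X \setminus A}$, realizes $E_A$ as a projection band in $E$.

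\textbf{Part (iv).} Given a band $H$ in $E$, let $A = \bigcap_{f \in H} f^{-1}(0)$ and set $B = \overline{\Int A}$. The formula $G^d = E_{\overline{X \setminus \bigcap_{f \in G} f^{-1}(0)}}$ yields $H^d = E_{\overline{X \setminus A}}$, and applying it a second time together with the Urysohn identity $\bigcap_{g \in E_C} g^{-1}(0) = \overline{C}$ gives $H = H^{dd} = E_{\overline{\Int A}} = E_B$. The argument of part (i) of Proposition \ref{ur} shows that $B$ is canonically closed, and by (ii) we obtain $\overline{H} = \overline{E_B} = \Co_{\8}(X)_B$; uniqueness is automatic because the Urysohn identity forces $B = \bigcap_{f \in E_B} f^{-1}(0)$ whenever $B$ is closed. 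If $H$ is additionally a projection band, part (ii) of Proposition \ref{ur} forces $B$ to be clopen. The main obstacle is the converse direction of (iii): in the absence of normality there is no evident way to manufacture a cutoff $e \in E$ that captures a clopen set $A$, and without such an element one cannot stay inside $E$ when lattice-truncating an arbitrary $f$ to produce the required projection.
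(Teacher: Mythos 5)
Your proof is correct, and for parts (i) and (ii) it is the paper's argument (part (ii) in the paper is phrased as ``assume $X$ is compact'', which is exactly your explicit passage to $\beta X$ via Example \ref{sch} and Proposition \ref{sd}). The differences are in (iii) and (iv). For the sufficiency in (iii) the paper first replaces $E$ by $M_{e}^{-1}E$ so that $\1\in E$, then invokes the Sublattice Urysohn lemma to get the exact indicators $\1_{\overline{A}},\1_{X\backslash\overline{A}}\in E$ and splits $f$ by taking meets with them; you instead pull an approximate indicator $e\in E\cap\left[\1_{A},2\1_{A}\right]$ directly from Corollary \ref{sul} and truncate, $f\wedge Ne=f\1_{A}$ for $N\ge\|f\|_{\8}$ -- a clean variant that bypasses the reduction to $\1\in E$ (you use implicitly, and could say explicitly, that $E_{A}$ is a band because clopen sets are canonically closed, so the decomposition $E=E_{A}+E_{A}^{d}$ indeed yields a projection band). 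For (iv) the paper goes through the ambient lattice: $H=H^{dd}_{\Co\left(X\right)}\cap E$ by Proposition \ref{dis}, then Corollary \ref{bpb} identifies $H^{dd}_{\Co\left(X\right)}=\Co\left(X\right)_{A}$; you instead compute $H^{dd}$ intrinsically in $E$ from the zero-set formula for disjoint complements plus the Urysohn identity $\bigcap_{g\in E_{C}}g^{-1}\left(0\right)=\overline{C}$, using $H=H^{dd}$ for a band in the Archimedean lattice $E$. Both routes are equally short; yours stays inside $E$ and makes the uniqueness of $A$ transparent, while the paper's reuses its structural results on $\Co\left(X\right)$. Finally, your closing sentence about ``the main obstacle'' in the non-normal case is superfluous rather than a gap: the proposition asserts the clopen-implies-projection-band direction only under normality, which is exactly what you proved.
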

\begin{proof}
(i): follows immediately from Corollary \ref{sul}.

(ii): For this claim we may assume that $X$ is compact, so that $\Co_{b}\left(X\right)=\Co\left(X\right)$. Moreover, from Corollary \ref{id}, there is a closed $A\subset X$ such that $H=\Co\left(X\right)_{A}$. But then $E_{A}=E\cap H$ is dense in $H=\Co\left(X\right)_{A}$ due to Proposition \ref{sd}.\medskip

(iii): In the light of (i) and parts (i) and (ii) of Proposition \ref{ur} we only need to prove sufficiency in the second claim. As was mentioned earlier, every dense sublattice of $\Co_{b}\left(X\right)$ contains a strong unit, and so there is $e\in E_{+}$ such that $M_{e}$ is an automorphism of $\Co_{b}\left(X\right)$. Note that this automorphism also preserves zero-sets of functions, and so by replacing $E$ with $M_{e}^{-1}E$ without loss of generality we may assume that $\1\in E$. Then, from the Sublattice Urysohn lemma $E$ contains all indicators including $\1_{\overline{A}}$ and $\1_{X\backslash\overline{A}}$. If $f\in E$ with $\|f\|\le 1$, then $f\in E_{\overline{A}}+E_{X\backslash\overline{A}}$ by virtue of
$$f=f_{+}\wedge \1_{\overline{A}}-f_{-}\wedge \1_{\overline{A}} + f_{+}\wedge\1_{X\backslash\overline{A}} - f_{+}\wedge\1_{X\backslash\overline{A}}.$$

(iv): From Proposition \ref{dis} we have $H=H^{dd}\cap E$. Since $H^{dd}$ is a band in $\Co\left(X\right)$, from Corollary \ref{bpb}, there is  a regularly closed $A\subset X$ such that $H^{dd}=\Co\left(X\right)_{A}$. Hence, $H=\Co\left(X\right)_{A}\cap E=E_{A}$, and from (ii) we have $\overline{H}=\Co_{b}\left(X\right)_{A}$. If on top of that $H$ is a projection band in $E$, from part (ii) of Proposition \ref{ur}, $A$ is clopen.
\end{proof}

It follows from combining part (i) of Proposition \ref{dense} with parts (iii) and (iv) of Proposition \ref{ur} that $X$ is totally / extremally disconnect provided that $\Co_{b}\left(X\right)$ has a dense PPP / PP sublattice. Conversely, if $X$ is compact and totally disconnected, then the lattice of simple continuous functions strictly separates points, and so it is dense; it is not hard to verify that this sublattice has PPP. Hence, we arrive at the following characterization of totally disconnected compact spaces.

\begin{corollary}
If $X$ is compact, then it is totally disconnected if and only if $\Co\left(X\right)$ contains a dense sublattice with PPP.
\end{corollary}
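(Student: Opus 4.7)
The plan is to assemble the equivalence directly from the preceding machinery; the remark immediately preceding the corollary already sketches both directions.

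For necessity, suppose $E\subset\Co\left(X\right)$ is a dense PPP sublattice. Since $X$ is compact, $\Co\left(X\right)=\Co_{\8}\left(X\right)$, so part (i) of Proposition \ref{dense} guarantees that $E$ is an Urysohn sublattice, and then part (iv) of Proposition \ref{ur} applies verbatim and delivers total disconnectedness of $X$.

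For sufficiency, assume $X$ is compact and totally disconnected, and let $E$ be the sublattice of all simple continuous functions on $X$. This is indeed a sublattice (the level sets of a simple continuous function are automatically clopen, and finite unions, intersections, and complements of clopen sets are clopen, so $E$ is closed under the lattice and linear operations), and since $X$ is compact Hausdorff and totally disconnected it is zero-dimensional, so for every pair $x\ne y$ there is a clopen $A$ with $x\in A$, $y\notin A$; then $\1_{A},\1_{X\backslash A}\in E$ strictly separate $x,y$, and the Stone-Weierstrass theorem of Section \ref{x} yields $\overline{E}=\Co\left(X\right)$. To verify PPP, fix $f\in E$ and set $A=X\backslash f^{-1}\left(0\right)$, which is clopen. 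From the identity $\left\{f\right\}^{d}=E_{\overline{X\backslash f^{-1}\left(0\right)}}$ recorded in Section \ref{x} we obtain $\left\{f\right\}^{d}=E_{A}$; and using $\1_{X\backslash A}\in E_{A}$ to witness that the common zero set of $E_{A}$ is exactly $A$, a second disjoint complement gives $\left\{f\right\}^{dd}=E_{X\backslash A}$. For any $g\in E$ the decomposition $g=g\cdot\1_{X\backslash A}+g\cdot\1_{A}$ places the first summand into $E_{X\backslash A}=\left\{f\right\}^{dd}$ and the second into $E_{A}=\left\{f\right\}^{d}$; both summands are themselves simple continuous functions and so lie in $E$. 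Thus $E=\left\{f\right\}^{d}+\left\{f\right\}^{dd}$, which exhibits $\left\{f\right\}^{dd}$ as a projection band in $E$.

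The only point that requires any real attention is the last one: one must notice that membership in $E$ is preserved under multiplication by indicators of clopen sets, so that the projection decomposition witnessing PPP takes place inside $E$ rather than merely in the ambient $\Co\left(X\right)$. Everything else reduces to direct citations of the Stone-Weierstrass theorem from Section \ref{x}, the Urysohn characterizations of Proposition \ref{ur}, and the fact from Proposition \ref{dense} that dense sublattices of $\Co_{\8}\left(X\right)$ are automatically Urysohn.
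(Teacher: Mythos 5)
Your proof is correct and follows essentially the same route as the paper: necessity is exactly the paper's combination of part (i) of Proposition \ref{dense} with part (iv) of Proposition \ref{ur}, and for sufficiency you use the same dense sublattice of simple continuous functions, merely writing out the PPP verification that the paper leaves to the reader. The only blemish is a harmless label swap at the end: $g\cdot\1_{X\backslash A}$ vanishes on $A$ and hence lies in $E_{A}=\left\{f\right\}^{d}$, while $g\cdot\1_{A}$ lies in $E_{X\backslash A}=\left\{f\right\}^{dd}$, the reverse of what you wrote, but the desired identity $E=\left\{f\right\}^{d}+\left\{f\right\}^{dd}$ follows all the same.
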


It follows from Corollary \ref{nreg} that if $E$ is a regular sublattice of $\Co_{b}\left(X\right)$, then $\overline{E}$ is also regular. It turns out that the converse to this fact is partially true.

\begin{proposition}\label{rden}
If $E$ is sublattice of $\Co_{b}\left(X\right)$ that contains a strong unit of $\Co_{b}\left(X\right)$, then $E$ is order dense in $\overline{E}$. In particular, $E$ is regular if and only if  $\overline{E}$ is.
\end{proposition}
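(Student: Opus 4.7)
The plan is to establish order density of $E$ in $\overline{E}$ directly, and then invoke Corollary \ref{nreg} (with the ambient Banach lattice $F=\Co_{\8}(X)$) to obtain the ``in particular'' clause.

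First I would perform a routine reduction to the case $\1\in E$. Let $u\in E$ be a strong unit of $\Co_{\8}(X)$, so that $u\ge\delta\1$ for some $\delta>0$. Then $M_{u}$ is a topological-and-lattice automorphism of $\Co_{\8}(X)$ (its inverse being $M_{1/u}$, which makes sense because $1/u\in\Co_{\8}(X)$). Therefore $\widetilde{E}:=M_{1/u}E$ is a sublattice of $\Co_{\8}(X)$, its closure is $M_{1/u}\overline{E}$, and $\widetilde{E}$ is order dense in $M_{1/u}\overline{E}$ if and only if $E$ is order dense in $\overline{E}$. Since $u\in E$, we have $\1=M_{1/u}u\in\widetilde{E}$. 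So replacing $E$ by $\widetilde{E}$, I may assume $\1\in E$.

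The heart of the argument is the following construction. Take any $f\in\overline{E}_{+}\setminus\{\0\}$ and a point $x_{0}\in X$ with $f(x_{0})>0$; choose $\varepsilon>0$ with $f(x_{0})>2\varepsilon$. Since $f\in\overline{E}$ there exists $h\in E$ with $\|h-f\|_{\8}<\varepsilon$, i.e.\ $-\varepsilon\1\le h-f\le \varepsilon\1$. Set
\[
g:=(h-\varepsilon\1)^{+}.
\]
Because $\1\in E$ and $h\in E$, we have $g\in E$. From $h-\varepsilon\1\le f$ and $f\ge\0$ I get $g\le f$. Finally, $h(x_{0})\ge f(x_{0})-\varepsilon>\varepsilon$, so $g(x_{0})>0$, and hence $\0<g\le f$. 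This proves $E$ is order dense in $\overline{E}$.

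For the ``in particular'' clause: since $\Co_{\8}(X)$ is a Banach lattice, Corollary \ref{nreg} gives that a sublattice $E$ of $\Co_{\8}(X)$ is regular if and only if $\overline{E}$ is regular and $E$ is order dense in $\overline{E}$. The order density was just established unconditionally (under the strong-unit hypothesis), so $E$ is regular iff $\overline{E}$ is. The only real obstacle is the possibility that $\1\notin E$, which is what forces the opening reduction via $M_{1/u}$; once that is in place the uniform approximation and truncation trick is routine.
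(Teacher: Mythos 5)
Your proof is correct, but it reaches the key order-density claim by a genuinely different and more elementary route than the paper. The paper also first reduces to $\1\in E$ via a multiplication operator, but then observes that $E$ has only simple constraints, invokes Proposition \ref{quot} to represent $\overline{E}$ isometrically as $\Co\left(Y\right)$ for a compact quotient $Y$, and deduces order density of $E$ in $\overline{E}$ from part (i) of Proposition \ref{dense} (dense sublattices of $\Co_{\8}$ are order dense, which rests on the Sublattice Urysohn lemma); your argument bypasses all of that representation machinery with the direct truncation $g=\left(h-\varepsilon\1\right)^{+}$ of a norm $\varepsilon$-approximant $h\in E$ of $f\in\overline{E}_{+}$, which indeed satisfies $\0<g\le f$ since $h-\varepsilon\1\le f$, $f\ge\0$ and $g\left(x_{0}\right)\ge f\left(x_{0}\right)-2\varepsilon>0$. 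This is shorter and self-contained (the truncation trick is the same device hidden inside the paper's Urysohn-type lemmas), whereas the paper's detour through $\Co\left(Y\right)$ reuses machinery it needs elsewhere (Proposition \ref{quot}, Lemma \ref{dens}) and yields extra structural information about $\overline{E}$. For the ``in particular'' clause you quote Corollary \ref{nreg} (legitimate and non-circular, since $\Co_{\8}\left(X\right)$ is a Banach lattice and that corollary is proved in Section 3), while the paper handles the same point by combining the remark preceding the proposition (which already invoked Corollary \ref{nreg}) with Remark \ref{tran}; the two are equivalent. Your reduction step is also sound: $M_{1/u}$ is a lattice automorphism and homeomorphism of $\Co_{\8}\left(X\right)$, so it commutes with closures and preserves order density.
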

\begin{proof}
As in the proof of Proposition \ref{dense}, we may assume that $X$ is compact and $\1\in E$. Obviously, $E$ vanishes nowhere on $X$ and if $\delta_{x}^{E}=\alpha\delta_{y}^{E}$, for some $x,y\in X$, then $\alpha=\alpha\delta_{y}^{E}\left(\1\right)=\delta_{x}^{E}\left(\1\right)=1$, and so $E$ has only simple constraints. Hence, from Proposition \ref{quot}, there is a continuous surjection $\varphi$ from $X$ onto a compact space $Y$, such that $C_{\varphi}$ is an isometric isomorphism from $\Co\left(Y\right)$ onto $\overline{E}$. Then, $C_{\varphi}^{-1}E$ is a dense sublattice of $\Co\left(Y\right)$, and so it is order dense, according to part (i) of Proposition \ref{dense}. Since $C_{\varphi}$ facilitates an isomorphism between $\overline{E}$ and $\Co\left(Y\right)$, it follows that $E$ is order dense in $\overline{E}$. The second claim follows from Remark \ref{tran}.
\end{proof}

Let us show that without the assumption that $E$ contains a strong unit of $\Co_{b}\left(X\right)$ the preceding proposition is false.

\begin{example}\label{prrd}
Let $E$ be a sublattice of $c_{0}=\Co_{0}\left(\N\right)$ that consists of all $f:\N\to\R$ for which there is $n\in\N$ such that $f\left(m\right)=\frac{f\left(1\right)}{m}$, for every $m\ge n$. It is easy to check that $E$ is indeed a vector lattice, which even has a strong unit $e$, defined by $e\left(m\right)=\frac{1}{m}$, for $m\in\N$. Moreover, since $E$ strictly separates points of $\N$ it is dense in $\Co_{0}\left(\N\right)$. Hence, $\overline{E}$ is regular in $\Co_{b}\left(\N\right)$. On the other hand, $E$ is also dense in $\Co\left(\N\right)$, but not weakly Urysohn (as $E_{\N\backslash\left\{1\right\}}=\left\{\0\right\}$), and so not regular, according to the comment after Theorem \ref{sublo}.
\qed\end{example}

We conclude the section with another density result. For a sublattice $E$ of $\Co\left(X\right)$ and $A_{0},...,A_{n}\subset X$ let $E_{A_{0},...,A_{n}}$ be the set of all elements of $E_{A_{0}}$, which are constant on $A_{k}$, for every $k\in\overline{1,n}$. It is clear that $\Co_{b}\left(X\right)_{A_{0},...,A_{n}}$ has only simple constraints, and so from Proposition \ref{quot}, there is a continuous surjection $\varphi:\beta X\to Y$, where $Y$ is compact, such that $C_{\varphi}$ is an isometric homomorphism from $\Co\left(Y\right)_{\varphi\left(A_{0}\right)}$ onto $\Co\left(X\right)_{A_{0},...,A_{n}}$. Note that $\varphi\left(A_{0}\right)$ is either empty, or a singleton.

\begin{lemma}\label{dens}Let $E$ be a dense sublattice of $\Co_{b}\left(X\right)$ that contains $\1$. Then $E_{A_{0},...,A_{n}}$ is dense in $\Co_{b}\left(X\right)_{A_{0},...,A_{n}}$, for every $A_{0},...,A_{n}\subset X$. In particular, in the notations from above, $C_{\varphi}^{-1}E_{A_{0},...,A_{n}}$ is dense in $\Co\left(Y\right)_{\varphi\left(A_{0}\right)}$.
\end{lemma}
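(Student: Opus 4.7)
The plan is to reduce to the case where $X$ is compact Hausdorff and the $A_k$'s are pairwise disjoint closed subsets, and then construct the approximation of $f$ as $g_1 + r$, where $g_1 \in E$ realizes the prescribed constant values on each $A_k$ via the Sublattice Urysohn lemma, and $r \in E$ is a small residual vanishing on $A_0 \cup \dots \cup A_n$, supplied by Proposition \ref{sd}.

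First I would pass to the Stone-Cech compactification: via the identification $\Co_{\8}(X) = \Co(\beta X)$, the sublattice $E$ becomes a dense sublattice of $\Co(\beta X)$ containing $\1$, and replacing each $A_k$ by its closure $\overline{A_k} \subset \beta X$ changes neither $\Co_{\8}(X)_{A_0, \dots, A_n}$ nor $E_{A_0, \dots, A_n}$, since continuity on $\beta X$ propagates vanishing and constancy from $A_k$ to $\overline{A_k}$. So I may assume $X$ is compact Hausdorff and each $A_k$ is closed. Next, I merge any $A_i, A_j$ with $i, j \geq 1$ that intersect, and any $A_k$ ($k \geq 1$) meeting $A_0$ into $A_0$: in both cases the defining constraints of $E_{A_0, \dots, A_n}$ and $\Co(X)_{A_0, \dots, A_n}$ force the relevant constant values to agree, so the merger leaves both sides of the target equation unchanged. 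Hence I may assume $A_0, A_1, \dots, A_n$ are pairwise disjoint closed subsets of the compact space $X$.

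Fix $f \in \Co(X)_{A_0, \dots, A_n}$, set $c_k := f|_{A_k}$ (so $c_0 = 0$), and fix $\epsilon > 0$. By normality of $X$ I select pairwise disjoint open sets $U_0, U_1, \dots, U_n$ with $A_k \subset U_k$, and for each $k \geq 1$ an open $W_k$ with $A_k \subset W_k \subset \overline{W_k} \subset U_k$. Applying the Sublattice Urysohn lemma (since $\1 \in E$ is dense in $\Co(X)$ and $X$ is compact normal), I obtain $\rho_k \in E \cap [\1_{\overline{W_k}}, \1_{U_k}]$, which forces $\rho_k \equiv 1$ on $\overline{W_k} \supset A_k$ and $\rho_k \equiv 0$ outside $U_k$; in particular $\rho_k$ vanishes on every $A_j$ with $j \neq k$ (including $A_0$). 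Setting $g_1 := \sum_{k=1}^{n} c_k \rho_k \in E$, one checks that $g_1|_{A_k} = c_k$ for every $k \geq 1$ and $g_1|_{A_0} = 0$.

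The residual $r_0 := f - g_1 \in \Co(X)$ then satisfies $r_0|_{A_k} = c_k - c_k = 0$ for each $k \in \{0, 1, \dots, n\}$, so $r_0 \in \Co(X)_K$ where $K := A_0 \cup A_1 \cup \dots \cup A_n$ is compact. By Proposition \ref{sd} applied to the dense sublattice $E$ of $\Co(X)$ and the compact set $K$, there is $r \in E_K$ with $\|r - r_0\|_{\infty} < \epsilon$. Then $g := g_1 + r \in E$ satisfies $g|_{A_0} = 0$ and $g|_{A_k} = c_k$ (constant) for $k \geq 1$, so $g \in E_{A_0, \dots, A_n}$, and $\|g - f\|_{\infty} = \|r - r_0\|_{\infty} < \epsilon$, proving the main claim. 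The ``in particular'' assertion is immediate: as $C_{\varphi}$ is an isometric isomorphism from $\Co(Y)_{\varphi(A_0)}$ onto $\Co_{\8}(X)_{A_0, \dots, A_n}$, density of $E_{A_0, \dots, A_n}$ transfers under $C_{\varphi}^{-1}$ to density of $C_{\varphi}^{-1} E_{A_0, \dots, A_n}$ in $\Co(Y)_{\varphi(A_0)}$. The main obstacle I anticipated was producing $E$-elements that are \emph{exactly} (not merely approximately) constant on each $A_k$ with prescribed values; the Sublattice Urysohn lemma, which yields $\rho_k \equiv 1$ identically on a specified closed set containing $A_k$, is precisely the tool that makes this possible.
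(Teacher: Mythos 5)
Your proof is correct. The reductions (passing to $\beta X$, replacing each $A_k$ by its closure, merging intersecting constraint sets, absorbing into $A_0$ any $A_k$ that meets it) are exactly the ones the paper makes, and every step of your construction checks out: the $\rho_k$ supplied by the Sublattice Urysohn lemma vanish on all $A_j$ with $j\ne k$ because the $U_k$ are pairwise disjoint, so $g_1=\sum_{k\ge 1}c_k\rho_k$ realizes the constants exactly, the residual $f-g_1$ lies in $\Co\left(X\right)_{K}$ with $K=A_0\cup\dots\cup A_n$ compact, and Proposition \ref{sd} (legitimately applied, since after the reduction $X$ is compact and the compact-open topology is the sup-norm topology) gives the approximant $r\in E_K$; the sum $g_1+r$ then lies in $E_{A_0,\dots,A_n}$ and is $\varepsilon$-close to $f$. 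Where you diverge from the paper is in the main density argument: after the same reduction, the paper collapses each $A_k$ to a point of a quotient space $Y$, uses the Sublattice Urysohn lemma to show that $C_{\varphi}^{-1}E$ strictly separates points of $Y$, and then concludes via Stone--Weierstrass through part (ii) of Proposition \ref{dense}; you instead build the approximating element of $E_{A_0,\dots,A_n}$ explicitly by the decomposition $f\approx g_1+r$ and quote Proposition \ref{sd} directly, only invoking the quotient picture for the ``in particular'' statement, exactly as needed. The two routes rest on the same two tools (the Sublattice Urysohn lemma and the Stone--Weierstrass-based density of $E_K$ in $\Co\left(X\right)_K$), but the paper's version is more structural and dovetails with its later use of the quotient map, while yours is more elementary and hands-on, avoiding the verification of strict separation on $Y$ altogether.
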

\begin{proof}
As was explained in Example \ref{sch}, we may assume that $X$ is compact. Furthermore, we can assume that all $A_{k}$ are closed and disjoint. Indeed, replacement of $A_{k}$ with $\overline{A_{k}}$ does not affect the sublattices in question, and if $A_{i}\cap A_{j}\ne\varnothing$, elements of these sublattices have to be constant on $A_{i}\cup A_{j}$, and so we can replace $A_{i}$ and $ A_{j}$ with $A_{i}\cup A_{j}$. The space $Y$ is the result of collapsing each of $A_{k}$ into a point (say $y_{k}$). From part (ii) of Proposition \ref{dense}, it is enough to show that $C_{\varphi}^{-1}E$ strictly separates points of $Y$.

For distinct $y,z\in Y$ one or both of them may belong to $\left\{y_{0},...,y_{n}\right\}$. Let $Z=\left\{z\right\}\cup \left\{y_{0},...,y_{n}\right\}\backslash \left\{y\right\}$. Then the sets $\varphi^{-1}\left(y\right)$ and $\varphi^{-1}\left(Z\right)$ are closed and disjoint, and so from the sublattice Urysohn lemma there is $f\in E$ which is equal $1$ on $\varphi^{-1}\left(y\right)$ and vanishes on $\varphi^{-1}\left(Z\right)$. Then, $f\in \Co\left(X\right)_{A_{0},...,A_{n}}$ and for $g=C_{\varphi}^{-1}f$ we have $g\left(y\right)=1$ and $g\left(z\right)=0$. Since $y,z$ were chosen arbitrarily, the result follows.
\end{proof}

\begin{remark}
It is possible prove a similar result for infinite number of $A_{k}$. However, one has to take a special care for separation of these sets. In particular, if there is a collection of sets $\left\{A_{i}\right\}_{i\in I}$ such that for every $i\in I$ there is $f\in \left[\1_{A_{i}}, \1_{X\backslash\bigcup\limits_{j\ne i} A_{j}}\right]$, and $i_{0}\in I$, then the set of elements of $E$ that vanish on $A_{i_{0}}$ and are constant on every $A_{i}$ is dense in the set of all bounded continuous functions on $X$ with this property.
\qed\end{remark}

\section{Some classes of continuous maps between topological spaces.}\label{m}

In this section $X$ and $Y$ are Tychonoff spaces. We will consider several classes of continuous maps between $X$ and $Y$. Most of them appear in the literature under various (often conflicting) names and so we chose to gather the most relevant information about them here, rather than leave it in the references. Let us start with the maps that do not make ``large'' sets ``small''. Namely, we will call a continuous $\varphi:X\to Y$:
\begin{itemize}
\item \emph{quasi-open} if $\Int\varphi\left(U\right)\ne\varnothing$, for every open nonempty $U\subset X$;
\item \emph{almost open} if $\Int\overline{\varphi\left(U\right)}\ne\varnothing$, for every open nonempty $U\subset X$;
\item \emph{(strongly) skeletal} if it is almost (quasi-) open onto its image.
\end{itemize}

Obviously, every quasi-open map is almost open. It is easy to see that a quasi-open map is almost open, provided it is locally close, i.e. every $x\in X$ has a closed neighborhood $V$ such that $\left.\varphi\right|_{V}$ is a closed map. In particular, this is the case if $X$ is locally compact, because then every $x\in X$ has a compact neighborhood, and every continuous map on a compact space is closed. A restriction of an almost / quasi-open map to an open subset of $X$ is almost / quasi-open map. Similar implications also hold between skeletal and strongly skeletal maps. Various additional information about the introduced classes of maps see e.g. in \cite{bkm,burke,mr}. Consider an example of an almost open bijection which is not quasi-open.

\begin{example}\label{aowo}
Let $X=\Q\oplus \left(\R\backslash \Q\right)$ be the disjoint union of rationals and irrationals. Then the natural embedding of $X$ into $\R$ is an almost open bijection, which is not quasi-open.
\qed\end{example}

We will now present some equivalents to the definitions above.

\begin{proposition}\label{ao}The following conditions are equivalent:
\item[(i)] $\varphi$ is quasi-open;
\item[(ii)] Every open nonempty $U\subset X$ contains an open nonempty subset $W$ such that $\varphi\left(W\right)$ is open;
\item[(ii')] Every open nonempty $U\subset X$ contains an open subset $W$ such that $\overline{W}=U$ and $\varphi\left(W\right)$ is open;
\item[(iii)] Preimage under $\varphi$ of every dense set is dense.
\end{proposition}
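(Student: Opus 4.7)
My plan is to prove the cycle (ii)$\Rightarrow$(i)$\Rightarrow$(iii)$\Rightarrow$(i) and then strengthen (i)$\Rightarrow$(ii) to include density.

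The implication (ii)$\Rightarrow$(i) is immediate: given open nonempty $U \subset X$, take the promised $W \subset U$; then $\varphi(W)$ is an open nonempty subset of $\varphi(U)$, so $\Int \varphi(U) \neq \varnothing$.

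For (i)$\Rightarrow$(iii), let $D \subset Y$ be dense and let $U \subset X$ be open nonempty. By weak openness, $V = \Int \varphi(U)$ is nonempty open, so $D \cap V \neq \varnothing$; any $y \in D \cap V$ has a preimage in $U$, which lies in $U \cap \varphi^{-1}(D)$. Conversely, for (iii)$\Rightarrow$(i), assume $U \subset X$ is open nonempty with $\Int \varphi(U) = \varnothing$. Then $D := Y \setminus \varphi(U)$ is dense in $Y$ (since $\Int A = Y \setminus \overline{Y \setminus A}$), yet $\varphi^{-1}(D) \subset X \setminus U$ misses the nonempty open set $U$, contradicting (iii).

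The main content is (i)$\Rightarrow$(ii) with the parenthetical density requirement. The idea is to use a maximality argument: fix open nonempty $U \subset X$ and consider the family $\mathcal{W}$ of all open $W' \subset U$ for which $\varphi(W')$ is open in $Y$. Let $W = \bigcup \mathcal{W}$. Then $W$ is open in $U$ and $\varphi(W) = \bigcup_{W' \in \mathcal{W}} \varphi(W')$ is a union of open sets, hence open. To see $W$ is dense in $U$, let $U' \subset U$ be any open nonempty subset; applying (i) to $U'$ (which is open in $X$), we obtain $V' := \Int \varphi(U') \neq \varnothing$, and then $W_0 := U' \cap \varphi^{-1}(V')$ is open, nonempty (since every $y \in V' \subset \varphi(U')$ has a preimage in $U'$), contained in $U' \subset U$, and $\varphi(W_0) = V'$ is open; so $W_0 \in \mathcal{W}$ and therefore $U' \cap W \supset W_0 \neq \varnothing$.

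The mildly delicate step is verifying $\varphi(W_0) = V'$ in the last argument, which rests on the observation that $V' \subset \varphi(U')$ forces every point of $V'$ to have a preimage inside $U'$ (not merely in $X$). Beyond this, the three implications are essentially formal, so I anticipate no serious obstacle.
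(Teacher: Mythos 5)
Your argument is correct, and it follows the same circle of ideas as the paper while organizing them a bit differently. The paper proves the cycle (i)$\Rightarrow$(ii, dense $W$)$\Rightarrow$(iii)$\Rightarrow$(i): its (iii)$\Rightarrow$(i) is the same contradiction you use, but it reaches (iii) \emph{through} (ii) (if $D$ were not dense, the open set $X\backslash\overline{\varphi^{-1}(D)}$ would contain a good set $W$ whose open image misses $D$), whereas you prove (i)$\Rightarrow$(iii) directly by picking a point of $D\cap\Int\varphi(U)$ and pulling it back into $U$ --- slightly shorter and equally valid. The real difference is in (i)$\Rightarrow$(ii): the paper simply sets $W=\varphi^{-1}\left(\Int\varphi\left(U\right)\right)$ and proves $U\subset\overline{W}$ by applying weak openness to $U\backslash\overline{W}$; this $W$ need not be contained in $U$, so as written it does not literally produce a subset of $U$ (one has to intersect with $U$ and then re-check openness of the image). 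Your construction repairs exactly this point: the witness $W_{0}=U'\cap\varphi^{-1}\left(\Int\varphi\left(U'\right)\right)$, whose image you verify equals $\Int\varphi\left(U'\right)$ because $\Int\varphi\left(U'\right)\subset\varphi\left(U'\right)$, stays inside $U'$, and taking the union of all good open subsets of $U$ gives density; applying weak openness to an arbitrary open $U'\subset U$ plays the same role as the paper's application to $U\backslash\overline{W}$. So your route is marginally longer (four implications instead of three) but more careful about the containment $W\subset U$, and otherwise buys the same result.
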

\begin{proof}
(i)$\Rightarrow$(ii'): Let $U\subset X$ be open and nonempty and let $W=\varphi^{-1}\left(\Int\varphi\left(U\right)\right)$. Clearly, $\varphi\left(W\right)=\Int\varphi\left(U\right)$ is open, and so it is left to show that $W$ is dense in $U$. Indeed, if $V=U\backslash\overline{W}\ne\varnothing$, then $\varnothing\ne\Int\varphi\left(V\right)\subset \Int\varphi\left(U\right)$, but $V\cap\varphi^{-1}\left(\Int\varphi\left(U\right)\right)=\varnothing$. Contradiction.\medskip

(ii')$\Rightarrow$(ii) is trivial. (ii)$\Rightarrow$(iii): Let $A\subset Y$ be dense and let $U= X\backslash\overline{\varphi^{-1}\left(A\right)}$. If $U$ is nonempty, from (ii) there is a nonempty open $W\subset U$ such that $V=\varphi\left(W\right)$ is open. Then, $V\cap A=\varnothing$ which contradicts density of $A$.\medskip

(iii)$\Rightarrow$(i): If $\Int\varphi\left(U\right)=\varnothing$, for some open $U\subset X$, then $Y\backslash \varphi\left(U\right)$ is dense, and so $X\backslash U\supset \varphi^{-1}\left(Y\backslash \varphi\left(U\right)\right)$ is dense in $X$. Hence, $U=\varnothing$.
\end{proof}

\begin{proposition}\label{wo}The following conditions are equivalent:
\item[(i)] $\varphi$ is almost open;
\item[(ii)]  $\Int\varphi^{-1}\left(\overline{V}\right)\subset \overline{\varphi^{-1}\left(V\right)}$, for every open $V\subset Y$;
\item[(iii)] Preimage under $\varphi$ of every open dense set is dense;
\item[(iv)] Preimage under $\varphi$ of every nowhere dense set is nowhere dense;
\item[(iv')] Preimage under $\varphi$ of every closed nowhere dense set is nowhere dense.
\end{proposition}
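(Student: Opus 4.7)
The plan is to treat condition~(i) as the hub and prove the cycle (i)$\Rightarrow$(iii)$\Rightarrow$(iv)$\Rightarrow$(i), then (iv)$\Leftrightarrow$(ii), and finally (i)$\Leftrightarrow$(v) and (i)$\Leftrightarrow$(vi) as separate equivalences. Each parenthetical weakening will be verified as it arises, by checking that the stronger and weaker versions bracket the corresponding characterization of almost openness.

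For (i)$\Rightarrow$(iii): if $A\subset Y$ is open and dense but $\varphi^{-1}(A)$ is not dense, then $U=X\setminus\overline{\varphi^{-1}(A)}$ is open and nonempty, and $\Int\overline{\varphi(U)}$ is then a nonempty open set disjoint from the dense set $A$. For (iii)$\Rightarrow$(iv): a closed nowhere dense $A$ has dense open complement, whose preimage is the complement of the closed set $\varphi^{-1}(A)$, forcing the latter to be nowhere dense; the general case reduces to this one since $\varphi^{-1}(A)\subset \varphi^{-1}(\overline{A})$. For (iv)$\Rightarrow$(i): an empty interior of $\overline{\varphi(U)}$ would make this set closed and nowhere dense, while $\varphi^{-1}(\overline{\varphi(U)})\supset U$ contradicts nowhere density of the preimage. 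For (iv)$\Rightarrow$(ii): the boundary $\partial V=\overline{V}\setminus V$ of any open $V$ is closed and nowhere dense (an interior point of $\partial V$ would give an open set inside $\overline{V}$ disjoint from $V$), so (iv) applied to $\partial V$ shows $\varphi^{-1}(\overline{V})\setminus\varphi^{-1}(V)$ has empty interior, which unpacks to~(ii). The converse (ii)$\Rightarrow$(iii) comes from applying (ii) to a dense open $A$, where $\overline{A}=Y$ forces $\Int\varphi^{-1}(Y)=X\subset\overline{\varphi^{-1}(A)}$.

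The implication (i)$\Rightarrow$(v) is where I expect the main friction. Given open $U\subset X$, set $A=\overline{\varphi(U)}$ and suppose for contradiction that $y\in A\setminus\overline{\Int A}$, so there is an open $V\ni y$ with $V\cap\Int A=\varnothing$. Since $y\in\overline{\varphi(U)}\cap V$, the set $W=U\cap\varphi^{-1}(V)$ is open and nonempty; by (i), $\Int\overline{\varphi(W)}$ is nonempty. This open set lies in $A$ (because $\varphi(W)\subset\varphi(U)$) and in $\overline{V}$ (because $\varphi(W)\subset V$), so any point of it lying in $V$ would produce a point of $V\cap\Int A$. Hence $\Int\overline{\varphi(W)}\subset\overline{V}\setminus V=\partial V$, contradicting nowhere density of~$\partial V$. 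The converse (v)$\Rightarrow$(i) is immediate, since canonical closedness of a nonempty set forces its interior to be nonempty. The parenthetical weakening to canonically closed $U$ reduces to the open case because $\overline{\varphi(U)}=\overline{\varphi(\Int U)}$ when $U=\overline{\Int U}$, using $\varphi(\overline{\Int U})\subset\overline{\varphi(\Int U)}$ by continuity.

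Finally, for (i)$\Leftrightarrow$(vi), given a dense $D\subset X$, every open nonempty $U\subset D$ is of the form $D\cap U'$ for some open nonempty $U'\subset X$; density of $D$ makes $U$ dense in $U'$, so continuity gives $\varphi(U')\subset\overline{\varphi(U)}$ and hence $\overline{\varphi(U')}=\overline{\varphi(U)}$ in $Y$. Thus (i) for $U'$ transfers to almost openness of $\varphi|_D$ on $U$, proving the ``every'' version; conversely, the ``some'' version yields (i) by the same identification and monotonicity of closures and interiors. Taking $D=X$ shows all three formulations coincide with~(i). The main obstacle, as flagged, is the step (i)$\Rightarrow$(v), which demands careful bookkeeping about which closures and interiors live in $X$ versus $Y$ and leans on the nowhere density of the boundary of an open set.
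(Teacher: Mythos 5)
Your argument is correct, but it is organized differently from the paper's. The paper runs the single chain (i)$\Rightarrow$(ii)$\Rightarrow$(iii)$\Rightarrow$(iv)$\Rightarrow$(v)$\Rightarrow$(i): there (ii) is extracted from (i) by applying almost openness to the open set $\Int\varphi^{-1}\left(\overline{V}\right)\backslash\overline{\varphi^{-1}\left(V\right)}$, whose image lies in the nowhere dense set $\overline{V}\backslash V$, and (v) is deduced from (iv) via the observation that a closed set $B$ is canonically closed if and only if $B\cap V$ is empty or somewhere dense for every open $V$. You instead close the shorter cycle (i)$\Rightarrow$(iii)$\Rightarrow$(iv)$\Rightarrow$(i), attach (ii) to (iv) through the nowhere density of $\overline{V}\backslash V$, and prove (i)$\Rightarrow$(v) directly by trapping the nonempty open set $\Int\overline{\varphi\left(U\cap\varphi^{-1}\left(V\right)\right)}$ inside $\overline{V}\backslash V$ --- a genuinely different and rather self-contained route to (v), though both proofs ultimately rest on the same elementary facts (boundaries of open sets are nowhere dense, every nowhere dense set sits inside a closed one, and $D\cap U'$ is dense in $U'$ for dense $D$); your handling of the parentheticals in (iv) and (vi) matches what is needed. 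The one loose end is the parenthetical in (v): you only show that the open-$U$ version implies the canonically closed version, whereas the stated equivalence also needs the reverse; this is immediate from the identity you already invoke, since for open $U$ the set $\overline{U}$ is canonically closed and $\overline{\varphi\left(\overline{U}\right)}=\overline{\varphi\left(U\right)}$, so the canonically closed version applied to $\overline{U}$ returns the open version (and hence (i)). With that one line added, the proposal is complete.
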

\begin{proof}
First, (ii)$\Rightarrow$(iii) and (iv)$\Rightarrow$(i) are trivial; (iv)$\Leftrightarrow$(iv') is true since every nowhere dense set is contained in a closed nowhere dense set.\medskip

(i)$\Rightarrow$(ii): Let $V\subset Y$ be open and let $U=\Int\varphi^{-1}\left(\overline{V}\right)\backslash \overline{\varphi^{-1}\left(V\right)}$, which is open. Then $U\subset \varphi^{-1}\left(\overline{V}\right)\backslash \varphi^{-1}\left(V\right)$, and so $\varphi\left(U\right)\subset\overline{V}\backslash V$ is nowhere dense. Hence, $U=\varnothing$, and so $\Int\varphi^{-1}\left(\overline{V}\right)\subset \overline{\varphi^{-1}\left(V\right)}$.\medskip

(iii)$\Rightarrow$(iv): Let $A\subset Y$ be nowhere dense. Then $V=Y\backslash\overline{A}$ is an open dense set, and so $\varphi^{-1}\left(V\right)$ is open dense and disjoint from $\varphi^{-1}\left(A\right)$. Hence, the latter is nowhere dense.
\end{proof}

Let us also remark that $\varphi$ is strongly skeletal if and only if for every open nonempty $U\subset X$ there is an open nonempty $V\subset Y$ such that $\varphi^{-1}\left(V\right)\subset \varphi^{-1}\left(\varphi\left(U\right)\right)$, since the latter formula simply means that $V\cap\varphi\left(X\right)\subset \varphi\left(U\right)$. Similarly, $\varphi$ is skeletal if and only if for every open nonempty $U\subset X$ there is an open nonempty $V\subset Y$ such that $\varphi^{-1}\left(V\right)\subset \varphi^{-1}\left(\overline{\varphi\left(U\right)}\right)$. The first observation motivates the following definition.

We will say that $A\subset X$ is $\varphi$-\emph{saturated} if $A=\varphi^{-1}\left(\varphi\left(A\right)\right)$. It is easy to see that $A$ is $\varphi$-saturated if and only if $A=\varphi^{-1}\left(B\right)$, for some $B\subset Y$, from where the union of any collection of $\varphi$-saturated sets is $\varphi$-saturated, as well as the difference of two $\varphi$-saturated sets. It now follows that if $\varphi$ is strongly skeletal, then for every open nonempty $U \subset X$, there is an open nonempty $\varphi$-saturated $W\subset \varphi^{-1}\left(\varphi\left(U\right)\right)$.\medskip

Let us move on to the maps that do not make ``small'' sets ``large''. We will call $\varphi$:
\begin{itemize}
\item \emph{weakly injective} if every open nonempty $U\subset X$ contains an open nonempty $\varphi$-saturated subset;
\item \emph{almost injective} if the set of $x\in X$ such that $\left\{x\right\}$ is $\varphi$-saturated is dense in $X$;
\item \emph{irreducible} if there no closed $A\subsetneq X$ is such that $\varphi\left(X\right)\subset\overline{\varphi\left(A\right)}$.
\end{itemize}

Obviously, every injective map is weakly injective and almost injective, but Example \ref{aowo} shows that an almost open bijection is not necessarily irreducible. An example of a bijection from a locally compact space, which is not irreducible is the natural projection of $\left\{0\right\}\oplus \left(0,1\right]$ into $\left[0,1\right]$. Let us discuss weak injectivity in slightly more details.

\begin{proposition}\label{wi}If $\varphi$ is weakly injective, then:
\item[(i)] Every open nonempty $U\subset X$ contains an open $\varphi$-saturated subset $W$ which is dense in $U$.
\item[(ii)] $\varphi^{-1}\left(\varphi\left(A\right)\right)\backslash \overline{A}$ is nowhere dense, for every $A\subset X$.
\item[(iii)] If $A\subset X$ is nowhere dense, then $\Int_{\varphi\left(X\right)}\varphi\left(A\right)=\varnothing$.
\end{proposition}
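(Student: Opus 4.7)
\medskip

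My plan is to prove the three parts in order, using (i) as the main tool for (ii), and then deducing (iii) as a quick consequence of (ii).

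For (i), I would use the maximality principle based on the observation, recorded in the paper, that arbitrary unions of $\varphi$-saturated sets are $\varphi$-saturated. Concretely, given an open nonempty $U\subset X$, let $W$ be the union of all open $\varphi$-saturated subsets of $U$. Then $W$ is itself open and $\varphi$-saturated. If $W$ failed to be dense in $U$, then $V = U\setminus\overline{W}$ would be open and nonempty, and weak injectivity applied to $V$ would produce an open nonempty $\varphi$-saturated subset of $V$. Adjoining this to $W$ contradicts the maximality of the latter, so $W$ is dense in $U$.

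For (ii), set $B=\varphi^{-1}(\varphi(A))\setminus\overline{A}$, and suppose for contradiction that $U_{0}:=\Int\overline{B}$ is nonempty. Since $B\subset X\setminus\overline{A}$ and the latter is open, the set $U:=U_{0}\cap(X\setminus\overline{A})$ is still open and nonempty (as $B$ is dense in $U_{0}$ and entirely avoids $\overline{A}$, so $B\cap U_{0}\subset U$). Applying (i) to $U$, I get an open nonempty $\varphi$-saturated $W\subset U$, and since $B$ is dense in $U_{0}$ it meets $W$. Any $x\in W\cap B$ satisfies $\varphi(x)\in\varphi(A)$, so there is $a\in A$ with $\varphi(a)=\varphi(x)$; $\varphi$-saturation of $W$ forces $a\in W\subset X\setminus\overline{A}$, contradicting $a\in A$. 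This is the step I expect to be the main obstacle, in the sense that it is where one actually uses the strength of (i) together with the interplay between $B$, $\overline{A}$, and $\varphi$-saturation.

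For (iii), I would argue by contradiction: if $\Int_{\varphi(X)}\varphi(A)\neq\varnothing$ for a nowhere dense $A$, there is an open $V\subset Y$ with $\varnothing\neq V\cap\varphi(X)\subset\varphi(A)$, whence $\varphi^{-1}(V)$ is an open nonempty subset of $\varphi^{-1}(\varphi(A))=\overline{A}\cup(\varphi^{-1}(\varphi(A))\setminus\overline{A})$. The first piece is nowhere dense because $A$ is, and the second is nowhere dense by (ii); since a finite union of nowhere dense sets is nowhere dense, $\varphi^{-1}(V)$ sits inside a nowhere dense set, which is impossible for a nonempty open set.
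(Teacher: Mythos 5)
Your proposal is correct and follows essentially the same route as the paper: part (i) via the maximal union of open $\varphi$-saturated subsets, part (ii) by using (i) to produce a saturated open set inside $X\setminus\overline{A}$ that cannot meet $\varphi^{-1}\left(\varphi\left(A\right)\right)$ (you package this as a contradiction on $\Int\overline{B}$, the paper argues directly, but the mechanism is identical), and part (iii) by combining (ii) with nowhere density of $\overline{A}$. The only nitpick is that in (iii) the displayed equality $\varphi^{-1}\left(\varphi\left(A\right)\right)=\overline{A}\cup\left(\varphi^{-1}\left(\varphi\left(A\right)\right)\setminus\overline{A}\right)$ should be an inclusion $\subset$, which is all you use anyway.
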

\begin{proof}
(i): Let $W$ be the union of all open $\varphi$-saturated subsets of $U$. Clearly, $W$ is open and $\varphi$-saturated, and so we only need to show that $U\subset\overline{W}$. If this is not true, then $V=U\backslash\overline{W}$ is open and nonempty, and so it contains an nonempty open $\varphi$-saturated subset. That subset is therefore an open $\varphi$-saturated subset of $U$, and so it is contained in $W$. Contradiction.\medskip

(ii): According to (i), for every $A\subset X$ there is an open $\varphi$-saturated $W\subset X\backslash\overline{A}$, which is dense in $X\backslash \overline{A}$. Since $W$ is $\varphi$-saturated, the same is true about $X\backslash W$. The latter set contains $A$, and so it also contains $\varphi^{-1}\left(\varphi\left(A\right)\right)$. Hence, $\varphi^{-1}\left(\varphi\left(A\right)\right)\backslash \overline{A}$ is contained in $\left(X\backslash\overline{A}\right)\backslash W$, which is nowhere dense.\medskip

(iii): If $A\subset X$ is nowhere dense, but such that $V=\Int_{\varphi\left(X\right)}\varphi\left(A\right)\ne\varnothing$, then $\varphi^{-1}\left(V\right)\not\subset \overline{A}$, since the latter is nowhere dense, and so $\varphi^{-1}\left(\varphi\left(A\right)\right)\backslash \overline{A}$ is not nowhere dense. This contradicts (ii).
\end{proof}

Let us also remark that if $\varphi$ is irreducible, then it maps nowhere dense sets into nowhere dense subsets of $\varphi\left(X\right)$. Indeed, if $A\subset X$ is closed nowhere dense and such that $V=\Int_{\varphi\left(X\right)}\overline{\varphi\left(A\right)}\ne\varnothing$, then $B=\left(X\backslash \varphi^{-1}\left(V\right)\right)\cup A$ is a closed proper subset of $X$ whose image is dense in $\varphi\left(X\right)$.

Note that in the definition of an irreducible map one often also assumes that $\varphi$ is closed or surjective (see \cite[Appendix 4]{alek}, \cite{bot}, \cite[III.1]{mr}, \cite[6.5]{pw} and \cite[VIII.10]{whyburn}), but we will not add those conditions. It is easy to see that any topological embedding (i.e. a homeomorphism onto its image) is irreducible.

\begin{proposition}\label{irr}The following conditions are equivalent:
\item[(i)] $\varphi$ is irreducible;
\item[(ii)] For every open nonempty $U\subset X$ there is an open $V\subset Y$ such that $\varnothing\ne\varphi^{-1}\left(V\right)\subset U$;
\item[(ii')] For every open nonempty $U\subset X$ there is an open $V\subset Y$ such that $\varphi^{-1}\left(V\right)$ is a dense subset of $U$;
\item[(iii)] $\varphi$ is strongly skeletal and weakly injective;
\item[(iv)] $\varphi$ is strongly skeletal and $\varphi^{-1}\left(\varphi\left(A\right)\right)$ is not dense, for every closed $A\subsetneq X$.
\end{proposition}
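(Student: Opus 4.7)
The plan is to close the cycle (i) $\Rightarrow$ (ii) $\Rightarrow$ (iii) $\Rightarrow$ (iv) $\Rightarrow$ (ii), establishing on the way the parenthetical strengthening of (ii) (density of $\varphi^{-1}(V)$ in $U$) from (i); that stronger version trivially implies the weak one, which is all that is needed to close the loop.

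For (i) $\Rightarrow$ (ii), given an open nonempty $U \subset X$ I set $A := X \setminus U$ and $V := Y \setminus \overline{\varphi(A)}$. Irreducibility says $\varphi(X) \not\subset \overline{\varphi(A)}$, so $V \cap \varphi(X) \neq \varnothing$ and $\varphi^{-1}(V)$ is a nonempty open set which, being disjoint from $A$, is contained in $U$. For the density upgrade, if some nonempty open $W \subset U$ were disjoint from $\varphi^{-1}(V)$, then $\varphi(W) \subset \overline{\varphi(A)}$; taking $A' := X \setminus W$, a proper closed subset containing $A$, I would get $\varphi(X) = \varphi(A') \cup \varphi(W) \subset \overline{\varphi(A')}$, contradicting (i). The converse (ii) $\Rightarrow$ (i) is a one-liner: if a closed $A \subsetneq X$ satisfies $\varphi(X) \subset \overline{\varphi(A)}$, applying (ii) to $U := X \setminus A$ produces an open $V$ with $\varphi^{-1}(V) \subset U$ and $V$ meeting $\overline{\varphi(A)}$, so $V$ meets $\varphi(A)$, forcing some $a \in A \cap \varphi^{-1}(V) \subset A \cap U = \varnothing$.

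For (ii) $\Rightarrow$ (iii), the set $W := \varphi^{-1}(V) \subset U$ given by (ii) is open, nonempty, and $\varphi$-saturated (since $\varphi^{-1}(\varphi(\varphi^{-1}(V))) = \varphi^{-1}(V)$), yielding weak injectivity; and $\varphi(W) = V \cap \varphi(X)$ is a nonempty open subset of $\varphi(X)$ inside $\varphi(U)$, yielding strong skeletality. For (iii) $\Rightarrow$ (iv), given closed $A \subsetneq X$ I apply weak injectivity to $U := X \setminus A$ to obtain an open nonempty $\varphi$-saturated $W \subset U$; since $W \cap A = \varnothing$ and $W$ is $\varphi$-saturated, also $W \cap \varphi^{-1}(\varphi(A)) = \varnothing$, so $\varphi^{-1}(\varphi(A))$ is not dense.

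The main — really, only — subtlety is (iv) $\Rightarrow$ (ii). Given open nonempty $U$ with $A := X \setminus U$, (iv) furnishes an open nonempty $W$ with $W \cap \varphi^{-1}(\varphi(A)) = \varnothing$; since $A \subset \varphi^{-1}(\varphi(A))$, automatically $W \subset U$. Strong skeletality, applied to $W$, gives an open $V \subset Y$ with $\varnothing \neq V \cap \varphi(X) \subset \varphi(W)$, so $\varphi^{-1}(V) \neq \varnothing$. The hard part is showing $\varphi^{-1}(V) \subset U$ even though $W$ itself need not be $\varphi$-saturated: for $x \in \varphi^{-1}(V)$ I have $\varphi(x) \in \varphi(W)$, so $\varphi(x) = \varphi(w)$ for some $w \in W$; the choice of $W$ gives $w \notin \varphi^{-1}(\varphi(A))$, hence $\varphi(x) = \varphi(w) \notin \varphi(A)$, so $x \notin A$ and $x \in U$. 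The conceptual point is that although $W$ may fail to be $\varphi$-saturated, it lies inside the $\varphi$-saturated complement of $\varphi^{-1}(\varphi(A))$, which is enough to transmit the containment in $U$ all the way to $\varphi^{-1}(V)$.
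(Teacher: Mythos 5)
Your proof is correct. It runs the same chain (i)$\Rightarrow$(ii)$\Rightarrow$(iii)$\Rightarrow$(iv) as the paper, but closes the loop differently: the paper returns via (iv)$\Rightarrow$(i), quoting Proposition \ref{ao} (a strongly skeletal map is weakly open onto its image, hence pulls dense sets back to dense sets, so a closed $A\subsetneq X$ with $\varphi\left(A\right)$ dense in $\varphi\left(X\right)$ would force $\varphi^{-1}\left(\varphi\left(A\right)\right)$ to be dense), whereas you prove (iv)$\Rightarrow$(ii) by hand — choosing $W$ inside the complement of $\overline{\varphi^{-1}\left(\varphi\left(A\right)\right)}$ and noting that, even though $W$ need not be $\varphi$-saturated, it sits inside the saturated set $X\backslash\varphi^{-1}\left(\varphi\left(A\right)\right)$, which is exactly what pushes the containment through to $\varphi^{-1}\left(V\right)$ — and then add the short (ii)$\Rightarrow$(i). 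Likewise, for the parenthetical density claim in (ii) the paper defers to the exhaustion argument of Proposition \ref{wi}(i), while you verify density directly for the canonical choice $V=Y\backslash\overline{\varphi\left(A\right)}$ by passing to the larger closed set $A'=X\backslash W$. The trade-off is the expected one: the paper's version is shorter because it reuses Propositions \ref{ao} and \ref{wi}, while yours is self-contained, makes the saturation subtlety in (iv)$\Rightarrow$(ii) explicit, and gives (i)$\Leftrightarrow$(ii) directly in both directions.
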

\begin{proof}
(i)$\Rightarrow$(ii): Assume that $U\subset X$ is open nonempty and such that for every open $V\subset Y$ either $\varphi^{-1}\left(V\right)=\varnothing$, or $\varphi^{-1}\left(V\right)\not\subset U$. Let us show that $\varphi\left(X\backslash U\right)$ is dense in $\varphi\left(X\right)$. Indeed, if $V$ is an open subset of $Y$ such that $V\cap \varphi\left(X\right)\ne\varnothing$, then $\varphi^{-1}\left(V\right)\ne\varnothing$ and so $\varphi^{-1}\left(V\right)\backslash U\ne \varnothing$, from where $\varphi\left(X\backslash U\right)\cap V\ne\varnothing$.\medskip

(ii)$\Rightarrow$(ii') is done by a similar argument to the part (i) of Proposition \ref{wi}. (ii')$\Rightarrow$(ii)$\Rightarrow$(iii) is easy to see, and (iii)$\Rightarrow$(iv) follows from part (ii) of Proposition \ref{wi}.\medskip

(iv)$\Rightarrow$(i): Assume that $\varphi$ is not irreducible. Then, there is a closed $A\subsetneq X$ such that $\varphi\left(A\right)$ is dense in $\varphi\left(X\right)$. Since $\varphi$ is strongly skeletal, it is quasi-open onto $\varphi\left(X\right)$, and so from Proposition \ref{ao}, the pre-image $\varphi^{-1}\left(\varphi\left(A\right)\right)$ has to be dense. Contradiction.\end{proof}

The following result shows that irreducibility, weak injectivity and almost injectivity coincide if $X$ is metrizable compact.

\begin{proposition}\label{mirr}
\item[(i)] If $\varphi$ is closed, then $\varphi$ is irreducible if and only if $\varphi^{-1}\left(\varphi\left(A\right)\right)\ne X$, for every closed $A\subsetneq X$.
\item[(ii)] If $\varphi$ is closed and almost injective, then it is irreducible.
\item[(iii)] If $X$ is a metrizable Baire space (e.g. completely metrizable) and $\varphi$ is weakly injective, then it is almost injective.
\end{proposition}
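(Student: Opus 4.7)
The plan is to handle (i) and (ii) directly from the definitions, and to attack (iii) by a Baire category argument combined with the availability of arbitrarily small $\varphi$-saturated open sets provided by weak injectivity.

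For (i), I would just observe that since $\varphi$ is closed, $\varphi(A)$ is closed in $Y$ whenever $A$ is closed in $X$, so $\overline{\varphi(A)}=\varphi(A)$. Thus the condition ``$\varphi(X)\subset\overline{\varphi(A)}$'' becomes ``$\varphi(X)\subset \varphi(A)$'', which is the same as ``$\varphi^{-1}(\varphi(A))=X$''; both directions follow immediately. For (ii), I would take a closed $A\subsetneq X$ and use almost injectivity to pick a point $x\in X\backslash A$ with $\varphi^{-1}(\varphi(x))=\{x\}$. If $\varphi(a)=\varphi(x)$ for some $a\in A$, then $a\in\{x\}$, contradicting $x\notin A$; hence $\varphi^{-1}(\varphi(A))\ne X$, and (i) gives irreducibility.

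The substantive piece is (iii). I would argue by contradiction: assume $\varphi$ is weakly injective but the set $A$ of almost injective points is not dense. Choose an open nonempty $U$ with $U\cap A=\varnothing$ and, by weak injectivity, shrink to an open nonempty $\varphi$-saturated $V\subset U$. For each $x\in V$, saturation gives $\varphi^{-1}(\varphi(x))\subset V$, and $x\notin A$ provides $y\in V\backslash\{x\}$ with $\varphi(y)=\varphi(x)$. Fixing a compatible metric $d$ on $X$, for each $n\in\N$ I would set
$$B_n=\bigl\{x\in V\,:\,\exists\, y\in V,\ d(x,y)\ge 1/n,\ \varphi(x)=\varphi(y)\bigr\},\qquad K_n=\overline{B_n}\cap V,$$
so that $V=\bigcup_n B_n=\bigcup_n K_n$ and each $K_n$ is closed in $V$. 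Since $V$ is open in the Baire space $X$ it is itself Baire, so some $K_n$ has nonempty interior $U'$ in $V$. Picking $x_0\in U'$ and setting $V'=U'\cap B(x_0,1/(3n))$ gives an open subset of $X$ of diameter at most $2/(3n)<1/n$. Weak injectivity supplies an open nonempty $\varphi$-saturated $V''\subset V'$. Being a nonempty open subset of $K_n$, $V''$ meets $B_n$; picking $x\in V''\cap B_n$ and a witness $y\in V$ with $d(x,y)\ge 1/n$ and $\varphi(y)=\varphi(x)$, the $\varphi$-saturation of $V''$ forces $y\in V''$, so $d(x,y)\le\mathrm{diam}(V'')<1/n$, contradicting $d(x,y)\ge 1/n$.

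The main obstacle, as expected, lies in this last step: the ``non-singleton-fiber'' set $\bigcup_n B_n$ is only $F_\sigma$, not closed, so Baire must be applied to the closures $K_n$ rather than the $B_n$ themselves; one then needs the additional room, provided again by weak injectivity, to shrink to a $\varphi$-saturated open set of diameter strictly less than $1/n$ in order to convert the mere density of $B_n$ in $K_n$ into a genuine contradiction.
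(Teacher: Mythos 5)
Your proposal is correct. For part (i) you take a genuinely more direct route than the paper: you simply note that closedness of $\varphi$ gives $\overline{\varphi\left(A\right)}=\varphi\left(A\right)$ for closed $A$, so the definition of irreducibility is literally the stated fiber condition; the paper instead proves that a closed map with this property is almost open onto its image and then invokes Proposition \ref{irr}, a detour your observation renders unnecessary. Your part (ii) routes through (i) via a singleton-fiber point outside $A$, while the paper argues directly that the dense set of singleton-fiber points must be contained in $A$ — the same idea, differently packaged. For part (iii) the substance coincides with the paper's: weak injectivity supplies arbitrarily small $\varphi$-saturated open sets, which force the fibers of their points to have small diameter, and Baire category converts this into density of the singleton-fiber points. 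The paper does this globally, showing that the set $A_{n}$ of points whose fiber has diameter at least $\frac{1}{n}$ is nowhere dense, so that $X\backslash\bigcup\limits_{n\in\N}A_{n}$ is dense; you instead argue by contradiction inside a saturated open set $V$ avoiding all singleton-fiber points, stratify $V$ by the sets $B_{n}$, and apply Baire to their closures in $V$ (using, correctly, that an open subspace of a Baire space is Baire). Every step of your version is sound — in particular the saturation of $V''$ does force the witness $y$ into $V''$, producing the diameter contradiction — it is just slightly longer than the paper's formulation, which avoids passing to closures by quantifying over fiber diameters from the outset.
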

\begin{proof}
(i): From Proposition \ref{irr} we only have prove sufficiency; moreover, it is enough to show that $\varphi$ is skeletal. Replacing $Y$ with $\varphi\left(X\right)$ we may assume that $\varphi$ is a closed surjection, and so we need to prove that $\varphi$ is almost open. Assume that there is an open nonempty $U\subset X$ such that $\varphi\left(U\right)$ is nowhere dense and let $A=X\backslash U$. Since $\varphi$ is closed, $\overline{\varphi\left(A\right)}=\varphi\left(A\right)$, and since $\varphi\left(U\right)$ is nowhere dense, $\varphi\left(U\right)\subset \overline{Y\backslash \varphi\left(U\right)}\subset \overline{\varphi\left(A\right)}=\varphi\left(A\right)$. Hence, $U\subset \varphi^{-1}\left(\varphi\left(A\right)\right)$, from where $\varphi^{-1}\left(\varphi\left(A\right)\right)=X$. Contradiction.\medskip

(ii): Let $Z$ be the set of all $x\in X$ for which $\varphi^{-1}\left(\varphi\left(x\right)\right)=\left\{x\right\}$. From our assumption, $Z$ is dense in $X$. Assume that $A\subset X$ is closed and such that $\varphi\left(X\right)\subset\overline{\varphi\left(A\right)}$. Then, as $\varphi$ is a closed map, $\varphi\left(X\right)\subset\varphi\left(A\right)$. Since for $x\in Z$ the condition $\varphi\left(x\right)\in \varphi\left(A\right)$ implies $x\in A$, it follows that $Z\subset A$, and so $A=X$.\medskip

(iii): Fix a metric on $X$ and let $A_{n}$ be the set  of all $x\in X$ such that the diameter of $\varphi^{-1}\left(\varphi\left(x\right)\right)$ is at least $\frac{1}{n}$. We need to prove that $X\backslash \bigcup \limits_{n\in\N}A_{n}$ is dense. To that end, since $X$ is a Baire space, it is enough to show that $A_{n}$ is nowhere dense, for each $n\in\N$. Assume that $\Int\overline{A_{n}}\ne\varnothing$, for some $n\in\N$, and take an open $U\subset \Int\overline{A_{n}}$ of diameter less than $\frac{1}{n}$. Since $\varphi$ is weakly injective, there is an open nonempty $\varphi$-saturated $W\subset U$. For every $x\in W$ we have that $\varphi^{-1}\left(\varphi\left(x\right)\right)\subset W\subset U$, and so the diameter of $\varphi^{-1}\left(\varphi\left(x\right)\right)$ is less than $\frac{1}{n}$. Hence $W\cap A_{n}=\varnothing$, which contradicts $W\subset U\subset \Int\overline{A_{n}}$.
\end{proof}

\section{Order continuity of composition operators}\label{c}

In this section $X$ and $Y$ are Tychonoff spaces and $\varphi:X\to Y$ is continuous. We will study how some of the properties of $\varphi$ discussed in the previous section are reflected on the properties of the corresponding composition operator. Let $E\subset \Co\left(Y\right)$ and $F\subset \Co\left(X\right)$ be sublattices such that $C_{\varphi}E\subset F$. It is easy to verify that if $A\subset X$ and $B\subset Y$, then $C_{\varphi}E_{B}\subset F_{\varphi^{-1}\left(B\right)}$ and $C_{\varphi}^{-1}F_{A}=E_{\varphi\left(A\right)}$. In particular, $\Ker C_{\varphi}=C_{\varphi}^{-1}F_{X}=E_{\varphi\left(X\right)}$, and so if $E$ is a weakly Urysohn sublattice, then $C_{\varphi}$ is an injection if and only if $\overline{\varphi\left(X\right)}=Y$. The following is a generalization of \cite[Theorem 3.4 and Proposition 3.6]{kv1}.

\begin{theorem}\label{com}
If $E\subset\Co\left(Y\right)$ is an order dense Urysohn sublattice, then:
\item[(i)] $C_{\varphi}E$ is weakly Urysohn if and only if $C_{\varphi}E$ is order dense and if and only if $\varphi$ is irreducible.
\item[(ii)] $C_{\varphi}E$ is an Urysohn sublattice if and only if $\varphi$ is a topological embedding.
\item[(iii)] $C_{\varphi}:E\to \Co\left(X\right)$ is order-continuous if and only if $\varphi$ is almost open and if and only if $C_{\varphi}E_{A}^{dd}\subset\left(C_{\varphi}E_{A}\right)^{dd}$, for every $A\subset Y$.
\item[(iv)] $C_{\varphi}E$ is regular in $\Co\left(X\right)$ if and only if $\varphi$ is skeletal.
\end{theorem}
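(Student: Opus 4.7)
My plan is to first establish a \emph{bounded Urysohn} lemma for $E$ and to record the identity $(C_\varphi E)_A=C_\varphi E_{\varphi(A)}$, which together with $E_B=E_{\overline B}$ (holding by the Urysohn property of $E$) I will use throughout. The lemma states that for every open $V\subset Y$ and $y_0\in V$ there exists $h\in E_{+}$, bounded, vanishing outside $V$, with $h(y_0)>0$. Its proof is a double application of Urysohn: take $g_0\in E_{+}$ supported in $V$ with $g_0(y_0)>0$; use continuity of $g_0$ to shrink to an open $V_1\ni y_0$, $V_1\subset V$, on which $g_0$ is bounded; then Urysohn again supplies $g_1\in E_{+}$ supported in $V_1$ with $g_1(y_0)>0$, and $h:=g_0\wedge g_1$ is in $E_{+}$, supported in $V_1$, positive at $y_0$, and bounded by $\sup_{V_1} g_0$.

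Parts (i) and (ii) are then direct translations. For (ii), $C_\varphi E$ is Urysohn iff for every open $U\subset X$ and $x\in U$ there is $g\in E$ with $g|_{\varphi(X\setminus U)}=0$ and $g(\varphi(x))\ne 0$, and by the Urysohn property of $E$ this reduces to $\varphi(x)\notin\overline{\varphi(X\setminus U)}$, which is exactly the condition that $\varphi$ is a topological embedding. For (i), the weakly Urysohn condition translates analogously to: for every closed $A\subsetneq X$ there is a nonzero $g\in E_{\overline{\varphi(A)}}$ with $g\circ\varphi\ne\0$, i.e.\ $\varphi(X)\not\subset\overline{\varphi(A)}$, which is irreducibility. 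For the implication ``irreducible $\Rightarrow$ order dense'', given open $U\subset X$ I set $V:=Y\setminus\overline{\varphi(X\setminus U)}$ (open and intersecting $\varphi(X)$ by irreducibility), choose $y_0\in V\cap\varphi(X)$, and apply the bounded Urysohn lemma; after rescaling I obtain $g\in E$ with $\0<g\le\1_V$ and $g(y_0)>0$, whence $\0<C_\varphi g\le\1_U$, giving order density via Proposition \ref{sub}(ii). Order density trivially implies weakly Urysohn.

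For (iii), I prove ``order continuous $\Leftrightarrow$ almost open'' via Proposition \ref{inf}. If $\varphi$ is almost open and $g_i\downarrow\0$ in $E$ (hence in $\Co(Y)$, since $E$ is regular), then for every open $U\subset X$ the set $\Int\overline{\varphi(U)}$ is nonempty, and Proposition \ref{inf} supplies some $g_i$ with $g_i(y)<1/n$ at a point $y\in\Int\overline{\varphi(U)}$; continuity of $g_i$ propagates this small value to an open neighborhood inside $\Int\overline{\varphi(U)}$, which intersects $\varphi(U)$, producing $x\in U$ with $(C_\varphi g_i)(x)<1/n$ and thus $C_\varphi g_i\downarrow\0$ in $\Co(X)$. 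Conversely, if $\varphi$ is not almost open there is open $U$ with $\overline{\varphi(U)}$ nowhere dense, so Proposition \ref{sub}(iii) makes $E_{\overline{\varphi(U)}}$ order dense in $E$; each $g\in E_{+}$ is then the supremum of $\{h\in E_{\overline{\varphi(U)}}:0\le h\le g\}$, order continuity transports this through $C_\varphi$ by Theorem \ref{hoc}(ii), and since every $C_\varphi h$ in this family vanishes on $U$, the pointwise supremum forces $C_\varphi g|_U\equiv 0$ for all $g\in E_{+}$, contradicting the Urysohn property of $E$ at any point of $\varphi(U)$. The condition $C_\varphi E_A^{dd}\subset(C_\varphi E_A)^{dd}$ follows from order continuity by Theorem \ref{hoc}(v); for the converse, taking $A:=\overline{\varphi(U)}$ for a putative bad $U$, the Urysohn computation of Section \ref{s} yields $E_A^{dd}=E_{\overline{\Int\overline A}}=E$, while $C_\varphi E_A\subset\Co(X)_{\overline U}$ and the latter is a band by Corollary \ref{bpb}, so the hypothesis again forces every $C_\varphi g$ to vanish on $U$ and we obtain the same contradiction.

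Part (iv) I would obtain by a clean reduction to (iii). Let $Z:=\overline{\varphi(X)}$, let $\tilde\varphi\colon X\to Z$ be the co-restriction, and put $E_Z:=\{g|_Z:g\in E\}\subset\Co(Z)$. A direct check using density of $\varphi(X)$ in $Z$ gives that $\varphi$ is skeletal iff $\tilde\varphi$ is almost open. The Urysohn property of $E_Z$ is immediate from that of $E$, and the bounded Urysohn lemma applied in $Y$ followed by restriction and rescaling yields order density of $E_Z$ in $\Co(Z)$. The homomorphism $C_{\tilde\varphi}\colon E_Z\to\Co(X)$ satisfies $C_{\tilde\varphi}E_Z=C_\varphi E$, and by the Urysohn property of $E_Z$ together with density of $\varphi(X)$ in $Z$ its kernel is $\{0\}$, trivially a band. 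Theorem \ref{hoc}(iii) then reads: $C_{\tilde\varphi}$ is order continuous iff $C_\varphi E$ is regular in $\Co(X)$; applying the already established part (iii) to $\tilde\varphi$ and $E_Z$ identifies the former with ``$\tilde\varphi$ almost open'', i.e.\ with $\varphi$ skeletal. The main obstacle throughout is the bounded Urysohn lemma and, specifically in (iv), verifying that the restricted sublattice $E_Z$ inherits both Urysohn and order density so that (iii) may be invoked.
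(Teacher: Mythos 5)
Your proof is correct and follows essentially the same route as the paper: parts (i) and (ii) by translating the (weakly) Urysohn conditions through the identity $\left(C_{\varphi}E\right)_{A}=C_{\varphi}E_{\varphi\left(A\right)}$, part (iii) via Proposition \ref{inf} in one direction and the band $\Co\left(X\right)_{\overline{U}}$ together with $E_{A}^{dd}=E_{\overline{\Int\overline{A}}}$ and the already-proven implications of Theorem \ref{hoc} in the other (so there is no circularity), and part (iv) by co-restricting $\varphi$ and combining the equivalence (i)$\Leftrightarrow$(iii) of Theorem \ref{hoc} with part (iii). Two local differences are worth recording. First, your bounded Urysohn lemma is a genuine and correct addition: it produces an element of $E$ which is bounded, supported in a prescribed open set and nonzero at a prescribed point, which is exactly what is needed to make ``irreducible $\Rightarrow$ $C_{\varphi}E$ order dense'' airtight --- the paper's own argument takes an arbitrary $f\in\left(\0,\1_{V}\right]\cap E$, which a priori might vanish on $\varphi\left(X\right)$, so your lemma supplies the missing point-control; it also lets you verify directly in (iv) that the restricted lattice $E_{Z}$, with $Z=\overline{\varphi\left(X\right)}$ rather than the paper's $Z=\varphi\left(X\right)$, is an order dense Urysohn sublattice of $\Co\left(Z\right)$, instead of invoking (i) and (ii) for the inclusion map, and your observation that skeletality of $\varphi$ amounts to almost openness of the co-restriction onto the closure of the image is easily checked. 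Second, in (iii) your separate argument that failure of almost openness destroys order continuity is redundant, since the cycle almost open $\Rightarrow$ order continuous $\Rightarrow$ $C_{\varphi}E_{A}^{dd}\subset\left(C_{\varphi}E_{A}\right)^{dd}$ $\Rightarrow$ almost open already closes; moreover its phrase ``the pointwise supremum forces $C_{\varphi}g|_{U}\equiv 0$'' conflates the pointwise supremum with the order supremum in $\Co\left(X\right)$. The conclusion is nevertheless true: if every member of a set of positive functions vanishes on an open $U$ and the set has a supremum $s$ in $\Co\left(X\right)$, then for any $f\in\Co\left(X\right)$ with $\0\le f\le s\left(x_{0}\right)\1$, $f$ supported in $U$ and $f\left(x_{0}\right)=s\left(x_{0}\right)$, the function $s-f\wedge s$ is again an upper bound, which forces $s$ to vanish on $U$ --- so this is a fixable imprecision rather than a gap, and in any case the dd-chain makes that step unnecessary.
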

\begin{proof}
(i): First recall that every order dense sublattice is weakly Urysohn. If $C_{\varphi}E$ is a weakly Urysohn sublattice, then for every open nonempty $U\subset X$ there is $f\in  E$ such that $\0<f\circ\varphi\in F_{X\backslash U}$. Let $V=Y\backslash f^{-1}\left(0\right)$, which is open; we have $\varnothing\ne \varphi^{-1}\left(V\right)\subset U$. Since $U$ was chosen arbitrarily, it follows from Proposition \ref{irr} that $\varphi$ is irreducible.

Assume that $\varphi$ is irreducible and let $U\subset X$ be open and nonempty. There is an open $V\subset Y$ such that $\varnothing\ne \varphi^{-1}\left(V\right)\subset U$. Since $E$ is order dense, there is $f\in \left(\0,\1_{V}\right]\cap E$. Then, $f\circ\varphi\in \left(\0,\1_{U}\right]$, and since $U$ was chosen arbitrarily, it follows from part (ii) of Proposition \ref{sub} that $C_{\varphi}E$ is order dense.\medskip

(ii): Sufficiency is easy to see (essentially it is enough to check that restrictions of the elements of an Urysohn sublattice form an Urysohn sublattice). Necessity: Fix $x\in X$ and a closed $A\subset X$ which does not contain $x$. Since $C_{\varphi}E$ is an Urysohn sublattice, there is $f\in E$ such that $C_{\varphi}f$ vanishes on $A$, but not at $x$. In other words, $C_{\varphi}f\in \Co\left(X\right)_{A}\backslash \Co\left(X\right)_{\left\{x\right\}}$, or equivalently, $f\in E_{\varphi\left(A\right)}\backslash E_{\left\{\varphi\left(x\right)\right\}}$. Since $E$ is an Urysohn sublattice of $\Co\left(Y\right)$, existence of $f$ is equivalent to $\varphi\left(x\right)\not\in \overline{\varphi\left(A\right)}$. Substituting a singleton instead of $A$ yields injectivity of $\varphi$. Running $x$ through all points of $X\backslash A$ shows that $\varphi\left(A\right)$ is closed in $\varphi\left(X\right)$. Hence, $\varphi$ is an injection, which is a closed map onto its image. Thus, it is a topological embedding.\medskip

(iii): Assume that $\varphi$ is almost open. Let $f_{\gamma}\downarrow_{E} \0$. Since $E$ is regular, we get $f_{\gamma}\downarrow_{\Co\left(Y\right)} \0$. Assume that there is $g\in \Co\left(X\right)$ such that $\0< g \le f_{\gamma}\circ\varphi$, for every $\gamma\in\Gamma$. Then, there is an open $U\subset X$ and $\varepsilon>0$ such that $g\ge \varepsilon \1_{U}$, from where  $f_{\gamma}\ge\varepsilon \1_{\varphi\left(U\right)}$, for every $\gamma$; from continuity, $f_{\gamma}\ge\varepsilon \1_{\overline{\varphi\left(U\right)}}\ge\varepsilon\1_{\Int\overline{\varphi\left(U\right)}}>\0 $. This contradicts  $f_{\gamma}\downarrow_{\Co\left(Y\right)} \0$, due to Proposition \ref{inf}. Thus,  $f_{\gamma}\circ\varphi \downarrow_{\Co\left(X\right)} \0$, and so $C_{\varphi}$ is order-continuous.\medskip

Since $C_{\varphi}$ is a homomorphism, from (already proven) implication (i)$\Rightarrow$(v) in Theorem \ref{hoc}, order continuity of $C_{\varphi}$ implies $C_{\varphi}G^{dd}\subset\left(C_{\varphi}G\right)^{dd}$, for every $G\subset E$, including $G=E_{A}$, for $A\subset Y$.\medskip

Assume that $C_{\varphi}E_{A}^{dd}\subset\left(C_{\varphi}E_{A}\right)^{dd}$, for every $A\subset Y$. Let $U\subset X$ be open and nonempty. From the observations before the theorem we have $C_{\varphi}E_{\overline{\varphi\left(U\right)}}\subset \Co\left(X\right)_{\varphi^{-1}\left(\overline{\varphi\left(U\right)}\right)}\subset \Co\left(X\right)_{U}$. Since the latter is a band in $\Co\left(X\right)$, we get $$C_{\varphi}E_{\Int \overline{\varphi\left(U\right)}}=C_{\varphi}E_{\overline{\varphi\left(U\right)}}^{dd}\subset\left(C_{\varphi}E_{\overline{\varphi\left(U\right)}}\right)^{dd}\subset \Co\left(X\right)_{U}^{dd}=\Co\left(X\right)_{U},$$ from where $E_{\Int \overline{\varphi\left(U\right)}}\subset C_{\varphi}^{-1}\Co\left(X\right)_{U}=E_{\varphi\left(U\right)}$. Since $E$ is an Urysohn sublattice it follows that $\overline{\varphi\left(U\right)}\subset \overline{\Int \overline{\varphi\left(U\right)}}$, and so $\Int \overline{\varphi\left(U\right)}\ne\varnothing$. As $U$ was chosen arbitrarily, we conclude that $\varphi$ is almost open.\medskip

(iv): Let $Z=\varphi\left(X\right)\subset Y$, let $\psi$ be the inclusion map from $Z$ into $Y$ and let $\varphi'$ be $\varphi$ viewed as a map from $X$ onto $Z$. Since $\psi$ is a topological embedding, it is irreducible, and so from (i) and (ii) $C_{\psi}E$ is an order dense Urysohn sublattice of $\Co\left(Z\right)$. Hence, from (iii) almost openness of $\varphi'$ is equivalent to order continuity of $C_{\varphi'}$. Since $\varphi'$ is a surjection, $C_{\varphi'}$ is an injection, and so from (already proven) equivalency (i)$\Leftrightarrow$(iii) in Theorem \ref{hoc}, order continuity of $C_{\varphi'}$ is equivalent to regularity of $C_{\varphi'}C_{\psi}E=C_{\varphi}E$ in $\Co\left(X\right)$.
\end{proof}

\begin{remark}
It is not hard to prove that a multiplication operator is always order continuous. Also, similarly to part (iii) one can show that if $g\in\Co\left(X\right)$, then $M_{g}C_{\varphi}$ is order continuous if and only if $\left.\varphi\right|_{X\backslash g^{-1}\left(0\right)}$ is almost open.
\qed\end{remark}

Let us use the obtained information to complete the proof of Theorem \ref{hoc}. In order to do so we will use the following fact which says that any homomorphism is locally a composition operator.

\begin{theorem}[Krein-Kakutani representation theorem] Let $T:F\to E$ be a homomorphism between Archimedean vector lattices, such that $Tf=e$, where $f$ and $e$ are strong units of $F$ and $E$ respectively. Then, there exist
\begin{itemize}
\item compact spaces $K$ and $L$ and a continuous map $\varphi:K\to L$,
\item dense sublattices $H$ and $G$ of $\Co\left(K\right)$ and $\Co\left(L\right)$, respectively,
\item isomorphisms $S:G\to F$ and $R:H\to E$,
\end{itemize}
such that $f=S\1_{L}$, $e=R\1_{K}$ and $T=R\left.C_{\varphi}\right|_{G}S^{-1}$.
\end{theorem}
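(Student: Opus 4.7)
The plan is to invoke the classical Kakutani representation theorem twice to identify $F$ and $E$ with dense unital sublattices of $\Co(L)$ and $\Co(K)$ for suitable compact Hausdorff spaces $L$ and $K$, then extract the map $\varphi$ pointwise from the induced homomorphism, using that unital lattice homomorphisms into $\R$ are point evaluations.

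First, recall the classical Kakutani result: an Archimedean vector lattice with strong unit $f$ carries the norm $\|\cdot\|_{f}$, and the norm-completion is isometrically lattice-isomorphic to $\Co(L)$ for some compact Hausdorff $L$, with $f$ going to $\1_{L}$. This provides an injective homomorphism with dense range $G \subset \Co(L)$; let $S \colon G \to F$ be its inverse, so $S\1_{L} = f$. Apply the same construction to $E$ with strong unit $e$ to obtain compact $K$, a dense sublattice $H \subset \Co(K)$, and an isomorphism $R \colon H \to E$ with $R\1_{K} = e$. Set $\widetilde{T} = R^{-1} T S \colon G \to H$; this is a homomorphism, and since $T f = e$ we have $\widetilde{T}\1_{L} = \1_{K}$.

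Next, construct $\varphi$ pointwise. For each $x \in K$, the composition $\delta_{x}^{H} \circ \widetilde{T} \colon G \to \R$ is a lattice homomorphism sending $\1_{L}$ to $1$; in particular it is positive and sends the closed unit ball of $(G, \|\cdot\|_{\8})$ into $[-1,1]$, so it extends uniquely to a unital positive lattice homomorphism $\Co(L) \to \R$. Every such homomorphism on $\Co(L)$ is a point evaluation $\delta_{\varphi(x)}$ for a unique $\varphi(x) \in L$ (uniqueness coming from Hausdorffness of $L$). This defines $\varphi \colon K \to L$ satisfying $(\widetilde{T}g)(x) = g(\varphi(x))$ for all $g \in G$ and $x \in K$, i.e. $\widetilde{T} = C_{\varphi}|_{G}$. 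Continuity of $\varphi$ is then automatic: if $x_{\alpha} \to x$ in $K$, then for each $g \in G$ we have $g(\varphi(x_{\alpha})) = (\widetilde{T}g)(x_{\alpha}) \to (\widetilde{T}g)(x) = g(\varphi(x))$ because $\widetilde{T}g \in H \subset \Co(K)$; by density of $G$ in $\Co(L)$ the same pointwise convergence holds for every $h \in \Co(L)$, and since $\Co(L)$ separates points of the compact Hausdorff space $L$, every subnet limit of $\{\varphi(x_{\alpha})\}$ must equal $\varphi(x)$, hence $\varphi(x_{\alpha}) \to \varphi(x)$.

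With $\varphi$, $S$, $R$, $G$, $H$ in hand, the identity $T = R\, C_{\varphi}|_{G}\, S^{-1}$ is precisely the relation $\widetilde{T} = C_{\varphi}|_{G}$, and the conditions $S\1_{L}=f$, $R\1_{K}=e$ are built into the Kakutani step. The main obstacle in this plan is not any single step but ensuring the foundational ingredient is cleanly available: the classical Kakutani representation (existence of $L$, density of $G$, and the bijective correspondence between unital lattice homomorphisms $\Co(L) \to \R$ and points of $L$). Everything else reduces to routine verifications using density and Hausdorffness; in particular, the Archimedean hypothesis is used only to guarantee that $\|\cdot\|_{f}$ and $\|\cdot\|_{e}$ are genuine norms, making the Kakutani completion applicable.
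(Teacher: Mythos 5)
Your proposal is correct and follows essentially the same route as the paper: both apply the classical Kakutani representation twice to produce $K$, $L$, $G$, $H$, $S$, $R$, and then identify $\widetilde{T}=R^{-1}TS$ (a homomorphism on a dense unital sublattice with $\widetilde{T}\1_{L}=\1_{K}$) as the restriction of a composition operator. The only difference is that the paper extends $\widetilde{T}$ by norm continuity to all of $\Co\left(L\right)$ and cites the standard fact that a unit-preserving continuous homomorphism $\Co\left(L\right)\to\Co\left(K\right)$ is a composition operator, whereas you reprove that fact by hand, extending each functional $\delta_{x}^{H}\circ\widetilde{T}$ to a unital real-valued lattice homomorphism on $\Co\left(L\right)$, identifying it with a point evaluation, and checking continuity of $\varphi$ by a subnet argument.
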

\begin{proof}
Existence of $K$, $L$, $H$, $G$, $S$ and $R$ follows from the usual Kakutani representation theorem (see e.g. \cite[Theorem 2.1.3]{mn}). Note that $S$ is an isometry with respect to $\|\cdot\|_{f}$ and $\|\cdot\|$, while $R$ is an isometry with respect to $\|\cdot\|_{e}$ and $\|\cdot\|$. Since $T$ is a homomorphism and $Tf=e$, it follows that $T\left[-f,f\right]\subset \left[-e,e\right]$, and so $T$ is continuous with respect to $\|\cdot\|_{f}$ and $\|\cdot\|_{e}$. Hence, $R^{-1}TS$ is a continuous homomorphism from a dense sublattice of $\Co\left(L\right)$ into $\Co\left(K\right)$. Therefore it admits an extension to a continuous homomorphism from $\Co\left(L\right)$ into $\Co\left(K\right)$, which maps $\1_{L}$ into $\1_{K}$, and so is a composition operator (see e.g. \cite[Theorem 3.2.12]{mn}).\end{proof}

\begin{proof}[Proof of (v')$\Rightarrow$(i) in Theorem \ref{hoc}]
Fix $f\in F_{+}$ and let $e=Tf$. Apply the Krein-Kakutani representation theorem to $\left.T\right|_{F_{f}}:F_{f}\to E_{e}$ and produce $K$, $L$, $\varphi$, $H$, $G$, $R$ and $S$. Note that $G$ is dense in $\Co\left(L\right)$, and so from part (i) of Proposition \ref{dense} it is an order dense Urysohn sublattice.

Let $A\subset L$, and let $B=SG_{A}\subset F_{f}$. Since $S$ is an isomorphism, $SG_{A}^{dd}=\left(SG_{A}\right)^{dd}_{F_{f}}=B^{dd}_{F_{f}}$. Since $F_{f}$ and $E_{e}$ are regular in $F$ and $E$ respectively, from the condition (v) and (already proven) implication (i)$\Rightarrow$(v) of Corollary \ref{reg}, we have
$$TB^{dd}_{F_{f}}=T\left(B^{dd}\cap F_{f}\right)\subset T\left(B^{dd}\right)  \cap TF_{f}\subset \left(TB\right)^{dd}\cap E_{e}= \left(TB\right)^{dd}_{E_{e}}.$$
Since $R$ is an isomorphism and $H$ is regular in $\Co\left(K\right)$ we have
$$C_{\varphi}G_{A}^{dd}=R^{-1}TSG_{A}^{dd}=R^{-1}TB^{dd}_{F_{f}}\subset R^{-1}\left(TB\right)^{dd}_{E_{e}}=\left(R^{-1}TB\right)^{dd}_{H}=\left(C_{\varphi}G_{A}\right)^{dd}_{\Co\left(K\right)}\cap H.$$
As $G$ is an order dense Urysohn sublattice, and $A$ was chosen arbitrarily, $C_{\varphi}$ is order-continuous, by virtue of part (iii) of Theorem \ref{com}. Hence, $\left.T\right|_{F_{f}}=R\left.C_{\varphi}\right|_{G}S^{-1}$ is order continuous. Since $f\in F_{+}$ was chosen arbitrarily, from Corollary \ref{oc}, $T$ is order continuous.
\end{proof}

Let us finalize the proof of Theorem \ref{hoc}.

\begin{proof}[Proof of (vi')$\Rightarrow$(i) in Theorem \ref{hoc}]
First, note that if the set $G=\left\{h\in F_{+},~\dim TF_{h}=\8\right\}$ is nonempty, then it is majorizing, since if $g\in G$ and $f\in F_{+}$, then $f+g\in G$ and $f\le f+g$. Also note that if $F$ has the $\sigma$-property and $\dim TF=\8$, then $G\ne\varnothing$. Indeed, since $\dim TF=\8$ there are $\left\{f_{n}\right\}_{n\in\N}$ such that $\left\{Tf_{n}\right\}_{n\in\N}$ are linearly independent, and from $\sigma$-property there is $f\in F_{+}$ such that $\left\{f_{n}\right\}_{n\in\N}\subset F_{f}$, and so $f\in G$.

Hence, similarly to (v)$\Rightarrow$(i), we can reduce the proof to the case when $F$ and $E$ are dense sublattices of $\Co\left(L\right)$ and $\Co\left(K\right)$, respectively, where $K$ and $L$ are compact, such that $\1_{L}\in F$ and $\1_{K}\in E$, and $T=C_{\varphi}$, for some $\varphi:K\to L$. It is given that $\dim C_{\varphi}F=\8$ and $C_{\varphi}H$ is regular in $E$ (and so in $\Co\left(K\right)$), for every order dense sublattice $H$ of $F$, and we need to show that $\varphi$ is almost open (note that order dense sublattices of $F$ correspond to order dense sublattices of $F_{f}$ according to Remark \ref{menag}).

The condition $\dim C_{\varphi}F=\8$ implies that $\varphi\left(K\right)$ is an infinite subset of the compact space $L$, and so it has an accumulation point, say $y$. Then $\left\{y\right\}$ is a nowhere dense in $\varphi\left(K\right)$, and so combining part (iv) of Theorem \ref{com} with Proposition \ref{wo} yields that $\varphi^{-1}\left(y\right)$ is nowhere dense. Assume that $U\subset K$ is open and such that $\varphi\left(U\right)$ is nowhere dense. Let $x\in U\backslash \varphi^{-1}\left(y\right)$.

Let $V$ be an open subset of $L$ such that $\varphi\left(x\right)\in V$ and $y\in W=L\backslash\overline{V}$. Then, $y$ is an accumulation point of $\varphi\left(K\right)\cap W$. Define $M=L\slash\left(\overline{\varphi\left(U\right)}\cap\overline{V}\cup\left\{y\right\}\right)$ and let $\psi: L\to M$ be the corresponding quotient map. In other words, $\psi$ collapses a nowhere dense closed set $\overline{\varphi\left(U\right)}\cap\overline{V}\cup\left\{y\right\}$ into a single point (call it $o$), which is an accumulation point of $\psi\left(\varphi\left(K\right)\cap W\right)$ in $M$. Hence, $\psi\circ\varphi$ sends an open set $U\cap\varphi^{-1}\left(V\right)$, which contains $x$, into $\left\{o\right\}$ which is nowhere dense in $\psi\left(\varphi\left(K\right)\right)$, and so it is not a skeletal map. At the same time since $\psi$ only collapses a closed nowhere dense set, according to Proposition \ref{irr}, $\psi$ is irreducible and $M$ is compact Hausdorff.

From Lemma \ref{dens}, $H=C_{\psi}^{-1}F$ is a dense sublattice of $\Co\left(M\right)$ that contains $\1$. Since $\psi$ is irreducible, from part (i) of Theorem \ref{com}, $C_{\psi}H$ is order dense in $\Co\left(L\right)$, and therefore in $F$. At the same time $\psi\circ\varphi$ is not skeletal, from where $C_{\varphi}C_{\psi}H=C_{\psi\circ\varphi}H$ is not regular in $\Co\left(K\right)$, due to part (iv) of Theorem \ref{com}. Thus, $C_{\varphi}$ sends an order dense sublattice $C_{\psi}H$ of $F$ into a non-regular sublattice of $\Co\left(K\right)$. Contradiction.
\end{proof}

Let us conclude the article with some additional details on the topic raised in Remark \ref{eqr}. Recall that if $X$ is compact, then there is a bijective correspondence between closed sublattices of $\Co\left(X\right)$ that contain $\1$ (and are also subalgebras), continuous surjections from $X$ onto compact spaces and closed equivalence relations on $X$. Theorem \ref{com} allows to characterize order density and regularity of such sublattices in terms of the surjections. Since a surjection $\varphi:X\to Y$ corresponds to the equivalence relation $x\sim y\Leftrightarrow \varphi\left(x\right)=\varphi\left(y\right)$, we can translate those characterizations into the language of the equivalence relations (note that propositions \ref{ao}, \ref{wo}, \ref{irr} and \ref{mirr} can be used to produce more equivalent characterizations).

\begin{proposition}\label{eqq}
Let $X$ be compact and let $E$ be a closed sublattice of $\Co\left(X\right)$ that contains $\1$ and let $\sim$ be the corresponding equivalence relation. Then:
\item[(i)] $E$ is regular if and only if every open nonempty $U\subset X$ contains $A\subset U$ such that the union of all classes of $\sim$ that intersect $A$ is open.
\item[(ii)] $E$ is order dense if and only if for every closed $A\ne X$ the union of all classes of $\sim$ that intersect $A$ is not the whole $X$.
\end{proposition}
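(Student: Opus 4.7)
The plan is to translate both equivalences through the correspondence described in Remark \ref{eqr} and Theorem \ref{com}. Since $E$ is a closed sublattice of $\Co\left(X\right)$ containing $\1$, Proposition \ref{quot} produces a compact space $Y$ and a continuous surjection $\varphi:X\to Y$ such that $C_{\varphi}$ is an isometric isomorphism from $\Co\left(Y\right)$ onto $E$; the associated equivalence relation is $x\sim y \iff \varphi\left(x\right)=\varphi\left(y\right)$, and for every $A\subset X$ the union of $\sim$-classes meeting $A$ coincides with $\varphi^{-1}\left(\varphi\left(A\right)\right)$.

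For part (ii), I would invoke Theorem \ref{com}(i) to see that $E=C_{\varphi}\Co\left(Y\right)$ is order dense in $\Co\left(X\right)$ if and only if $\varphi$ is irreducible. Since $X$ is compact Hausdorff, $\varphi$ is automatically a closed map, so Proposition \ref{mirr}(i) translates irreducibility into $\varphi^{-1}\left(\varphi\left(A\right)\right)\ne X$ for every closed $A\subsetneq X$, which is exactly the stated condition.

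For part (i), Theorem \ref{com}(iv) gives that $E$ is regular iff $\varphi$ is skeletal. Because $\varphi$ is surjective this is the same as almost openness of $\varphi$; because $X$ is compact, $\varphi$ is locally closed, and so the discussion in Section \ref{m} further identifies this with weak openness of $\varphi$, i.e.\ $\Int\varphi\left(U\right)\ne\varnothing$ for every open nonempty $U\subset X$. It remains to check that weak openness matches the stated condition. If a nonempty $A\subset U$ satisfies $B:=\varphi^{-1}\left(\varphi\left(A\right)\right)$ open, then $X\backslash B$ is closed, and the closed surjection $\varphi$ yields $\varphi\left(X\backslash B\right)=Y\backslash\varphi\left(A\right)$, so $\varphi\left(A\right)$ is a nonempty open subset of $\varphi\left(U\right)$, giving weak openness. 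Conversely, given $V:=\Int\varphi\left(U\right)\ne\varnothing$, the set $A:=U\cap\varphi^{-1}\left(V\right)$ is nonempty (since $V\subset\varphi\left(U\right)$), satisfies $\varphi\left(A\right)=V$, and hence $\varphi^{-1}\left(\varphi\left(A\right)\right)=\varphi^{-1}\left(V\right)$ is open.

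The only real obstacle is the bookkeeping among the various notions of Section \ref{m}: one must ensure that skeletal reduces to weak openness under our hypotheses (via surjectivity and local closedness), and read ``contains $A\subset U$'' as ``contains a nonempty $A\subset U$'' --- without the latter the condition in (i) would be vacuous. Beyond this the argument is a direct translation, with no additional analytic input required.
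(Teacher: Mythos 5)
Your proposal is correct and follows essentially the route the paper intends: the paper states Proposition \ref{eqq} without a separate proof, presenting it as a direct translation of Theorem \ref{com} (parts (i) and (iv)) through the correspondence of Proposition \ref{quot}/Remark \ref{eqr}, with propositions \ref{mirr}, \ref{wo}, etc.\ supplying the topological reformulations, which is exactly what you carry out. Your bookkeeping is sound, including the observation that $A$ must be read as nonempty and that for the closed surjection $\varphi$ (compact $X$) skeletal, almost open and weakly open coincide, so the saturated-set condition in (i) is equivalent to regularity.
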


Let $E$ and $F$ be closed sublattices of $\Co\left(X\right)$ that contain $\1$ and let $\sim$ and $\approx$ be the corresponding equivalence relations on $X$. One can show that $E\cap F$ corresponds to the smallest closed equivalence relation $\sim\vee\approx$ on $X$ that contains $\sim\cup\approx$, whereas $\sim\cap\approx$ corresponds to the smallest closed sublattice that contains both $E$ and $F$. Note that if $E$ and $F$ correspond to continuous surjections $\varphi:X\to Y$ and $\psi:X\to Z$, then $\sim\cap\approx$ also corresponds to the diagonal map $\varphi\times\psi:X\to Y\times Z$ (viewed as a surjection onto its image). Let us consider two examples that show that regularity of sublattices is not well behaved with respect to intersections and ``unions'', and order density is not well behaved with respect to intersections.

\begin{example}\label{inte}
Let $Y$ be the planar segment that joins $\left(0,0\right)$ with $\left(-1,-1\right)$ and let $X=Y\cup \left[0,1\right]^{2}$. Let $\sim$ and $\approx$ be the equivalence relations on $X$ that identifies points on $\left[0,1\right]^{2}$ along the vertical and horizontal segments respectively. It is easy to see that $\sim$ and $\approx$ satisfy the condition (i) in Proposition \ref{eqq}, and so the corresponding sublattices $E$ and $F$ are regular. It is easy to check that $\sim\vee\approx$ identifies all points on $\left[0,1\right]^{2}$. The criterion in part (i) of Proposition \ref{eqq} fails for any open subset of $\left[0,1\right]^{2}$, and so $E\cap F$ is not regular.
\qed\end{example}\smallskip

\begin{example}\label{inter}
Let $X$, $Y$, $\sim$ and $\approx$ be as in the previous example, and let $\sim'$ and $\approx'$ be extensions of $\sim$ and $\approx$, respectively, that also identify points $\left(t,t\right)$ and $\left(-t,-t\right)$, for every $t\in\left(0,1\right]$. It is easy to check that these are closed equivalence relations, which satisfy the condition (i) in Proposition \ref{eqq}, and so the corresponding sublattices $E'$ and $F'$ are regular. Let $H$ be the sublattice generated by $E$ and $F$. We will show that $\overline{H}$ is not regular, from where and Proposition \ref{rden} it will follow that $H$ is not regular. The equivalence relation that corresponds to $\overline{H}$ is $\sim'\cap\approx'$, which only identifies $\left(t,t\right)$ and $\left(-t,-t\right)$, for every $t\in\left(0,1\right]$. The corresponding surjection from $X$ onto $\left[0,1\right]^{2}$ sends an open set $Y\backslash\left\{\left(0,0\right)\right\}$ into a nowhere dense diagonal of the square. Therefore, this map is not almost open, and so $\overline{H}$ is not regular.
\qed\end{example}\smallskip

\begin{example}\label{intero}
Let $X=\left\{0,1\right\}^{\N}$ be the Cantor space, let $\varphi:X\to\left[0,1\right]$ be the ``evaluation of binary expressions'', i.e. $\varphi\left(a_{n}\right)_{n\in\N}=\sum\limits_{n\in\N}\frac{a_{n}}{2^{n}}$. This map identifies elements of the form $\left(A_{n},1,0,0,...\right)$ and $\left(A_{n},0,1,1,...\right)$, for $A_{n}\in \left\{0,1\right\}^{n}$. Let $X_{0}$ and $X_{1}$ be the set of elements of the first and second type, respectively. It is easy to see that both $X_{0}$ and $X_{1}$ as well $Y=X\backslash\left(X_{0}\cup X_{1}\right)$ are dense in $X$. Since $X$ is compact, it follows from part (ii) of Proposition \ref{mirr} that $\varphi$ is irreducible.

Let $\psi,\theta:X\to X$ be the transformations that interchange $2n-1$-th with $2n$-th and $2n$-th with $2n+1$-th coordinates, respectively, for every $n\in\N$. Since these maps are homeomorphisms, it follows that $\varphi\circ\psi$ and $\varphi\circ\theta$ are irreducible. Let $\sim$, $\approx$ and $\simeq$ be the equivalence relations generated by $\varphi$, $\varphi\circ\psi$ and $\varphi\circ\theta$, respectively, and let $\cong=\sim\vee\approx\vee\simeq$ ($\cong$ is the supremum of $\sim$, $\approx$ and $\simeq$). We will show that $\cong$ identifies all elements of $X$. This will imply that the intersection of the corresponding order dense sublattices of $\Co\left(X\right)$ contains only constants.

For $n\in\N_{0}=\N\cup\left\{0\right\}$ let $A_{2n}=\left(a_{1},...,a_{2n}\right)\in \left\{0,1\right\}^{2n}$, and let\linebreak $B_{2n}=\left(a_{2},a_{1},a_{4},a_{3},...,a_{2n},a_{2n-1}\right)$. We have $\psi\left(A_{2n},1,0,0,...\right)=\left(B_{2n},0,1,0,...\right)\sim \left(B_{2n},0,0,1,...\right)=\psi\left(A_{2n},0,0,1,...\right)$, from where $\left(A_{2n},1,0,0,...\right)\approx\left(A_{2n},0,0,1,...\right)$. Since $\left(A_{2n},0,0,1,...\right)\sim \left(A_{2n},0,1,0,...\right)$, it follows that $\left(A_{2n},1,0,0,...\right)\cong \left(A_{2n},0,1,0,...\right)$, and similarly $\left(A_{2n},0,1,0,...\right)\cong \left(A_{2n},1,1,0,...\right)$. Analogously, we also have\linebreak $\left(A_{2n+1},1,0,0,...\right)\cong \left(A_{2n+1},0,1,0,...\right)\cong \left(A_{2n+1},1,1,0,...\right)$, where $A_{2n+1}\in \left\{0,1\right\}^{2n+1}$.

Let $y_{0}=\left(0,...\right)$, $y_{1}=\left(1,0,...\right)$ and $X'_{0}=X_{0}\backslash \left\{y_{0}\right\}$. Since the latter is dense in $X$, it is enough to show that $\cong$ identifies every element of $X'_{0}$ with $y_{1}$. Let $x\in X'_{0}$, and let $n$ be the position of the last non-zero coordinate of $x$. We will prove our claim by induction over $n$. If $n=1$, then $x=y_{1}$, and so $x\cong y_{1}$. Assume that the claim is proven for $n=1,...,m$ and let $n=m+1$. If $n=2k$, for some $k\in\N$, we have that $x$ is either of the form $\left(A_{2k-2},1,1,0,...\right)$, or $\left(A_{2k-2},0,1,0,...\right)$. In both cases $x\cong \left(A_{2k-2},1,0,0,...\right)\cong y_{1}$, by the assumption of induction. The case when $n=2k-1$ is done similarly.
\qed\end{example}

Note that we showed that the intersection of three closed order dense sublattices can fail to be order dense. This means that it is not true that the intersection of two closed order dense sublattices is order dense. However, we did not construct  an explicit counterexample to that claim.

\begin{question}
Is there a simpler and explicit example of two closed order dense sublattices of a Banach lattice, such that their intersection is not order dense? Can such intersection be non-regular?
\end{question}

\section{Acknowledgements}

The author wants to thank Vladimir Troitsky for many valuable discussions on the topic of this paper, and Taras Banakh who contributed an idea for Example \ref{inter} and the service \href{mathoverflow.com/}{MathOverflow} which made it possible.

\begin{bibsection}
\begin{biblist}

\bib{alek}{book}{
   author={Aleksandrov, P. S.},
   title={{Introduction into set theory and general topology}},
   language={Russian},
   note={With the collaboration of V. I. Za\u{\i}cev and V. V. Fedor\v{c}uk},
   publisher={Izdat. ``Nauka'', Moscow},
   date={1977},
   pages={367},
}

\bib{ab}{book}{
   author={Aliprantis, Charalambos D.},
   author={Burkinshaw, Owen},
   title={Positive operators},
   note={Reprint of the 1985 original},
   publisher={Springer, Dordrecht},
   date={2006},
   pages={xx+376},
}

\bib{bkm}{article}{
   author={Banakh, Taras},
   author={Kucharski, Andrzej},
   author={Martynenko, Marta},
   title={A spectral characterization of skeletal maps},
   journal={Cent. Eur. J. Math.},
   volume={11},
   date={2013},
   number={1},
   pages={161--169},
}

\bib{bn}{book}{
   author={Beckenstein, Edward},
   author={Narici, Lawrence},
   title={Topological vector spaces},
   series={Pure and Applied Mathematics (Boca Raton)},
   volume={296},
   edition={2},
   publisher={CRC Press, Boca Raton, FL},
   date={2011},
   pages={xviii+610},
}

\bib{bot}{article}{
   author={Blokh, Alexander},
   author={Oversteegen, Lex},
   author={Tymchatyn, E. D.},
   title={On almost one-to-one maps},
   journal={Trans. Amer. Math. Soc.},
   volume={358},
   date={2006},
   number={11},
   pages={5003--5014},
}

\bib{bh}{article}{
   author={Boulabiar, Karim},
   author={Hajji, Rawaa},
   title={Representation of strongly truncated Riesz spaces},
   journal={Indag. Math. (N.S.)},
   volume={31},
   date={2020},
   number={5},
   pages={741--757},
}

\bib{burke}{article}{
   author={Burke, Maxim R.},
   title={Continuous functions which take a somewhere dense set of values on
   every open set},
   journal={Topology Appl.},
   volume={103},
   date={2000},
   number={1},
   pages={95--110},
}

\bib{djvdw}{article}{
   author={de Jeu, Marcel},
   author={van der Walt, Jan Harm},
   title={On order continuous duals of vector lattices of continuous
   functions},
   journal={J. Math. Anal. Appl.},
   volume={479},
   date={2019},
   number={1},
   pages={581--607},
}

\bib{engelking}{book}{
   author={Engelking, Ryszard},
   title={General topology},
   series={Sigma Series in Pure Mathematics, 6},
   publisher={Heldermann Verlag},
   place={Berlin},
   date={1989},
   pages={viii+529},
}

\bib{fremlin}{article}{
   author={Fremlin, D. H.},
   title={Riesz spaces with the order-continuity property. I},
   journal={Math. Proc. Cambridge Philos. Soc.},
   volume={81},
   date={1977},
   number={1},
   pages={31--42},
}

\bib{hvm}{article}{
   author={Hager, A. W.},
   author={van Mill, J.},
   title={Egoroff, $\sigma$, and convergence properties in some archimedean
   vector lattices},
   journal={Studia Math.},
   volume={231},
   date={2015},
   number={3},
   pages={269--285},
}

\bib{hager}{article}{
   author={Hager, Anthony W.},
   title={The $\sigma$-property in $C(X)$},
   journal={Comment. Math. Univ. Carolin.},
   volume={57},
   date={2016},
   number={2},
   pages={231--239},
}

\bib{kv2}{article}{
   author={Kandi\'{c}, M.},
   author={Vavpeti\v{c}, A.},
   title={The countable sup property for lattices of continuous functions},
   journal={J. Math. Anal. Appl.},
   volume={465},
   date={2018},
   number={1},
   pages={588--603},
}

\bib{kv1}{article}{
   author={Kandi\'{c}, M.},
   author={Vavpeti\v{c}, A.},
   title={Topological aspects of order in $C(X)$},
   journal={Positivity},
   volume={23},
   date={2019},
   number={3},
   pages={617--635},
}

\bib{zl}{book}{
   author={Luxemburg, W. A. J.},
   author={Zaanen, A. C.},
   title={Riesz spaces. Vol. I},
   note={North-Holland Mathematical Library},
   publisher={North-Holland Publishing Co., Amsterdam-London; American
   Elsevier Publishing Co., New York},
   date={1971},
   pages={xi+514},
}

\bib{mn}{book}{
   author={Meyer-Nieberg, Peter},
   title={Banach lattices},
   series={Universitext},
   publisher={Springer-Verlag, Berlin},
   date={1991},
   pages={xvi+395},
}

\bib{pw}{book}{
   author={Porter, Jack R.},
   author={Woods, R. Grant},
   title={Extensions and absolutes of Hausdorff spaces},
   publisher={Springer-Verlag, New York},
   date={1988},
   pages={xiv+856},
}

\bib{mr}{article}{
   author={Mioduszewski, J.},
   author={Rudolf, L.},
   title={$H$-closed and extremally disconnected Hausdorff spaces},
   journal={Dissertationes Math. (Rozprawy Mat.)},
   volume={66},
   date={1969},
   pages={55},
}

\bib{tucker}{article}{
   author={Tucker, C. T.},
   title={Riesz homomorphisms and positive linear maps},
   journal={Pacific J. Math.},
   volume={69},
   date={1977},
   number={2},
   pages={551--556},
}

\bib{vdw}{article}{
   author={van der Walt, Jan Harm},
   title={The universal completion of $C(X)$ and unbounded order
   convergence},
   journal={J. Math. Anal. Appl.},
   volume={460},
   date={2018},
   number={1},
   pages={76--97},
}

\bib{whyburn}{book}{
   author={Whyburn, Gordon Thomas},
   title={Analytic Topology},
   series={American Mathematical Society Colloquium Publications, vol. 28},
   publisher={American Mathematical Society, New York},
   date={1942},
   pages={x+278},
}

\bib{zaanen}{book}{
   author={Zaanen, A. C.},
   title={Riesz spaces. II},
   series={North-Holland Mathematical Library},
   volume={30},
   publisher={North-Holland Publishing Co., Amsterdam},
   date={1983},
   pages={xi+720},
}

\end{biblist}
\end{bibsection}
\end{document}